\documentclass[11pt,reqno,letterpaper]{amsart}
\usepackage[T1]{fontenc}
\usepackage{lmodern,color}
\usepackage{amsmath,amssymb,amsthm}
\usepackage[dvipsnames]{xcolor}
\usepackage{microtype}
\usepackage{enumitem}
\usepackage[colorlinks=true,allcolors=blue]{hyperref}
\usepackage[left=1.25in,right=1.25in,top=1.1in,bottom=1.1in]{geometry}
\usepackage{todonotes}

\theoremstyle{plain}
\newtheorem{theorem}{Theorem}[section]
\newtheorem{lemma}[theorem]{Lemma}
\newtheorem{conjecture}[theorem]{Conjecture}
\newtheorem{proposition}[theorem]{Proposition}
\newtheorem{corollary}[theorem]{Corollary}

\theoremstyle{definition}
\newtheorem{definition}[theorem]{Definition}
\newtheorem{remark}[theorem]{Remark}

\newcommand{\RR}{\mathbb{R}}
\renewcommand{\SS}{\mathbb S}
\newcommand{\calK}{\mathcal{K}}
\newcommand{\V}{\operatorname{V}}
\renewcommand{\S}{\operatorname{S}}
\newcommand{\D}{\operatorname{D}}
\newcommand{\supp}{\operatorname{supp}}
\newcommand{\relint}{\operatorname{relint}}
\newcommand{\relbd}{\operatorname{relbd}}

\newcommand{\aff}{\operatorname{aff}}
\newcommand{\codim}{\operatorname{codim}}
\newcommand{\conv}{\operatorname{conv}}

\newcommand{\proj}{\operatorname{P}}

\DeclareMathOperator{\interior}{int}
\DeclareMathOperator{\Sym}{Sym}

\newenvironment{enuma}{
	\begin{enumerate}[label=\textnormal{(\alph*)},leftmargin=*,align=left,itemindent=18pt,labelwidth=0pt,labelsep=0pt, listparindent=\parindent,parsep=\parskip,topsep=2pt,itemsep=2pt,partopsep=4pt]}
{	
	\end{enumerate}
}

 \title[Higher-order Godbersen inequality]{Higher-Order Godbersen Inequality and a characterization of simplices via the face lattice}

\author[F. Fry\v{s}]{Filip Fry\v{s}}
\address{Mathematical Institute, Faculty of Mathematics and Physics, Charles University, Prague, Czechia}
\email{filip.frys@matfyz.cuni.cz}

\author[J. Kotrbat\'y]{Jan Kotrbat\'y}
\address{Mathematical Institute, Faculty of Mathematics and Physics, Charles University, Prague, Czechia}
\email{kotrbaty@karlin.mff.cuni.cz}

\author[M. A. Mouamine]{Mohamed A. Mouamine}
\address{Mathematical Institute, Faculty of Mathematics and Physics, Charles University, Prague, Czechia}
\email{mohamed-abdeldjalil.mouamine@matfyz.cuni.cz}

\author[S. Wang]{Shouda Wang}
\address{Department of Mathematics, KTH Royal Institute of Technology}
\email{shoudawang@princeton.edu}

\begin{document}

\begin{abstract}
We settle the long-standing Godbersen conjecture that the mixed volume of a convex body of fixed positive volume with its reflection about the origin is maximized precisely by simplices. For the proof, we establish a new characterization of simplices by a natural incidence relation on the face lattice. Moreover, we establish a higher-order generalization of the Godbersen inequality and show that simplices are the only nontrivial equality cases,
confirming a conjecture of Schneider from 2000.
\end{abstract}

\maketitle

\section{Introduction}

Simplices are among the most fundamental objects in convex geometry and their characterization has been a central theme of study in the field.
One important approach of characterization is to define  the simplex as the unique extremizer of a geometric inequality. A first example is the Minkowski asymmetry measure of a full-dimensional convex body $K\subseteq\RR^n$ given by
$$
s(K):=\inf\{t>0 : -K\subseteq tK+x\text{ for some }x\in\RR^n\},
$$
where $-K$ is the reflection of $K$ about the origin. It satisfies $s(K)\leq n$ and equality holds if and only if $K$ is a simplex, see \cite[§6.1]{Grunbaum63}.
Further inequalities having simplices as their minimizers include Ball's reverse isoperimetric inequality \cite{Ball91},  Grünbaum's inequality \cite{Grunbaum63},   Zhang's projection inequality \cite{Zhang91}, the  Mahler inequality \cite{Mahler39}, and many more.

Our present work is motivated by yet another such inequality. For a convex body $K\subseteq\RR^n$ we write $\V_n(K)$ for its volume and define the difference body $K-K:=\{x-y:x,y\in K\}$. The celebrated Rogers--Shephard inequality \cite{RS57,RS58} bounds the volume of the difference body by the volume of $K$ itself:
\begin{equation}\label{eq:rogers-shephard}
 \V_n(K-K)\leq\binom{2n}{n}\V_n(K).
\end{equation}
If $K$ is full-dimensional, then equality holds in \eqref{eq:rogers-shephard} if and only if $K$ is a simplex. Let us also mention that  the Brunn--Minkowski inequality  gives a sharp lower bound on $\V_n(K-K)$, namely $\V_n(K-K)\geq2^n\V_n(K)$, with equality if and only if $K$ is centrally symmetric or lower-dimensional.

A conjecture of Godbersen \cite{G38}, which predates the proof of the Rogers--Shephard inequality, speculated  a refinement of \eqref{eq:rogers-shephard}. Let $\V_n$ denote the mixed volume, i.e., the Minkowski-multilinear, symmetric polarization of the volume. Since $K-K=K+(-K)$,  the multilinearity of mixed volume yields $\V_n(K-K)=\sum_{k=0}^n\binom{n}{k}\V_n(K[k],\allowbreak-K[n-k])$, where $K[k]$ denotes $k$ repetitions of $K$. On the other hand,  $\binom{2n}{n}=\sum_{k=0}^n\binom{n}{k}^2$. Comparing the two sums, Godbersen \cite{G38} in 1938, and independently Makai Jr. \cite{HM74}  in 1974, conjectured that \eqref{eq:rogers-shephard} in fact holds term-wise; see \eqref{eq:godbersen} below.

\emph{A priori}, this stronger family of inequalities may have many more extremizers than \eqref{eq:rogers-shephard}. In addition, it is often the case that inequalities involving mixed volumes have a large number of intricate equality cases that are not straightforward to describe, cf. \cite{SvH19,SvH20,vHW25,CP24,SZ16,MS24,vHYZ24}.
Nevertheless, it was commonly conjectured that the only nontrivial equality cases should still be simplices, see, e.g., \cite[§23.1.3]{BuragoZalgaller} and \cite[§10.1]{Sch14}. The following inequality together with equality characterization is thus commonly referred to as the \emph{Godbersen conjecture}.

\begin{conjecture}[Godbersen conjecture]
\label{con:G}
Let $0\leq k\leq n$. For any convex body $K\subseteq\RR^n$, one has
\begin{equation}\label{eq:godbersen}
 \V_n\bigl(K[k],-K[n-k]\bigr)\leq\binom{n}{k}\V_n(K).
\end{equation}
If $1\leq k\leq n-1$ and $K$ is full-dimensional, then equality holds if and only if $K$ is a simplex.
\end{conjecture}

Note that if $K$ is lower-dimensional, then both sides of \eqref{eq:godbersen} are equal to zero. If $k\in\{0,n\}$ then the mixed volume on the left-hand side of \eqref{eq:godbersen} is just the volume of $K$ and hence equality holds for any $K$. Moreover, the cases $k\in\{1,n-1\}$ follow at once from the inequality $s(K)\leq n$  for the Minkowski asymmetry measure.

Despite its long history, most of the progress on the Godbersen conjecture has only been made rather recently. This includes a weaker general bound on the left-hand side of \eqref{eq:godbersen} due to Artstein-Avidan, Einhorn, Florentin, and Ostrover \cite{AAEFO15}, a proof for anti-blocking and locally anti-blocking convex bodies by Artstein-Avidan, Sadovsky, and Sanyal \cite{AASS23,S25}, and  averaged forms of the conjecture due to Artstein-Avidan and Putterman \cite{AAP25}. Very recently, the second- and third-named authors \cite{KM26} proved the inequality \eqref{eq:godbersen} for all convex bodies and characterized equality among polytopes.

In the present paper, we completely characterize the equality cases of \eqref{eq:godbersen} for general convex bodies and thus settle the Godbersen conjecture in full generality. Furthermore, we prove a more general version of Conjecture \ref{con:G}, where $K$ and $-K$ are replaced by an array of natural embeddings of $K$ into a product space, which was conjectured by Schneider \cite{Sch00} in 2000. 
A particularly interesting by-product of the proof is  a new characterization of the simplex via the structure of its face lattice.

\subsection{Equality cases of the Godbersen inequality}

Our first main result resolves the equality cases part of Conjecture \ref{con:G}.

\begin{theorem}\label{thm1.1}
Let $K\subseteq\RR^n$ be a full-dimensional convex body and let $1\leq k\leq n-1$.
Equality in \eqref{eq:godbersen} holds if and only if $K$ is a simplex.
\end{theorem}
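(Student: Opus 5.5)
We will prove both directions. The ``if'' direction is a computation: for a simplex $\Delta$ we use the known identity $\V_n(\Delta[k],-\Delta[n-k])=\binom{n}{k}\V_n(\Delta)$. One way to get it is from the equality case of \eqref{eq:rogers-shephard}: since $\V_n(\Delta-\Delta)=\binom{2n}{n}\V_n(\Delta)$, every term of the expansion must attain its bound once the inequality \eqref{eq:godbersen} is known for all $k$ (established in \cite{KM26}). Indeed, $\sum_k\binom nk\V_n(\Delta[k],-\Delta[n-k])=\sum_k\binom nk^2\V_n(\Delta)$, and each summand on the left is at most the corresponding summand on the right, so all must be equalities.

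For the ``only if'' direction, the plan is to revisit the proof of \eqref{eq:godbersen} and extract necessary conditions from equality. We represent the mixed volume through support functions and surface area measures:
\[
\V_n(K[k],-K[n-k])=\frac1n\int h_{-K}\,d\S(K[k],-K[n-k-1],\cdot).
\]
Alternatively, we write it as the volume of a slice or fiber integral of the embedding $x\mapsto(x,-x)$ in $\RR^{2n}$, following the Rogers--Shephard style argument $\V_{2n}$ of $\{(x,y):x\in K,\ x+y\in\ldots\}$. The inequality in \cite{KM26} presumably comes from a pointwise bound on an integrand, which compares $h_{-K}$ with a combination of $h_K$ expressed via the polar-type representation $-K\subseteq nK$ (after translating, e.g.\ to the centroid). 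Equality then forces equality in this pointwise bound for $\S$-almost every direction in the support of the relevant mixed area measure. That support is the set of $(k,n-k-1)$-extreme normal directions, whose description involves faces $F(K,u)$ and $F(-K,u)=-F(K,-u)$ of complementary dimensions.

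Next we translate this almost-everywhere equality into combinatorial information. Dimensional considerations for mixed area measures suggest that for equality every direction $u$ in the support must satisfy the following: the face of $K$ in direction $u$ and the face in direction $-u$ have dimensions summing appropriately and jointly span, in the sense that $F(K,u)$ and $F(K,-u)$ are ``complementary'' and every vertex lies in one of them. This is the incidence relation on the face lattice mentioned in the abstract. We then reduce to polytopes. First we show that equality forces $K$ to have finitely many extreme points, by showing that the support of the mixed area measure would otherwise carry mass at directions where the pointwise inequality is strict; smooth or strictly curved boundary pieces give strictness. Once $K$ is a polytope, the polytope equality characterization from \cite{KM26} finishes the argument. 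Failing a direct citation, we apply the new characterization: a polytope in which every pair of opposite-normal faces of complementary dimensions partitions the vertex set is a simplex. This would be proved by induction on dimension via facets and the opposite vertices.

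The main obstacle is the non-polytopal step, namely passing from equality almost everywhere with respect to a possibly degenerate mixed area measure to a global structural conclusion. For a general convex body, the support of $\S(K[k],-K[n-k-1],\cdot)$ can be hard to control. Our first attempt will be an approximation-free argument using the face-lattice characterization directly on general convex bodies. We will establish that the incidence relation holds for all normal directions, with closedness of the support giving this from almost every direction. We will then show that a convex body whose exposed faces satisfy this relation has only $n+1$ extreme points.
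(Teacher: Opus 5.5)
Your ``if'' direction is correct and shorter than the paper's. For a simplex $K$, $\V_n(K-K)=\sum_k\binom nk\V_n(K[k],-K[n-k])$, each summand is at most $\binom nk^2\V_n(K)$ by \eqref{eq:godbersen}, and $\sum_k\binom nk^2=\binom{2n}{n}$. So the Rogers--Shephard equality case forces termwise equality. The paper instead obtains this direction from Theorem~\ref{thm:equivalent}~(a)$\Rightarrow$(c) together with Proposition~\ref{prop1.3}.

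The ``only if'' direction, however, has a genuine gap: it is a sequence of hopes, and its first ingredient is wrong. The sharp constant $\binom nk$ does not come from a pointwise comparison in $\RR^n$. Insert $h_{-K}\le nh_K$ (centroid at the origin) into $\frac1n\int h_{-K}\,d\S_{K[k],-K[n-k-1]}$, iterate, and use the reflection symmetry of mixed volumes. This yields only $\V_n(K[k],-K[n-k])\le n^{\min(k,n-k)}\V_n(K)$, which equals $\binom nk\V_n(K)$ only for $k\in\{1,n-1\}$; for $n=4$, $k=2$ it gives $16\V_4(K)$ instead of $6\V_4(K)$. So for $2\le k\le n-2$ there is no pointwise equality whose analysis could characterize equality in \eqref{eq:godbersen}. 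The missing idea is the lift to $\RR^{2n}$. There, \eqref{eq:godbersen} is the monotonicity inequality for $\Delta K\subseteq K\times K$ tested against $\iota_1K[k],\iota_2K[n-k]$. By Lemma~\ref{lem2.11}, equality is equivalent to $h_K(u+v)=h_{\Delta K}(u,v)=h_{K^2}(u,v)=h_K(u)+h_K(v)$ for all $(u,v)\in\supp\S_{B[n-1],\iota_1K[k],\iota_2K[n-k]}$.

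The remaining steps also fail as stated. First, you take the support of the relevant mixed area measure to be the closure of the extreme directions; that is Schneider's conjecture, which is open in general. The paper needs only that every $(u,v)$ with $\dim T(K,u)\le n-k$ and $\dim T(K,v)\le k$ lies in the support above. It proves this via the local projection formula (Proposition~\ref{prop:local_proj}) combined with Corollary~\ref{cor2.9}.

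Second, the resulting condition is not that $F(K,u)$ and $F(K,-u)$ partition the vertices. It pairs two independent directions: any two touching cones of dimensions at most $k$ and $n-k$ must lie in a common touching cone (Proposition~\ref{prop1.3}). By polarity, this is condition (b) of Theorem~\ref{thm:equivalent} for $K^\circ$.

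Third, your reduction to polytopes has no argument behind it. ``Strictly curved pieces give strictness'' says nothing about, e.g., a body whose countably many extreme points accumulate at a single point. Controlling the support of a possibly degenerate mixed area measure on a general body is exactly the difficulty. The paper never reduces to polytopes. Its new combinatorial input is that condition (b) characterizes simplices among all convex bodies, whose face lattices may be infinite. This is proved by the induction $(I_{k-1,n-k-1})\Rightarrow(I_{k-2,n-k})$ (Lemmas~\ref{lem:T}--\ref{lem:without_subface}, Proposition~\ref{prop4.8}) down to the base case, Lemma~\ref{lem:induction_base}.
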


In order to outline the strategy of our proof of Theorem \ref{thm1.1}, let us briefly recall the proof of \eqref{eq:godbersen} given in \cite{KM26}.
The proof repeatedly uses the following monotonicity property for mixed volumes: for convex bodies $L,M,C_1,\dots, C_{n-1}\subseteq\RR^n$ with $L\subseteq M$, one has
\begin{equation}
\label{eq:monotonicity}
\V_n(L,C_1,\dots, C_{n-1})\leq \V_n(M,C_1,\dots, C_{n-1}).
\end{equation}
Write $\iota_1,\iota_2:\RR^n\rightarrow (\RR^n)^2$ for the canonical embeddings into the first and second factors, respectively.
The body $K^2=K\times K\subseteq (\RR^n)^2$ can then be written as $K^2=\iota_1K+\iota_2K$.
Denote by $\Delta: \RR^n \rightarrow (\RR^n)^2$ the diagonal embedding given by $\Delta(x)=(x,x)$, $x\in\RR^n$. 
Then for any convex body $K\subseteq\RR^n$ we have $\Delta K\subseteq K^2$ and hence
\begin{align}
 \label{eq4} 
\begin{split}
    \V_n(K) \V_n  \bigl( K[k],-K[n-k]\bigr)&=\binom{2n}{n} \V_{2n}\bigl(\Delta K[n],\iota_1K[k],\iota_2K[n-k]\bigr) \\
   & \leq  \binom{2n}{n} \V_{2n}\bigl(K^2[n],\iota_1K[k],\iota_2K[n-k]\bigr)\\
&= \binom{n}{k}\V_n(K)^2,
   \end{split}
\end{align}
where the inequality uses \eqref{eq:monotonicity} $n$ times and the equalities follow from the standard projection formula for mixed volumes (see \cite{KM26} and Section \ref{sec3} below for more details). If $K$ is full-dimensional, dividing by $\V_n(K)>0$ proves \eqref{eq:godbersen}. In addition, equality in \eqref{eq:godbersen} holds if and only if the monotonicity inequalities used in \eqref{eq4} all hold with equality.

It is well known that equality in the monotonicity inequality \eqref{eq:monotonicity} can be reformulated as a condition in terms of the support of mixed area measures, see Lemma \ref{lem2.11}. In our case, this implies that $K$ is an extremizer of \eqref{eq4} if and only if the support functions of $\Delta K$ and $K^2$ are equal on the support of certain mixed area measures, more precisely,
\begin{equation}
h_{\Delta K} = h_{K^2} \quad \text{on}\quad \supp \S_{B[n-1], \iota_1 K[k], \iota_2 K[n-k]},
\label{eq5}
\end{equation}
see Section \ref{sec2} for notation and more details. 

While an explicit characterization of the support of the mixed area measure is a wide open problem in general, it turns out that in this special case, equality in the monotonicity inequality admits a simple geometric interpretation. More precisely, 
we prove a local version of the aforementioned projection formula, with mixed volumes replaced by mixed area measures (see Proposition \ref{prop:local_proj}), which allows us to reformulate \eqref{eq5} as a condition on the touching cones of $K$ (see Section \ref{sss:normal} for definition and more details). These constitute the first step of the proof of Theorem \ref{thm1.1} and result in the following proposition.

\begin{proposition}\label{prop1.3}
Let $1\leq k\leq n-1$. Assume $K\subseteq\RR^n$ is a full-dimensional convex body.
Then $K$  attains equality in \eqref{eq:godbersen} if and only if for any two directions $u,v\in \RR^n$ with $\dim T(K,u)\leq k$ and $\dim T(K,v)\leq n-k$, there exists a direction $w\in \RR^n$ such that $T(K,w)$ contains both $T(K,u)$ and $T(K,v)$. 
\end{proposition}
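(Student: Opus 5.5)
Starting from \eqref{eq4}, equality in \eqref{eq:godbersen} is equivalent to equality in each of the $n$ monotonicity steps replacing $\Delta K$ by $K^2$. By Lemma \ref{lem2.11} this reduces to the support condition \eqref{eq5}: $h_{\Delta K}=h_{K^2}$ on $\supp \S_{B[n-1],\iota_1K[k],\iota_2K[n-k]}$. The reduction is legitimate because $\S$ depends continuously and multilinearly on its arguments. So one can pass from the chain of $n$ replacements to a single condition. Here the $n$ copies of $\Delta K$ are replaced one at a time, and the mixed volume of the intermediate chain is squeezed between equal quantities. Positivity of all intermediate terms then forces the support condition with the smoothest reference body, i.e.\ $B$ in the remaining slots. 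Concretely, $\Delta K\subseteq K^2$ always, and equality in all steps is equivalent to equality in the one step with $B[n-1]$ filling the other $\Delta K$ slots; this is a standard argument using that adding $B$ enlarges supports.

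Next, I compute both sides of \eqref{eq5} at a direction $(x,y)\in(\RR^n)^2$. We have $h_{K^2}(x,y)=h_K(x)+h_K(y)$ and $h_{\Delta K}(x,y)=h_K(x+y)$. Equality $h_K(x+y)=h_K(x)+h_K(y)$ holds iff $F(K,x)\cap F(K,y)\neq\emptyset$, i.e.\ some point of $K$ maximizes both $x$ and $y$. The heart of the matter is describing $\supp \S_{B[n-1],\iota_1K[k],\iota_2K[n-k]}$. For this I use the local projection formula, Proposition \ref{prop:local_proj}: a mixed area measure in $(\RR^n)^2$ of bodies that split as $\iota_1K$ and $\iota_2K$ should decompose as a product-type measure. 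Its pieces are mixed area measures of $K$ in $\RR^n$ with $k$ and $n-k$ copies, respectively, mixed with the ball. The support of $\S_{K[j],B[n-1-j]}$ is known (Schneider) to be the closure of the set of $(n-1-j)$-extreme normal vectors. These are the directions $u$ with $\dim T(K,u)\le j+1$ in the appropriate normalization, i.e.\ the touching cone of small dimension. I then expect the support to be the closure of the set of pairs $(x,y)$ with $\dim T(K,x)\le k$ and $\dim T(K,y)\le n-k$, up to careful index bookkeeping and up to taking closures.

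Given this, \eqref{eq5} says: for all such $u,v$ (and limits), $F(K,u)\cap F(K,v)\ne\emptyset$. Translating into touching cones, a common point $p$ of the two faces gives the normal cone $N(K,p)\ni u,v$. I then choose $w$ in the relative interior of the smallest face of $N(K,p)$ containing $u$ and $v$, so that $T(K,w)\supseteq T(K,u),T(K,v)$. Conversely, $T(K,w)\supseteq T(K,u),T(K,v)$ gives $F(K,w)\subseteq F(K,u)\cap F(K,v)$, which is nonempty. Closure issues are harmless: the set of pairs satisfying $h_K(x+y)=h_K(x)+h_K(y)$ is closed, so the condition on a dense set of the support extends to the whole support.

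The main obstacle is the identification of the support of the mixed area measure in the product space. This needs Proposition \ref{prop:local_proj} in a form precise enough to express the support as a product, or join, of the supports of $\S_{K[k-1],B[\cdot]}$-type measures on the two factors, with correct dimension indices. I would first verify the product structure on polytopes, where the measures are explicit sums over faces, and then pass to general bodies by approximation. Approximation requires care, since supports are only lower semicontinuous. To handle this I would rely on the touching-cone description of extreme normal vectors rather than on limits.
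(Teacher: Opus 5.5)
Your outer steps are correct and match the paper. Lemma~\ref{lem2.11}, applied with $\V_{2n}(K^2[n],\iota_1K[k],\iota_2K[n-k])>0$, reduces equality to \eqref{eq5}. Your translation between $h_K(x+y)=h_K(x)+h_K(y)$, a common maximizer, and a touching cone $T(K,w)\supseteq T(K,x),T(K,y)$ is also right.

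The gap is the step you call the heart of the matter: you only \emph{expect} $\supp\S_{B[n-1],\iota_1K[k],\iota_2K[n-k]}$ to be the closure of the admissible pairs, and the route you sketch cannot prove it. The description is true, but it amounts to Schneider's conjecture for this tuple. Its inclusion ``extreme directions $\subseteq$ support'' is not known in general, and that inclusion is exactly what ``only if'' needs. The specific problems are:
\begin{enuma}
\item Proposition~\ref{prop:local_proj} does not apply to this measure. It needs each body to lie in a subspace $E_i$, all $E_i$ sharing a common direction, with $\dim E_i-1$ copies each and $\sum_i(\dim E_i-1)$ equal to the ambient dimension minus one. The full-dimensional ball $B\subseteq\RR^{2n}$ cannot enter, so there is no product or join splitting into measures $\S_{K[j],B[\cdot]}$ on the factors.
\item Forcing a splitting by replacing $B$ with $\iota_1B'+\iota_2B'$ ($B'$ the unit ball of $\RR^n$) does not help. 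It yields a measure concentrated on directions $(x,0)$ and $(0,y)$, where \eqref{eq5} holds trivially.
\item Polytope approximation fails in the direction you need. Weak convergence only says that points of the limit support are limits of points of the approximating supports. It never puts a prescribed direction into the limit support.
\item For ``if'' you also need the reverse inclusion, support $\subseteq$ closure of extreme directions, which your heuristic does not supply.
\end{enuma}
A minor point: the factor with $k$ copies of $K$ forces $\dim T(K,x)\le n-k$, not $\le k$. This is harmless by symmetry.

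The missing idea is to pass to a well-chosen $(n+1)$-dimensional subspace and let Corollary~\ref{cor2.9} absorb the ball.
\begin{enuma}
\item Take nonzero $x,y$ with $\dim T(K,x)\le n-k$ and $\dim T(K,y)\le k$, and normalize $\bar u:=(x,y)$.
\item Pick $E_1\subseteq T(K,x)^\perp$ with $\dim E_1=k$ and $E_2\subseteq T(K,y)^\perp$ with $\dim E_2=n-k$. Set $E_i':=\iota_iE_i\oplus\RR\bar u$ and $E:=\iota_1E_1\oplus\iota_2E_2\oplus\RR\bar u$.
\item By \eqref{eq7} and the choice of $E_i$, $T_{E_i'}(\proj_{E_i'}\iota_iK,\bar u)=\RR_{\geq0}\bar u$. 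So $\bar u$ is $0$-extreme for each projection and lies in the support of its surface area measure \cite[Theorem~4.5.3]{Sch14}.
\item Proposition~\ref{prop:local_proj} \emph{does} apply inside $E$, with common direction $\bar u$. It gives $\bar u\in\supp\S^E_{\proj_E\iota_1K[k],\proj_E\iota_2K[n-k]}$.
\item Corollary~\ref{cor2.9} (i.e.\ \cite[Theorem~1.21]{vHW25}) lifts this to $\bar u\in\supp\S_{B[n-1],\iota_1K[k],\iota_2K[n-k]}$.
\end{enuma}
For ``if'', the paper never computes the support. It cites the general inclusion of the support in the closure of the extreme directions \cite[Corollary~1.9]{vHW25}. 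It then checks that $(B[n-1],\iota_1K[k],\iota_2K[n-k])$-extremality of $(x,y)$ gives the two dimension bounds, after which your translation argument and continuity finish the proof.
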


    Now assume that $K$ contains the origin in its interior. The fact that touching cones and faces are dual notions under  the operation of polarity (see Section \ref{sec2}) then allows us to translate the conclusion in Proposition \ref{prop1.3} to a condition on the face lattice of the polar body $K^\circ$. This states that for any two faces $F$ and $G$ of $K^\circ$ with $\dim F\leq k-1$ and $\dim G\leq n-k-1$, there exists a proper face of $K^\circ$ containing both of them. The second step in our classification of equality cases in \eqref{eq:godbersen} is that this face lattice condition already characterizes simplices.

\begin{theorem}\label{thm:equivalent}
Let $K\subseteq\RR^n$ be a full-dimensional convex body and fix an integer $1\leq k\leq n-1$.
The following are equivalent:
\begin{enuma}
\item  $K$ is a simplex.
\item For any two faces $F$ and $G$ of $K$ with  $\dim F\leq k-1$ and $\dim G\leq n-k-1$, there exists a proper face $H$ of $K$ containing both $F$ and $G$.
\item  For any two directions $u,v\in \RR^n$ with $\dim T(K,u)\leq k$ and $\dim T(K,v)\leq n-k$, there exists a direction $w\in \RR^n$ such that $T(K,w)$ contains both $T(K,u)$ and $T(K,v)$. 
\end{enuma}
\end{theorem}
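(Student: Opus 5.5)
We prove (a)$\Rightarrow$(b), (a)$\Rightarrow$(c), (b)$\Rightarrow$(a) and (c)$\Rightarrow$(a).

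\emph{Simplices satisfy (b) and (c).} If $K=\conv\{p_0,\dots,p_n\}$, every face is $\conv$ of a vertex subset. A face $F$ with $\dim F\le k-1$ uses at most $k$ vertices, and a face $G$ with $\dim G\le n-k-1$ uses at most $n-k$ vertices. Together they use at most $n$ of the $n+1$ vertices. The convex hull of these at most $n$ vertices is a proper face $H$ containing both, which gives (b).

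For (c), the touching cones of a simplex are the normal cones of its faces together with $\{0\}$. The normal cone $N(K,F)$ has dimension $n-\dim F$ and is generated by the outer normals of the facets containing $F$. A touching cone $T(K,u)$ of dimension at most $k$ is therefore the normal cone of a face with at least $n-k+1$ vertices. Equivalently, it is spanned by facet normals indexed by a set $I$ of omitted vertices with $|I|\le k$. Similarly $T(K,v)$ corresponds to a set $J$ with $|J|\le n-k$. Then $|I\cup J|\le n$, so some vertex $p_j$ lies outside $I\cup J$. The vertex set complementary to $I\cup J$ is nonempty and spans a face, and its normal cone contains both cones. Choosing $w$ in the relative interior of that normal cone gives (c). Alternatively, when $0\in\interior K$, (c) for $K$ is exactly (b) for $K^\circ$ by the polarity duality between touching cones and faces. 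Since the polar of a simplex is a simplex, this duality also reduces (c)$\Rightarrow$(a) to (b)$\Rightarrow$(a) after a translation.

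\emph{Main step: (b)$\Rightarrow$(a).} We argue by induction on $n$, with $n=1$ trivial. First take $F,G$ to be points and apply (b) with the minimal admissible dimensions: any two extreme points $x,y$ (dimension $0\le k-1$ and $0\le n-k-1$ when $k\le n-1$) lie in a common proper face. In particular, no segment between extreme points meets $\interior K$.

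Next pass to a facet-type face and use induction. Fix a face $F$ with $\dim F=k-1$; such faces exist, since for instance they are faces of exposed faces of maximal dimension, although general convex bodies need care here. Apply (b) to $F$ together with every extreme point, and more generally every face of dimension $\le n-k-1$. Each resulting $H$ lies in a maximal proper face, and maximal proper faces are exposed. The goal is to show that the union of the exposed maximal faces containing $F$, together with the extreme points, forces $K$ to be the convex hull of $F$ and a face of dimension $n-k$ in complementary position, namely a join. Applying induction to the lower-dimensional faces, which inherit condition (b) with adjusted indices, then gives that $K$ is a join of simplices and hence a simplex.

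Concretely, I would first prove that $K$ is a polytope. Suppose $K$ had infinitely many extreme points. Take $n-k$ affinely independent extreme points spanning a face, or a small face $G$ near a smooth-like boundary point, and pair it with $k$-dimensional-type faces on the opposite side. Then the common face containing both would contain a segment crossing the interior, which is a contradiction. Once $K$ is a polytope, (b) says that every $k$ vertices of a $(k-1)$-face and every $n-k$ vertices of a face lie in a facet. A counting argument on a vertex $v$ and its vertex figure, which inherits (b) with $k$ replaced by $k-1$, then shows that every vertex figure is a simplex by induction. So $K$ is simple. Finally, apply (b) to a $(k-1)$-face and an $(n-k-1)$-face with disjoint vertex sets spanning the whole space near a vertex, to force exactly $n+1$ vertices.

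I expect the main obstacle to be the reduction to polytopes for general convex bodies, where faces of the required dimensions may fail to exist or be exposed. My first attempt there is to exploit that every boundary point lies in the relative interior of a unique face, and to use dimension-$0$ and low-dimensional faces, which always exist as extreme points and their convex hulls within faces, to derive a contradiction with strict convexity along some boundary arc.
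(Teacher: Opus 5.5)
\textbf{The gap is the main step (b)$\Rightarrow$(a): for general convex bodies your proposal contains no argument.} Your easy directions are fine and agree with the paper. The vertex count gives (a)$\Rightarrow$(b), and the normal-cone description gives (a)$\Rightarrow$(c). Translating so that $0\in\interior K$, then using the face/touching-cone duality and self-duality of simplices, reduces (c)$\Rightarrow$(a) to (b)$\Rightarrow$(a).

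The missing piece is the passage from general convex bodies to polytopes, which is the whole difficulty of the theorem.
\begin{itemize}
\item The proposed contradiction (``the common face would contain a segment crossing the interior'') is circular. It needs faces $F,G$ with $\dim F\le k-1$, $\dim G\le n-k-1$ and $\conv(F,G)\cap\interior K\neq\emptyset$, i.e.\ $F\vee G=K$. Exhibiting such a pair for a non-simplex is exactly the negation of (b), which is what you are trying to prove.
\item The only consequence of (b) you actually extract is that any two extreme points lie in a common proper face. This is far too weak: a circular cone in $\RR^3$ has this property and is not a polytope.
\item Faces of dimension exactly $k-1$ need not exist; a half-ball in $\RR^3$ has no one-dimensional faces.
\end{itemize}

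Even for polytopes the outline is incomplete. Translated back to $K$, (b) for the vertex figure at $v$ (with $k-1$) asks that $F\vee G$ be proper whenever $v\in F\cap G$, $\dim F\le k-1$, $\dim G\le n-k$. That is one dimension more than (b) grants. The claim is true, but it has to be argued: pick $z\in\relbd G$ with $(v,z)\cap\relint G\neq\emptyset$; then the proper face $F\vee F(K,z)$ contains $G$ by Lemma~\ref{lemma2.1}. The final passage from ``simple'' to ``$n+1$ vertices'' is not given at all.

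The paper never reduces to polytopes; finiteness of the set of extreme points is an output, not an input.
\begin{itemize}
\item It shifts dimension from one face to the other, proving $(I_{k-1,n-k-1})\Rightarrow(I_{k-2,n-k})$ for arbitrary bodies (Proposition~\ref{prop4.8}), until it reaches $(I_{0,n-2})$.
\item The key tool is Lemma~\ref{lem:2cases}. Suppose $F'\vee G$ and $F\vee G'$ are proper for all proper faces $F'$ of $F$ and $G'$ of $G$. Then either $F\vee G$ is proper, or $F,G$ are affinely independent with $K\cap\aff(F,G)=\conv(F,G)$. This is proved by lifting $\conv(F,G)$ to a join in $\RR^{2n+1}$ whose relative boundary maps into $\partial K$.
\item Under $(I_{0,n-2})$, take any extreme point $x$ and let the ray from $x$ through an interior point exit at $b$. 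Then $G=F(K,b)$ must be $(n-1)$-dimensional, and Lemma~\ref{lem:2cases} gives $K=\conv(x,G)$.
\item Hence every extreme point is affinely independent of the others, there are at most $n+1$ of them, and $K$ is a simplex.
\end{itemize}
An argument of this kind, working directly with a possibly infinite face lattice, is what your proposal is missing.
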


We remark that Theorem \ref{thm:equivalent}  establishes a characterization of simplices entirely in terms of their facial or normal structure.  The only  previous result in this direction known to us is due to  Develin \cite[Theorem 1]{Develin04}, who proved  Theorem \ref{thm:equivalent} when $K$ is a polytope. The core difficulty of Theorem \ref{thm:equivalent}, however, is that the face lattice of a general convex body is possibly  infinite. Yet, our proof strategy is motivated by Develin's argument; more precisely, we inductively reduce the statement to the case $k=1$ where a simpler direct proof is available.

Theorem \ref{thm1.1} now follows at once by combining Theorem \ref{thm:equivalent} with Proposition \ref{prop1.3}.

\subsection{Higher-order Godbersen's inequality and equality cases}

The key insight of the proof of \eqref{eq:godbersen} given in \cite{KM26} and outlined above is the embedding of the problem to the product space. This idea originated in the previous study of a certain natural generalization of Conjecture \ref{con:G} by the first- and second-named authors \cite{K25,FK26} as we now explain.

For $p\geq1$, let $\Delta_p:\RR^n\to(\RR^n)^p$ be the diagonal map $\Delta_p(x)=(x,\ldots,x)$ and let $\iota_i:\RR^n\to(\RR^n)^p$ be the embedding of $\RR^n$ into the $i$-th copy of $\RR^n$ in $(\RR^n)^p$. Then the $p$-th Cartesian product of $K\subseteq\RR^n$ equals $K^p=\iota_1K+\cdots+\iota_pK$.
We denote by $\V_{pn}$ the mixed volume in $\RR^{pn}$.
Schneider \cite[Satz 2]{S70} proved the following higher-order version of  the Rogers--Shephard inequality:
\begin{equation}\label{eq:schneider}
    \V_{pn}\bigl(K^p-\Delta_pK\bigr)\leq \binom{pn+n}{n}\V_n(K)^p,
\end{equation} 
 where equality holds  if and only if $K$ is lower-dimensional or a simplex.

Using the multilinearity of mixed volume on the left-hand side of \eqref{eq:schneider} and a suitable Vandermonde expansion of ${pn+n\choose n}$ on the right, one may again expect that the inequality \eqref{eq:schneider} holds term-wise, with simplices being the only nontrivial extremizers. This higher-order Godbersen inequality was first conjectured by Schneider \cite[p.~537]{Sch00} and later independently by the second-named author \cite[Conjecture 1.3]{K25}. Previous results on this conjecture were essentially limited to the generalization of the known cases of the original Godbersen conjecture to the higher-order setting, with partial contributions by Janson (see \cite{Sch00}), the second-named author \cite{K25}, and the first- and second-named authors \cite{FK26}.

Our second main result resolves the higher-order Godbersen conjecture in full generality.

\begin{theorem}\label{thm:higherGodbersen}
Let $p\geq1$ and $0\leq k_0,\dots, k_p\leq n$ be integers such that $\sum_{i=0}^pk_i=n$. For any convex body $K\subseteq\RR^n$ we have
\begin{equation}\label{eq:higher-godbersen}
 \V_{pn}\bigl(-\Delta_p K[n-k_0],\iota_1K[n-k_1],\ldots,
                    \iota_pK[n-k_p]\bigr)
 \leq\frac{(n!)^p}{(pn)!}\binom{n}{k_0,\ldots,k_p}\V_n(K)^p.
\end{equation}
If $k_i<n$ for all $i$ and $K$ is full-dimensional, then equality holds if and only if $K$ is a simplex.
\end{theorem}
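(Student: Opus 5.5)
We mimic the chain \eqref{eq4} one dimension up, working in $(\RR^n)^{p+1}$ with embeddings $\iota_0,\iota_1,\dots,\iota_p$ and the diagonal $\Delta_{p+1}$. Write $\V_N$ with $N=(p+1)n$.

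\textbf{Lift to $(p+1)n$ dimensions.} We first apply the projection formula for mixed volumes along the quotient map $\pi:(\RR^n)^{p+1}\to(\RR^n)^p$, $\pi(x_0,\dots,x_p)=(x_1-x_0,\dots,x_p-x_0)$. Its kernel is $\Delta_{p+1}\RR^n$, and it maps $\iota_0K\mapsto-\Delta_pK$ and $\iota_iK\mapsto\iota_iK$ for $i\geq1$. The projection formula then gives
$$
\V_n(K)\,\V_{pn}\bigl(-\Delta_pK[n-k_0],\iota_1K[n-k_1],\dots,\iota_pK[n-k_p]\bigr)=c\,\V_N\bigl(\Delta_{p+1}K[n],\iota_0K[n-k_0],\dots,\iota_pK[n-k_p]\bigr),
$$
where $c=\binom{N}{n}$, up to a determinant normalization that we fix by testing on a cube. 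Since $\Delta_{p+1}K\subseteq K^{p+1}$, we apply the monotonicity \eqref{eq:monotonicity} $n$ times to replace $\Delta_{p+1}K[n]$ by $K^{p+1}[n]$. Next we expand $K^{p+1}=\sum_i\iota_iK$ multilinearly. A term with $m_i$ copies of $\iota_iK$ survives only when $m_i+(n-k_i)=n$ in each factor, i.e.\ $m_i=k_i$. This leaves $\binom{n}{k_0,\dots,k_p}\V_N(\iota_0K[n],\dots,\iota_pK[n])$, which equals $\binom{n}{k_0,\dots,k_p}\frac{(n!)^{p+1}}{N!}\V_n(K)^{p+1}$. Dividing by $\V_n(K)$ gives \eqref{eq:higher-godbersen}; the lower-dimensional case is trivial, since both sides vanish. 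The constant should come out exactly as stated, and we will verify the bookkeeping against the case $p=1$, i.e.\ \eqref{eq4}.

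\textbf{Equality.} Equality holds if and only if every monotonicity step is an equality. By Lemma \ref{lem2.11}, together with the fact that adding intermediate $K^{p+1}$ factors does not shrink supports, this reduces to
$$
h_{\Delta_{p+1}K}=h_{K^{p+1}}\quad\text{on}\quad\supp\S_{B[n-1],\iota_0K[n-k_0],\dots,\iota_pK[n-k_p]}.
$$
Using the local projection formula, Proposition \ref{prop:local_proj}, we describe this support as in Proposition \ref{prop1.3}. It consists of normals $(u_0,\dots,u_p)$ such that the touching cones satisfy $\dim T(K,u_i)\leq k_i$ in an appropriate sense; the natural analogue is $\sum$ over the touching cone dimensions being compatible with the degrees $k_i$. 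The equality condition $h_{\Delta K}(u)=\sum_i h_K(u_i)$ says that some point of $K$ simultaneously supports every $u_i$. Equivalently, $\sum_i u_i$ lies in a touching cone containing all the $T(K,u_i)$.

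So the condition becomes: for any directions $u_0,\dots,u_p$ with $\dim T(K,u_i)\leq k_i$, some $w$ has $T(K,w)\supseteq T(K,u_i)$ for all $i$. For simplices this holds trivially, since any collection of normal cones of faces whose dimensions sum to at most $n$ lies in a common proper touching cone. For the converse, we reduce to Theorem \ref{thm:equivalent}. Since $k_i<n$ for all $i$ and $\sum k_i=n$, at least two of the $k_i$ are positive. Let $k:=k_0\geq1$, after relabeling, and merge the rest. We take $u_i=0$, or directions with trivial cones, for all but two indices $i,j$ with $k_i,k_j\geq1$. Then we set $k=k_i$ and use $k_j\leq n-k$. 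This gives condition (c) of Theorem \ref{thm:equivalent} with $\dim T(K,v)\leq k_j$ only, which is weaker than $\leq n-k$.

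This mismatch is the main obstacle. We must either show that the support condition allows all $T(K,v)$ with $\dim\leq n-k_i$, by grouping the other indices $j\neq i$ together, or adapt the inductive proof of Theorem \ref{thm:equivalent} to pairs $(k_i,k_j)$ with $k_i+k_j\leq n$. We will try the first route. For the grouping, we choose the directions $u_j$ for $j\neq i$ all equal to a single $v$; the support description should permit this when $\dim T(K,v)\leq\sum_{j\neq i}k_j=n-k_i$. Establishing that precise description of the mixed area measure support, via Proposition \ref{prop:local_proj} applied to products, is the technical heart of the argument.
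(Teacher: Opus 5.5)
\textbf{The gap is in the last step: reducing the $(p+1)$-direction condition to Theorem~\ref{thm:equivalent}.} Your inequality argument is the paper's. Your $\pi$ is $\mathrm{pr}_2\circ\Phi^{-1}$ for the unimodular shear $\Phi(x_0,\dots,x_p)=(x_0,x_1+x_0,\dots,x_p+x_0)$, which gives the constant $\binom{N}{n}$ exactly without any cube test. Your reduction of equality to the touching-cone condition of Proposition~\ref{prop1.6} also follows the paper's route. The support description you need there is the per-index one: directions with $\dim T(K,u_i)\le k_i$ for every $i$, not a condition on sums of dimensions.

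The genuine gap is deducing that $K$ is a simplex from this $(p+1)$-direction condition, and both routes you propose fail.

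\emph{First route.} It rests on the claim that $(u_i,v,\dots,v)$ lies in $\supp\S_{B[n-1],\widetilde\iota_0K[n-k_0],\dots,\widetilde\iota_pK[n-k_p]}$ whenever $\dim T(K,v)\le n-k_i$. This is false. Take a polytope $K\subseteq\RR^3$ with $p=2$ and $k_0=k_1=k_2=1$.
\begin{itemize}
\item The measure $\S_{B[2],\widetilde\iota_0K[2],\widetilde\iota_1K[2],\widetilde\iota_2K[2]}$ is carried by the normal cones of those $6$-faces $G_0\times G_1\times G_2$ of $K^3$ with $\V_6(\widetilde\iota_0G_0[2],\widetilde\iota_1G_1[2],\widetilde\iota_2G_2[2])>0$.
\item That positivity means every $G_j$ is a facet of $K$. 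So the support is exactly the set of unit vectors each of whose components is $0$ or a positive multiple of a facet normal.
\item Hence $(u_0,v,v)$ with $v$ an edge normal, so $\dim T(K,v)=2$, is never in the support.
\end{itemize}

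\emph{Second route.} A two-index version of Theorem~\ref{thm:equivalent} with $k_i+k_j<n$ is also false. The square pyramid $P\subseteq\RR^3$ has $(I_{0,0})$, since any two vertices share an edge or the base. So $P^\circ$ satisfies the two-direction condition with $k_i=k_j=1$ without being a simplex. Setting all but two $u_l$ to zero, or repeating one direction, discards exactly the information that forces a simplex.

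\textbf{What is missing is a combinatorial merging step on the face lattice.} The paper proves (Proposition~\ref{pro:reduction_higher_godbersen}) that every $d$-dimensional convex body is the join $F\vee G$ of two proper faces with $\dim F\le f-1$ and $\dim G\le g-1$ whenever $f+g\ge d+1$. For non-polytopal bodies this needs a real argument; the paper gives it by a descent on $\dim G$ driven by Lemma~\ref{lem:2cases}.

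This is applied to a face $F_{p-1}$ of $K^\circ$ with $\dim F_{p-1}\le k_{p-1}+k_p-1$ (dually, a touching cone of $K$ of dimension at most $k_{p-1}+k_p$). It writes $F_{p-1}$ as the join of two subfaces of dimensions at most $k_{p-1}-1$ and $k_p-1$, to which the $(p+1)$-face condition applies. This is Lemma~\ref{lem:p_reduction}: $(I_{q_0,\dots,q_p})$ implies $(I_{q_0,\dots,q_{p-2},q_{p-1}+q_p+1})$. Iterating yields $(I_{k_0-1,n-k_0-1})$ for $K^\circ$, which is condition (b) of Theorem~\ref{thm:equivalent}, and only then does the two-face characterization finish the proof.

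So the merged touching cone must be realized as the join of two genuinely different, smaller touching cones. It cannot be obtained as a single direction repeated, nor by proving that such a repeated direction lies in the support of the mixed area measure.
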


\begin{remark}\label{rem:reduction}
If $k_i=n$ for some $i$, then both sides of \eqref{eq:higher-godbersen} are equal to $\frac{(n!)^p}{(pn)!}\V_n(K)^p$,
see \cite[Remark 3.5 (b)]{K25},
hence equality is attained for any $K$. Similarly, if $K$ is lower-dimensional, then both sides of \eqref{eq:higher-godbersen} are always equal to zero.

Furthermore, by \cite[Lemma 3.1]{FK26}, zero indices can be eliminated without changing whether equality holds. Thus, apart from the trivial cases above, it suffices to consider $k_0, \dots, k_p\geq1$, which in particular implies $p\leq n-1$.
\end{remark}

Note that for $p=1$, Theorem \ref{thm:higherGodbersen} subsumes the Godbersen inequality \eqref{eq:godbersen} and its equality characterization Theorem \ref{thm1.1}. It was pointed out in \cite{KM26} that the proof of \eqref{eq:higher-godbersen} follows from the same monotonicity argument as in the case $p=1$. We give the proof of the general inequality \eqref{eq:higher-godbersen} in Section \ref{sec3}.

The proof of the equality cases of \eqref{eq:higher-godbersen} follows a similar strategy as Theorem \ref{thm1.1}. First, we show the following analogue of Proposition \ref{prop1.3}.

\begin{proposition}\label{prop1.6}
Let $1\leq p\leq n-1$ and fix integers $k_0,\dots, k_p\geq 1$ with $\sum_{i=0}^pk_i=n$. Assume $K\subseteq\RR^n$ is a full-dimensional convex body. Then $K$ attains equality in \eqref{eq:higher-godbersen} if and only if  for any $p+1$ directions $u_0,\dots, u_{p}\in \RR^n$ with $\dim T(K,u_i)\leq k_i$, there exists a direction $w\in \RR^n$ such that $T(K,u_i)\subseteq T(K,w)$ for all $i=0,\dots,p$. 
\end{proposition}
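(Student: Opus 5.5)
We mirror the proof of Proposition \ref{prop1.3}, working in $(\RR^n)^p$ and using the monotonicity argument behind \eqref{eq:higher-godbersen}.

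\textbf{Step 1: the chain of inequalities.} We write $\V_{pn}$ in $(\RR^n)^{p+1}\cong\RR^{(p+1)n}$ via the embedding $x\mapsto(x,\dots,x)$. More concretely, we use the projection formula for mixed volumes, exactly as in \eqref{eq4}, to express $\V_n(K)$ times the left-hand side of \eqref{eq:higher-godbersen} as a constant multiple of
\[
\V_{(p+1)n}\bigl(\Delta_{p+1}K[n],\iota_0K[n-k_0],\iota_1K[n-k_1],\dots,\iota_pK[n-k_p]\bigr).
\]
Here the quotient of $\RR^{(p+1)n}$ by the diagonal direction corresponds to the map $(x_0,\dots,x_p)\mapsto(x_1-x_0,\dots,x_p-x_0)$, which sends $\iota_0K$ to $-\Delta_pK$.

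Since $\Delta_{p+1}K\subseteq K^{p+1}=\iota_0K+\dots+\iota_pK$, we apply \eqref{eq:monotonicity} $n$ times, replacing the copies of $\Delta_{p+1}K$ one at a time. Evaluating the final term by the projection formula gives $\binom{n}{k_0,\dots,k_p}\V_n(K)^{p+1}$ up to constants. Equality in \eqref{eq:higher-godbersen} therefore holds if and only if each monotonicity step is an equality.

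\textbf{Step 2: equality as a support condition.} By Lemma \ref{lem2.11}, equality in the $j$-th step means $h_{\Delta_{p+1}K}=h_{K^{p+1}}$ on the support of the mixed area measure in which $j$ entries are already $K^{p+1}$. We argue as in the case $p=1$: expanding $K^{p+1}$ multilinearly and using positivity, all these conditions together reduce to a single one,
\[
h_{\Delta_{p+1}K}=h_{K^{p+1}}\quad\text{on}\quad\supp \S_{B[n-1],\iota_0K[k_0],\dots,\iota_pK[k_p]},
\]
possibly after reorganizing the multiplicities $n-k_i$ against the $n$ replaced slots. We then apply the local projection formula, Proposition \ref{prop:local_proj}, to identify this support. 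A direction $(u_0,\dots,u_p)$ lies in it essentially when each factor measure $\S_{K[k_i],B[n-1-k_i]}$ charges $u_i$, and this is governed by the condition $\dim T(K,u_i)\le k_i$. This matches the case $p=1$ used for Proposition \ref{prop1.3}, so we use the same touching-cone description of supports.

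\textbf{Step 3: translating the support identity.} We have $h_{K^{p+1}}(u_0,\dots,u_p)=\sum_i h_K(u_i)$ and $h_{\Delta_{p+1}K}(u_0,\dots,u_p)=h_K\bigl(\sum_i u_i\bigr)$. Equality $h_K(\sum u_i)=\sum h_K(u_i)$ holds exactly when the support sets satisfy $F(K,\sum u_i)=\bigcap_i F(K,u_i)\ne\emptyset$. Equivalently, $u_0,\dots,u_p$ all lie in one normal cone, namely $N(K,x)$ for some common point $x$.

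We pass from points of the support to their touching cones using closedness of supports and the fact that $T(K,u)$ is the smallest face of a normal cone containing $u$. The condition then becomes: for all $u_i$ with $\dim T(K,u_i)\le k_i$, there is a $w$ (take $w=\sum u_i$ in a relative interior) with $T(K,u_i)\subseteq T(K,w)$ for all $i$. For the converse we perturb within touching cones, so that the relative interior of each $T(K,u_i)$ is captured.

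\textbf{Main obstacle.} The hardest step is the exact identification in Step 2 of the support of the mixed area measure in $\RR^{(p+1)n}$ in terms of dimensions of touching cones, including the closure issues for non-polytopal $K$. We expect to reuse the $p=1$ argument factor by factor via Proposition \ref{prop:local_proj}.
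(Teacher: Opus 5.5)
\textbf{There is a genuine gap in Step~2.} Your Steps 1 and 3 agree with the paper. That covers the passage to $\RR^{(p+1)n}$ via $(x_0,\dots,x_p)\mapsto(x_1-x_0,\dots,x_p-x_0)$, the $n$-fold monotonicity, and Lemma~\ref{lem2.11}, which gives the single support condition directly with no multilinear expansion needed. It also covers the translation of $h_K(\sum_i u_i)=\sum_i h_K(u_i)$ into a common normal cone $N(K,x)$. Your $w=\sum_i u_i$ works too, since $T(K,\sum_i u_i)$ is the join of the faces $T(K,u_i)$ of $N(K,x)$.

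Step~2 is the actual content of the proposition, and you only sketch it. Three problems:
\begin{itemize}
\item The relevant measure is $\S_{B[n-1],\widetilde\iota_0K[n-k_0],\dots,\widetilde\iota_pK[n-k_p]}$. Yours has $\iota_iK[k_i]$, which for $p\ge2$ gives $2n-1$ entries instead of $(p+1)n-1$.
\item Your factor measures $\S_{K[k_i],B[n-1-k_i]}$ have extreme directions $\dim T(K,u_i)\le n-k_i$, not $\le k_i$.
\item More importantly, Proposition~\ref{prop:local_proj} does not apply to this measure. It needs bodies in subspaces all containing a common direction $w$, with independent traces on $w^\perp$. The subspaces $\widetilde\iota_i\RR^n$ meet only in $0$, and $B[n-1]$ is not of that form.
\end{itemize}
There is also no separate $p=1$ argument to reuse: Proposition~\ref{prop1.3} is just the case $p=1$ of this statement.

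\textbf{What is needed is two separate support inclusions.}

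For ``only if'': given nonzero $u_i$ with $\dim T(K,u_i)\le k_i$, you must show that $u=(u_0,\dots,u_p)$ lies in the support. The paper does this as follows.
\begin{itemize}
\item Choose $E_i\subseteq T(K,u_i)^\perp$ with $\dim E_i=n-k_i$, and set $E_i'=\widetilde\iota_iE_i\oplus\RR u$ and $E=\bigoplus_i\widetilde\iota_iE_i\oplus\RR u$.
\item Then $P_E\widetilde\iota_iK=P_{E_i'}\widetilde\iota_iK$ is full-dimensional in $E_i'$. These subspaces all contain $u$, and their traces on $u^\perp$ are mutually orthogonal.
\item By \eqref{eq7}, $T_{E_i'}(P_{E_i'}\widetilde\iota_iK,u)=\RR_{\geq0}u$. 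So $u$ is $0$-extreme for each projected body.
\item The local projection formula now puts $u$ in $\supp\S^E_{P_E\widetilde\iota_0K[n-k_0],\dots,P_E\widetilde\iota_pK[n-k_p]}$.
\item Corollary~\ref{cor2.9}, applied with $\dim E^\perp=n-1$, then adds the $B[n-1]$.
\end{itemize}

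For ``if'': you need the reverse inclusion, that the support lies in the closure of the $(B[n-1],\widetilde\iota_0K[n-k_0],\dots,\widetilde\iota_pK[n-k_p])$-extreme directions. This is \cite[Corollary~1.9]{vHW25}, and no projection formula can supply it. Since $T(\widetilde\iota_iK,u)^\perp=\widetilde\iota_i\bigl(T(K,u_i)^\perp\bigr)$, these extreme directions are exactly the $u$ with $\dim T(K,u_i)\le k_i$ for all $i$. Continuity of support functions then handles the closure. ``Perturbing within touching cones'' does not substitute for this upper bound on the support.
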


We then inductively extend  Theorem \ref{thm:equivalent} from two faces (touching cones) to $p+1$ faces (touching cones) in the following way.

\begin{theorem}\label{thm:higher_equivalent}
Let $K\subseteq\RR^n$ be a full-dimensional convex body. Let $1\leq p\leq n-1$ and fix integers $k_0,\ldots,k_p\geq 1$ with $\sum_{i=0}^p k_i=n$.
Then the following are equivalent:
\begin{enuma}
\item $K$ is a simplex.
\item For any faces $F_0,\ldots,F_p$ of $K$ with $\dim F_i\leq k_i-1$, $i=0,\dots,p$, there exists a proper face $F$ of $K$ such that $F_i\subseteq F$ for all $i=0,\dots,p$.
\item For any $p+1$ directions $u_0,\dots, u_{p}\in \RR^n$ with $\dim T(K,u_i)\leq k_i$, $i=0,\dots,p$, there exists a direction $w\in \RR^n$ such that $T(K,u_i)\subseteq T(K,w)$ for all $i=0,\dots,p$.
\end{enuma}
\end{theorem}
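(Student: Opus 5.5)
The plan is to prove (a)$\Rightarrow$(b), (a)$\Leftrightarrow$(c), and the main implication (b)$\Rightarrow$(a) by induction on $p$, with base case $p=1$ given by Theorem \ref{thm:equivalent}.

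\emph{(a)$\Rightarrow$(b).} If $K$ is a simplex, every face is the convex hull of a set of vertices, and a face of dimension at most $k_i-1$ has at most $k_i$ vertices. Hence the union of the vertex sets of $F_0,\dots,F_p$ has at most $\sum_i k_i=n$ vertices. Since $K$ has $n+1$ vertices, these $n$ vertices span a proper face containing every $F_i$.

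\emph{(a)$\Leftrightarrow$(c).} First translate $K$ so that $0\in\interior K$; conditions (b) and (c) are translation invariant. Polarity then exchanges touching cones of $K$ of dimension $j$ with faces of $K^\circ$ of dimension $j-1$, inclusion-preservingly, as in Section \ref{sec2}. The zero cone $T(K,0)$ corresponds to the empty face, and $T(K,w)$ for $w\neq0$ corresponds to a proper face. So (c) for $K$ is exactly (b) for $K^\circ$. Since $K$ is a simplex if and only if $K^\circ$ is, it suffices to prove (a)$\Leftrightarrow$(b).

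\emph{(b)$\Rightarrow$(a), induction step.} Let $p\ge2$ and assume the result for $p-1$. Merge the last two indices by setting $k'=k_{p-1}+k_p$; this gives a $p$-tuple of positive integers summing to $n$. I want to show that (b) for $(k_0,\dots,k_p)$ implies (b) for $(k_0,\dots,k_{p-2},k')$. For this, take a face $G$ with $\dim G\le k'-1$, together with $F_0,\dots,F_{p-2}$. The task is to find faces $F_{p-1},F_p$ with $\dim F_{p-1}\le k_{p-1}-1$ and $\dim F_p\le k_p-1$ such that any proper face containing both also contains $G$.

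The natural tool is Carathéodory-type decomposition. A point $x\in\relint G$ lies in the convex hull of at most $\dim G+1\le k_{p-1}+k_p$ extreme points of $G$, and these are extreme points of $K$. Split them into two groups of sizes at most $k_{p-1}$ and $k_p$, and let $F_{p-1},F_p$ be the smallest faces containing the respective groups. Any face containing both groups contains $x$, hence contains $G$, because a face meeting $\relint G$ contains $G$.

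The obstacle is that the smallest face containing $k_{p-1}$ extreme points may have dimension exceeding $k_{p-1}-1$ when $K$ is not a polytope. To fix this, I would first show that (b) forces all faces to be simplicial in a controlled way. One route is to apply (b) with all $F_j$ except two taken to be points or empty. This recovers condition (b) of Theorem \ref{thm:equivalent} for the pair $(k_{p-1},\,n-k_{p-1})$, because a face of dimension at most $n-k_{p-1}-1$ should be handled the same way. That route is circular, however.

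So instead I would use the monotone structure directly: condition (b) for $(k_0,\dots,k_p)$ implies condition (b) for $(1,n-1)$. Iteratively replace faces by unions of points, taking $F_i$ empty or a single extreme point and absorbing dimensions; the base case $k=1$ of Theorem \ref{thm:equivalent} then shows $K$ is a simplex. The substantive claim is therefore that (b) with larger $k_i$ is inherited when one index is decreased and another increased, and this is where the inductive face-splitting argument above is needed. I expect this reduction, controlling the dimensions of faces generated by extreme points in a non-polytopal body, to be the main difficulty. I would handle it by proving first, via (b) with singletons, that every proper face of $K$ is a simplex whose vertices are extreme points of $K$. Once this holds, generated faces have the expected dimension and the merging argument goes through.
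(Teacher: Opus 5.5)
Your arguments for (a)$\Rightarrow$(b) and for (a)$\Leftrightarrow$(c) via polarity are the paper's. So is the overall scheme for (b)$\Rightarrow$(a): merge the last two indices until $p=1$ and invoke Theorem~\ref{thm:equivalent}.

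The gap is in the merging step, which is the whole content of the induction. Given a face $G$ with $\dim G\le k_{p-1}+k_p-1$, you need faces $F_{p-1},F_p\subseteq G$ with $\dim F_{p-1}\le k_{p-1}-1$, $\dim F_p\le k_p-1$ and $F_{p-1}\vee F_p=G$. The proposal never produces them.

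The fix you settle on is to show first, ``via (b) with singletons'', that every proper face of $K$ is a simplex. This is false. With singletons, (b) only says that any $p+1$ extreme points lie in a common proper face, and that does not force simplicial faces. Take $K=S\times S\subseteq\RR^6$ with $S$ a $3$-simplex and $p=2$. Any three vertices of $K$ use at most three vertices of the first factor, so they lie in a facet $S'\times S$, where $S'$ is a facet of $S$. That facet has $12$ vertices and is not a simplex.

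More generally, (b) is vacuous for faces $F_i$ that all lie in one proper face $H$, since $F=H$ works. So (b) says nothing directly about the internal facial structure of $H$. There is no evident route to simpliciality of proper faces short of the theorem itself.

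Your other variant, passing to (b) for $(1,n-1)$ by shifting dimension between indices, relies on the same unproven splitting and is circular. Also, the obstacle you identify is not specific to non-polytopes. Two opposite vertices of a square face already generate a $2$-dimensional face, so an arbitrary Carath\'eodory split fails for polytopes too.

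The missing idea is that the splitting needs no hypothesis on $K$ at all. Proposition~\ref{pro:reduction_higher_godbersen} shows that every $m$-dimensional convex body is the join of two proper faces of dimensions at most $f-1$ and $g-1$ whenever $f+g\ge m+1$. Apply it to $G$ inside $\aff G$ with $f=k_{p-1}$ and $g=k_p$.

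The paper's proof of that proposition runs as follows.
\begin{enumerate}
\item Take a pair $(F_j,G_j)$ with $F_j\vee G_j=G$ that is minimal, so that \eqref{eq:subface_property_F} and \eqref{eq:subface_property_G} hold.
\item Lemma~\ref{lem:2cases} then makes $F_j,G_j$ affinely independent, with $(F_j\vee y)\cap\aff(F_j,G_j)=\conv(F_j,y)$ for every extreme point $y\in G_j$. This gives $\dim(F_j\vee y)\le m-\dim G_j$.
\item Choose $y$ off a maximal proper face $\widetilde G$ of $G_j$ and move it into the first face. By the bound in step 2, the first face stays of dimension at most $f-1$.
\item Iterate until the second face has dimension at most $g-1$.
\end{enumerate}
Some argument of this type, controlling the dimension of a join through affine independence of a minimal pair, is what must replace your simpliciality claim.
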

Combining Proposition \ref{prop1.6} with Theorem \ref{thm:higher_equivalent} then completes the equality characterization of the higher-order Godbersen inequality \eqref{eq:higher-godbersen}, and thus proves Theorem \ref{thm:higherGodbersen}.

\subsection{Relation to other recent work}
Brandenberg, González Merino, and Grundbacher \cite{BGMG}, using an argument from \cite{KM26} together with a consequence of \eqref{eq:higher-godbersen}, previously conjectured in \cite[Conjecture 1.2]{K25}, recently proved an $L_q$-version of Schneider's inequality \eqref{eq:schneider} for general $q\in[1,\infty]$. Moreover, they also established an optimal stability result for their inequality. This in particular subsumes a stability version of \eqref{eq:schneider}, generalizing to the higher-order setting the stability of the Rogers--Shephard inequality previously proven by Böröczky \cite{boroczky05}.

By the time the present paper was completed, we learned that Friedland, Putterman, and Szusterman \cite{FPS} also obtained the equality characterization for the Godbersen inequality \eqref{eq:godbersen} using an approach significantly different from ours. Notably, their approach is based on using algebraic manipulations to reduce the problem of characterizing equality cases of \eqref{eq:godbersen} to the case $k=1$, where Conjecture \ref{con:G} was known to hold. Although we use a similar reduction scheme to prove Theorem \ref{thm:equivalent}, our method is more geometric in nature and deals with the equality cases for each $k=1,\dots,n-1$. 
In a subsequent update, Friedland, Putterman, and Szusterman \cite{FPS} extended their approach to the higher-order Godbersen inequality \eqref{eq:higher-godbersen}, providing another proof of our Theorem \ref{thm:higherGodbersen}.

\medskip\subsection*{Organization of the paper.}
Section \ref{sec2} recalls facial structure, normal structure,   mixed volumes, and mixed area measures.
Section \ref{sec3} proves the inequality part in Theorem \ref{thm:higherGodbersen}.
Section \ref{sec4} proves the local projection formula for mixed area measures,   and Propositions  \ref{prop1.3} and \ref{prop1.6}.
Section \ref{sec5} proves Theorem \ref{thm:equivalent} and thus together with Section \ref{sec4} completes the proof of Theorem \ref{thm1.1}.
Section \ref{sec6} proves  Theorem \ref{thm:higher_equivalent} and combined with Section \ref{sec4}, proves the equality characterization in Theorem \ref{thm:higherGodbersen}.

\subsection*{Acknowledgments}
We thank Károly Böröczky, Eli Putterman, Maud Szusterman, and Ramon van Handel for useful discussions.

FF was supported by Charles University grants PRIMUS/24/SCI/009 and GAUK/34126. JK was supported by Charles University grants PRIMUS/24/SCI/009 and UNCE/24/SCI/022. MM was supported by Charles University grant PRIMUS/24/SCI/009. SW was partially supported by the Knut and Alice Wallenberg Foundation.

\subsection*{AI disclosure}
The authors used ChatGPT to assist with secondary tasks such as checking the proofs, correcting typos, and finding references.
The mathematical contents are due to the authors, who take full responsibility for the paper.

\section{Preliminaries}
\label{sec2}

A convex body is a nonempty compact convex set, possibly of lower dimension. We denote by $\calK^n$ the set of all convex bodies in $\RR^n$.
Since the polar body is an important tool in this paper,  
we will often work with the class $\calK^n_o$ of full-dimensional convex bodies in $\RR^n$ containing the origin in their interior. Throughout the paper we assume $n\geq2$.

\subsection{Facial and normal structure}

\label{ss:facial_normal}

We  first recall basic notions about the facial structure and normal structure of convex bodies and
refer to \cite[§2.1--2.2]{Sch14}, 
\cite[§18]{Roc70}, \cite{W12}, and \cite{vHW25} for a more comprehensive treatment.

\subsubsection{Facial structure}
\label{sss:facial}

For two points $x,y\in \RR^n$, denote by $[x,y]$ the closed line segment from $x$ to $y$ and denote by $(x,y)$ the open line segment from $x$ to $y$.
A \emph{face} of a convex body $K\in\calK^n$ is a nonempty convex subset $F\subseteq K$ such that, whenever $x,y\in K$ and $(x,y)\cap F\neq \emptyset$, the whole segment $[x,y]$ lies in $F$.
In particular, $K$ is a face of itself, and any face other than $K$ is called a \emph{proper face}. 
Note that a face of a face of $K$ is again a face of $K$.
Also, if $F$ and $G$ are two faces of $K$, then $F\cap G$, if nonempty, is a face of $F$, $G$, and $K$.

For any nonempty subset $A\subseteq K$, we define $F(K,A)$ as the smallest (in the order of inclusion) face of $K$ containing $A$.
Alternatively, $F(K,A) = \bigcap_{F\supseteq A}  F$, where the intersection runs over all faces $F$ of $K$ containing $A$.
When $A = \{x\}$ for some $x\in K$, we write simply $F(K,x)$ instead of $F(K,\{x\})$.\footnote{
In previous literature, the notation $F(K,u)$ is often used for the \emph{exposed} face in the direction of $u$. 
However, we reserve it for general faces since exposed faces are almost never used in the present paper. }

\begin{lemma}\label{lem:minimal-face}
The face $F(K,x)$ is the unique face of $K$ containing $x$ in its relative interior.
\end{lemma}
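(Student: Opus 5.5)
We have to show two things: $x\in\relint F(K,x)$, and no other face of $K$ contains $x$ in its relative interior. The tool for both is the following elementary fact, proved first.

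\emph{Fact.} If $F$ is a face of $K$ and $x\in\relint F$, then every face $G$ of $K$ with $x\in G$ satisfies $F\subseteq G$.

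To prove the Fact, take any $y\in F$ with $y\neq x$. Since $x\in\relint F$, the segment from $y$ through $x$ can be extended slightly beyond $x$ inside $F$. That is, there is $\varepsilon>0$ such that $z:=x+\varepsilon(x-y)\in F\subseteq K$. Then $x\in(y,z)$, so $(y,z)\cap G\neq\emptyset$. By the definition of a face, $[y,z]\subseteq G$, and in particular $y\in G$. As $y\in F$ was arbitrary, $F\subseteq G$.

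\emph{Uniqueness.} Suppose $F$ and $F'$ are faces of $K$ with $x\in\relint F\cap\relint F'$. Both contain $x$, so applying the Fact in each direction gives $F\subseteq F'$ and $F'\subseteq F$. Hence $F=F'$.

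\emph{Existence.} We need a face containing $x$ in its relative interior, and then identify it with $F(K,x)$. Set $F_0:=K$. If $x\in\relint F_0$, stop. Otherwise $x\in\relbd F_0$, so there is a supporting hyperplane of $F_0$ within $\aff F_0$ that passes through $x$ and does not contain $F_0$. The intersection of $F_0$ with this hyperplane is a face $F_1$ of $F_0$, hence a face of $K$. It contains $x$ and satisfies $\dim F_1<\dim F_0$. Repeat this step. The dimension strictly decreases each time, and a $0$-dimensional face $\{x\}$ has $x$ in its relative interior, so the process ends after at most $n$ steps. It produces a face $F$ of $K$ with $x\in\relint F$.

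\emph{Identification.} By the Fact, every face of $K$ containing $x$ contains $F$, so $F\subseteq F(K,x)=\bigcap_{G\ni x}G$. Conversely, $F$ is itself one of the faces in that intersection, so $F(K,x)\subseteq F$. Therefore $F(K,x)=F$, and by uniqueness it is the only face of $K$ containing $x$ in its relative interior.

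The one point needing care is in the existence step: a relative boundary point of a convex set lies on a supporting hyperplane, taken within the affine hull. This is the standard supporting hyperplane theorem. Everything else follows directly from the definition of a face.
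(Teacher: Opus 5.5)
Your proof is correct. It rests on the same two ingredients as the paper: the supporting hyperplane theorem at relative boundary points, and the fact that a face meeting the relative interior of another face contains it. You organize them differently, though.

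The paper argues from minimality of $F(K,x)$. If $x\in\relbd F(K,x)$, a supporting hyperplane gives a proper face of $F(K,x)$ through $x$, which would be a strictly smaller face of $K$ containing $x$. For uniqueness, $F(K,x)\subseteq F'$ makes $F(K,x)$ a face of $F'$, and it cannot be a proper face because proper faces of $F'$ lie in $\relbd F'$.

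You instead prove your Fact first. It is exactly the paper's later Lemma~\ref{lemma2.1}, with the same segment-extension argument. You then build a face with $x$ in its relative interior by dimension descent, and identify it with $\bigcap_{G\ni x}G$.

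Your version is more self-contained in two ways. It does not need the well-definedness of the smallest face $F(K,x)$ in advance, since it exhibits the intersection as a concrete face. It also proves explicitly the step ``a proper face of $F'$ avoids $\relint F'$'', which the paper's uniqueness argument uses tacitly.

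The paper's version is shorter: minimality collapses your iterated descent into a single application of the supporting hyperplane theorem.
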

\begin{proof}
    First, we have  $x\in \relint F(K,x)$. Indeed, if $x\in \relbd F(K,x)$, then we may choose a proper face $G$ of  $F(K,x)$ containing $x$. Then $G$ is a face of $K$ which is strictly contained in $F(K,x)$, contradicting the definition of $F(K,x)$.
    Now we show the uniqueness. 
    Let $F'$ be any face  with $x\in \relint F'$. Then $x\in F(K,x)\cap F'$. The definition of $F(K,x)$ implies $F(K,x)\subseteq F'$. Therefore, $F(K,x)$ is a face of $F'$. But if $F(K,x)$ is a proper face of $F'$, then  $x\in F(K,x)\subseteq \relbd F'$, contradicting the choice of $F'$. Hence $F' = F(K,x)$ as desired.
\end{proof}

A point $x\in K$ is an \emph{extreme point} if $\{x\}$ is a face of $K$. More generally, a point  $x\in K$ is called \emph{$p$-extreme} if $\dim F(K,x)\leq p$. In particular, 0-extreme points are exactly extreme points.
We recall that Minkowski's theorem asserts that a convex body is the convex hull of its extreme points \cite[Corollary 1.4.5]{Sch14}.

Consider the collection of all the  faces of $K$ together with the empty set, ordered by inclusion. This forms a lattice, which we call the \emph{face lattice} of $K$. For two faces $F,\,G$ of $K$, their \emph{join} $F\vee G$ is therefore the smallest face containing both of them, which may be $K$. In other words, $F\vee G =F(K, F\cup G)$.
Their \emph{meet} is $F\wedge G=F\cap G$, which may be empty.
When $F = \{x\}$ is a singleton, we  write simply $F\vee G = x\vee G$.

The \emph{support function} of a convex body $K\in\calK^n$ is defined by $h_K(y):=\max_{x\in K}\langle y,x\rangle$, $y\in\RR^n$. Observe that $h_K$ is 1-homogeneous. The \emph{exposed face} of $K$ in the direction of $u\in\SS^{n-1}$ is given by $\{x\in K:\langle u,x\rangle=h_K(u)\}$. An exposed face is always a face but not vice versa.
While this will not be used in the paper, we note that exposed faces together with the empty set also form a lattice.

We record two elementary facts about faces that will be repeatedly used throughout the proof of Theorem \ref{thm:equivalent} in Section \ref{sec5}.

\begin{lemma}\label{lemma2.1}
Let $F$ and $G$ be faces of $K\in\calK^n$.
If $F\cap \relint G\neq  \emptyset$, then $G\subseteq F$.
\end{lemma}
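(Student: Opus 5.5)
Take a point $z\in F\cap\relint G$; the plan is to show that an arbitrary $y\in G$ lies in $F$, which gives $G\subseteq F$. The only tools needed are the definition of a face and the fact that $z$ is a relative interior point of the convex set $G$.

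Fix $y\in G$. If $y=z$ there is nothing to prove, so assume $y\neq z$. Because $z\in\relint G$ and $y\in G\subseteq\aff G$, we can move a little past $z$ along the line from $y$ through $z$ and stay in $G$. Concretely, for $\varepsilon>0$ small enough the point
\[
x:=z+\varepsilon(z-y)
\]
lies in $G$, and hence in $K$. Then
\[
z=\tfrac{1}{1+\varepsilon}\,x+\tfrac{\varepsilon}{1+\varepsilon}\,y,
\]
so $z$ lies in the open segment $(x,y)$. Since $z\in F$, the open segment $(x,y)$ meets $F$, and $x,y\in K$. The defining property of a face then gives $[x,y]\subseteq F$; in particular $y\in F$. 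As $y\in G$ was arbitrary, $G\subseteq F$.

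The only step that needs care is the existence of $x$, namely that a relative interior point of $G$ can be pushed slightly beyond itself within $G$ in any direction lying in $\aff G$. This follows directly from the definition of relative interior: some relative neighbourhood $B(z,r)\cap\aff G$ is contained in $G$, and $z-y$ is parallel to $\aff G$. With this in hand, the rest is immediate.
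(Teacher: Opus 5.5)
Your proof is correct and follows essentially the same approach as the paper: take a point of $F\cap\relint G$, extend the segment from an arbitrary $y\in G$ slightly past that point while staying in $G$, and apply the defining property of a face. You also justify the extension step from the definition of relative interior, which the paper leaves implicit.
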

\begin{proof}
Fix $x\in F\cap \relint G$.
For any $a\in G\setminus\{x\}$, there exists $b\in G$ such that $x\in(a,b)$.
Since $F$ is a face, we have $a,b\in F$.
Since $a$ was chosen arbitrarily in $G$, it follows that $G\subseteq F$.
\end{proof}

\begin{lemma}\label{lemma2.2}
Let $F$ and $G$ be faces of $K\in\calK^n$, and let $u\in \relint F$ and $v\in \relint G$.
Then $(u,v)\subseteq \relint (F\vee G)$.
\end{lemma}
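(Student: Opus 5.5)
Set $H:=F\vee G$ and fix $z\in(u,v)$; the goal is to show $z\in\relint H$. The segment lies in $H$ by convexity, since $u,v\in H$. By Lemma \ref{lem:minimal-face}, $z$ lies in the relative interior of exactly one face, namely $F(K,z)$. So it suffices to show $F(K,z)=H$.

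First I show $F\subseteq F(K,z)$ and $G\subseteq F(K,z)$. Since $z\in(u,v)$ and $z$ belongs to the face $F(K,z)$, the definition of a face gives $[u,v]\subseteq F(K,z)$. In particular $u\in F(K,z)\cap\relint F$, so Lemma \ref{lemma2.1} yields $F\subseteq F(K,z)$. The same argument with $v$ gives $G\subseteq F(K,z)$.

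Now $F(K,z)$ is a face containing $F\cup G$. Since $H$ is the smallest such face, $H\subseteq F(K,z)$. Conversely, $z\in H$ and $F(K,z)$ is the smallest face containing $z$, so $F(K,z)\subseteq H$. Hence $F(K,z)=H$, and therefore $z\in\relint H$ by Lemma \ref{lem:minimal-face}.

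There is no real obstacle here. One degenerate case needs a remark: if $u=v$, then $(u,v)$ is empty and the claim holds trivially. Every other step is a direct application of the face axiom and the two preceding lemmas.
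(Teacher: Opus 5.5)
Your proof is correct and is essentially the paper's argument: place $z\in(u,v)$ in $\relint F(K,z)$ via Lemma \ref{lem:minimal-face}, use the face property and Lemma \ref{lemma2.1} to get $F,G\subseteq F(K,z)$, and conclude $F(K,z)=F\vee G$. The separate remark on $u=v$ is unnecessary, since under some conventions $(u,u)=\{u\}$ rather than the empty set, and your main argument covers that case anyway.
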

\begin{proof}
Let $x\in (u,v)$ and consider $H:=F(K,x)$. Lemma \ref{lem:minimal-face} implies $x\in \relint H$.
Since $x\in (u,v)$, we have $u,v\in H$.
Hence by Lemma \ref{lemma2.1} we have $F,G \subseteq H$.
Since $x\in F\vee G$, we have $H = F\vee G$.
Therefore $x\in \relint (F\vee G)$ and the claim follows.
\end{proof}

\subsubsection{Normal structure and duality}
\label{sss:normal}
This entire section is dual to Section \ref{sss:facial} under taking the polar body, see below. We first recall normal cones, which are dual to exposed faces. Fix a  convex body $K\in\calK^n$.
The \emph{normal cone} at $x\in  K$ is
$$
N(K,x):=\{v\in\RR^n:\langle v,y-x\rangle\leq0 \text{ for every }y\in K\}.
$$
Second, we recall touching cones, which are dual to faces.
For $u\in\RR^n$,
fix any $x\in K$ with $\langle x,u\rangle  = h_K(u)$ and define the \emph{touching cone} $T(K,u)$ as the smallest face of $N(K,x)$ containing $u$.
It can be seen that this does not depend on the choice of $x$.
In this paper, we   consider $\{0\}$ as a touching cone or a normal cone: when $K$ is full-dimensional,  $N(K,x) = \{0\}$ for $x\in \interior K$ and $T(K,0)=\{0\}$.
We call a nonzero touching (normal) cone a \emph{proper} touching (normal) cone.
Lastly, we recall $p$-extreme directions, which are dual to $p$-extreme points.
A direction $u\in \SS^{n-1}$ is called \emph{$p$-extreme} if $\dim T(K,u)\leq p+1$.

While normal and touching cones are defined for all convex bodies, in order to introduce the duality we further assume $K\in\calK^n_o$. The polar (dual) body $K^\circ:=\{v\in\RR^n:h_K(v)\leq1\}$ also belongs to $\calK^n_o$.
The following fact, due to Weis \cite{W12}, establishes the duality between touching cones  (normal cones resp.) of $K$ and faces (exposed faces resp.) of $K^\circ$.

\begin{lemma}[{\cite[Proposition 4.7 and Theorem 8.3]{W12}}]
\label{lem:fact}
Let $K\in\calK^n_o$. There exist bijections
$$
\begin{aligned}
\{\text{proper faces of }K^\circ\}&\overset{\sim}{\longrightarrow}\{\text{proper touching cones of }K\},\\
\{\text{proper exposed faces of }K^\circ\}&\overset{\sim}{\longrightarrow}\{\text{proper normal cones of }K\},
\end{aligned}
$$
given by $F\mapsto\RR_{\geq0}F$. Moreover, the map $T\mapsto T\cap\partial K^\circ$ gives the inverse to both maps above.
\end{lemma}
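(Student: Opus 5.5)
The plan is to derive both bijections from the geometry of the boundary $\partial K^\circ=\{v:h_K(v)=1\}$, together with the radial correspondence between closed cones and subsets of $\partial K^\circ$. Since $K^\circ\in\calK^n_o$, every ray $\RR_{\geq0}v$ with $v\neq0$ meets $\partial K^\circ$ in exactly one point, namely $v/h_K(v)$; here $h_K(v)>0$ because $0\in\interior K$. Hence for a cone $T\neq\{0\}$ we have $\RR_{\geq0}(T\cap\partial K^\circ)=T$, and for $A\subseteq\partial K^\circ$ we have $(\RR_{\geq0}A)\cap\partial K^\circ=A$. So once the two maps are shown to land in the stated classes, the inverse claim and bijectivity are automatic.

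\emph{Normal cones and exposed faces.} Take $x\in\partial K$. Since $K^{\circ\circ}=K$, we have $h_{K^\circ}(x)=1$, so the exposed face of $K^\circ$ in direction $x$ is
$$E_x=\{v:h_K(v)\leq1,\ \langle v,x\rangle=1\}.$$
On the other hand, $N(K,x)=\{v:\langle v,x\rangle=h_K(v)\}$, and therefore $N(K,x)\cap\partial K^\circ=E_x$. Every proper exposed face of $K^\circ$ is of the form $E_x$ with $x\neq0$ rescaled to lie on $\partial K$. Every proper normal cone is $N(K,x)$ for some $x\in\partial K$, since interior points give $\{0\}$. This proves the second bijection.

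\emph{Touching cones and faces.} For $v\in N(K,x)\setminus\{0\}$ we have $\langle v,x\rangle=h_K(v)>0$, so $E_x$ is a genuine base of the pointed cone $N(K,x)$: it is the section of the cone by the hyperplane $\{\langle\cdot,x\rangle=1\}$. By the standard base argument, nonzero faces of $N(K,x)$ correspond bijectively to faces of $E_x$ via $G\mapsto\RR_{\geq0}G$. Moreover, since $E_x$ is a face of $K^\circ$, the faces of $E_x$ are exactly the faces of $K^\circ$ contained in $E_x$, and $F(E_x,u)=F(K^\circ,u)$ for $u\in E_x$.

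Now take $u\neq0$, scaled so that $u\in\partial K^\circ$, and choose $x$ with $\langle x,u\rangle=h_K(u)$, rescaled so that $x\in\partial K$; then $u\in E_x$. It follows that
$$T(K,u)=\RR_{\geq0}F(E_x,u)=\RR_{\geq0}F(K^\circ,u).$$
This expression also shows that $T(K,u)$ is independent of the choice of $x$.

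Conversely, let $F$ be a proper face of $K^\circ$ and pick $u\in\relint F$. Then $F=F(K^\circ,u)$ by Lemma~\ref{lem:minimal-face}, so $\RR_{\geq0}F=T(K,u)$. Hence the map $F\mapsto\RR_{\geq0}F$ is onto the proper touching cones and well defined, and the radial correspondence gives injectivity and identifies the inverse.

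The main obstacle I expect is the cone–base step: checking carefully that faces of the cone $N(K,x)$ are exactly the cones over faces of $E_x$. Positivity of $\langle\cdot,x\rangle$ on $N(K,x)\setminus\{0\}$ is what makes $E_x$ a genuine base rather than merely a radial image. A second point needing care is that a proper face $F\subseteq\partial K^\circ$ lies in some exposed face $E_x$. For this I would take a supporting hyperplane at a point of $\relint F$ and apply Lemma~\ref{lemma2.1} to the resulting exposed face.
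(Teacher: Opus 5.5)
Your argument is correct, and it takes a genuinely different route from the paper: the paper gives no proof and imports the statement from \cite[Proposition 4.7 and Theorem 8.3]{W12}. Weis relates faces, exposed faces, normal cones and touching cones through lattice (anti-)isomorphisms, in the generality of arbitrary, not necessarily closed, convex sets. You instead verify the claim directly for $K\in\calK^n_o$, using four ingredients:
\begin{enuma}
\item the bipolar identity $N(K,x)\cap\partial K^\circ=E_x$ for $x\in\partial K$;
\item positivity of $\langle\cdot,x\rangle$ on $N(K,x)\setminus\{0\}$, which makes $E_x$ a compact base of $N(K,x)$;
\item the cone-over-base correspondence of faces;
\item the fact that the faces of the face $E_x$ are exactly the faces of $K^\circ$ contained in $E_x$.
\end{enuma}
Your route yields the explicit formula $T(K,u)=\RR_{\geq0}F\bigl(K^\circ,u/h_K(u)\bigr)$. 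That formula proves that $T(K,u)$ does not depend on the choice of $x$, which the paper asserts without proof. It also makes evident that both maps preserve inclusion, and that is what the proof of Theorem~\ref{thm:equivalent} actually uses. The citation buys brevity and a more general lattice-theoretic framework.

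Three small points to tidy up. First, the maximizer $x$ of $\langle\cdot,u\rangle$ over $K$ cannot be ``rescaled'', since that would destroy $\langle x,u\rangle=h_K(u)$. You don't need it to be: any such $x$ automatically lies in $\partial K$ when $u\neq0$. Second, in the surjectivity step, note explicitly that $u\in\relint F\subseteq\partial K^\circ$, because a proper face of a full-dimensional body lies in its boundary (by Lemma~\ref{lemma2.1} applied with $G=K^\circ$). Only then does your formula for $T(K,u)$ apply. Third, the second difficulty you anticipate does not arise: once $u\in E_x$, minimality already gives $F(K^\circ,u)\subseteq E_x$, so no separate supporting-hyperplane argument is needed.
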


\begin{remark}
Using the first bijection of Lemma \ref{lem:fact}, we may immediately transfer all the properties of faces to touching cones. 
For example, Lemma \ref{lem:minimal-face} implies that $u\in \relint T(K,u)$. In addition,  for any $v\in T(K,u)\setminus\{0\}$,  $T(K,v)$ is the face of $T(K,u)$ containing $v$ in the relative interior, cf. \cite[Lemma 2.2]{vHW25}. 
Note also that $u\in \SS^{n-1}$ is a $p$-extreme direction for $K\in\mathcal{K}^n_o$ if and only if $\RR_{\geq 0}u\cap \partial K^\circ$ is a $p$-extreme point of $K^\circ$.
\end{remark}
Finally, we recall the relation between the touching cones of a projection with the touching cones of the original body.
Write $T_E$ for the cones computed in a subspace $E\subseteq\RR^n$.
If $u\in E\cap \SS^{n-1}$, then 
\begin{align}\label{eq7}
    T_E(\proj_E K,u)=T(K,u)\cap E,
\end{align}
where $\proj_E:\RR^n\to E$ is the orthogonal projection. This follows by applying \cite[Lemma 2.4]{vHW25} iteratively.

\subsection{Mixed volumes and mixed area measures}
\label{ss:mixed}

Now we define mixed volumes and mixed area measures, and recall their properties which will be needed in later sections. The standard reference for this material is \cite[Chapter 5]{Sch14}.

For convex bodies $K_1,\dots,K_m\in\calK^n$ and $t_1,\dots,t_m\geq0$, the volume
$$
\V_n\left(\sum_{i=1}^m t_iK_i\right)=\sum_{i_1,\dots,i_n=1}^m t_{i_1}\cdots t_{i_n}\V_n(K_{i_1},\dots,K_{i_n})
$$
is a homogeneous polynomial whose symmetric coefficients $\V_n(K_{i_1},\dots,K_{i_n})$ are called \emph{mixed volumes}.
Mixed volumes are nonnegative, translation invariant, and multilinear.
Volume can be recovered from mixed volumes by $\V_n(K)=\V_n(K[n])$, where $K[r]$ denotes $r$ repeated copies of $K$.
More generally, when the convex bodies are contained in a subspace $E\subseteq\RR^n$, we write $\V_{\dim E}^E$ for volume and mixed volume computed within $E$.
In particular, $\V_n = \V_n^{\RR^n}$.

It is easily seen from the definition that mixed volumes are affine covariant: for any affine map $\phi: \RR^n\rightarrow \RR^n$, where $\phi(x)=Ax+b$, and bodies $K_1,\dots,K_n\in\calK^n$, we have
\begin{equation}
\label{eq:mv_covariance}
\V_n(\phi K_1,\dots,\phi K_n)=|\det A|\V_n(K_1,\dots,K_n).
\end{equation}
We also recall the projection formula for mixed volumes \cite[Theorem~5.3.1]{Sch14}.
Let $E\subseteq\RR^n$ be a $k$-dimensional subspace, where $1\leq k\leq n-1$.
Then for any $L_1,\dots,L_k,M_1,\dots,M_{n-k}\in\calK^n$ with $L_i\subseteq E$ for $i=1,\dots,k$, one has
\begin{equation}\label{eq:MVprojection}
\binom{n}{k}\V_n(L_1,\dots,L_k,M_1,\dots,M_{n-k})=\V_k^E(L_1,\dots,L_k)\V_{n-k}^{E^\perp}(\proj_{E^\perp}M_1,\dots,\proj_{E^\perp}M_{n-k}).
\end{equation}
We will also need a similar formula for multiple linearly independent subspaces.

\begin{lemma}\label{lem:exact}
Let $E_1,\dots,E_m\subseteq\RR^n$ be linearly independent subspaces with $d_i:=\dim E_i\geq1$ and $\sum_{i=1}^m d_i=n$.
For convex bodies $K_i\subseteq E_i$, one has
\begin{align}
\label{eq:exact}
\binom{n}{d_1,\dots,d_m}\V_n(K_1[d_1],\dots,K_m[d_m])=|\det T|\prod_{i=1}^m\V_{d_i}^{E_i}(K_i),
\end{align}
where $T:\RR^n\to\bigoplus_{i=1}^m E_i$ is given by $Tx=(\proj_{E_i}x)_{i=1}^m$.
\end{lemma}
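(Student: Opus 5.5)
The plan is to reduce the statement to the case where all the $E_i$ are mutually orthogonal, and then obtain that case by induction on $m$ from the projection formula \eqref{eq:MVprojection}.

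\textbf{Step 1: the orthogonal case ($T$ an isometry).} Assume $E_1,\dots,E_m$ are pairwise orthogonal, so that $\RR^n=E_1\oplus\cdots\oplus E_m$ orthogonally and $|\det T|=1$. We argue by induction on $m$; the case $m=1$ is trivial. For $m\geq2$, apply \eqref{eq:MVprojection} with $E=E_1$, $k=d_1$, $L_j=K_1$ and $M_j$ the remaining bodies $K_2[d_2],\dots,K_m[d_m]$. This gives
\begin{equation*}
\binom{n}{d_1}\V_n(K_1[d_1],\dots,K_m[d_m])=\V_{d_1}^{E_1}(K_1)\,\V_{n-d_1}^{E_1^\perp}(K_2[d_2],\dots,K_m[d_m]),
\end{equation*}
since $\proj_{E_1^\perp}K_i=K_i$ for $i\geq2$ by orthogonality. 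Now apply the induction hypothesis inside $E_1^\perp=E_2\oplus\cdots\oplus E_m$ to the last factor. Finally, combine the binomial coefficients via $\binom{n}{d_1}\binom{n-d_1}{d_2,\dots,d_m}=\binom{n}{d_1,\dots,d_m}$.

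\textbf{Step 2: the general case.} Let $E_1,\dots,E_m$ be merely linearly independent, and consider the external direct sum $\bigoplus_i E_i$ with the orthogonal product inner product. Define the linear isomorphism $S:\bigoplus_i E_i\to\RR^n$ by $S(x_1,\dots,x_m)=\sum_i x_i$; it restricts to the identity on each $E_i$. The convex bodies $\tilde K_i:=\{(0,\dots,x,\dots,0):x\in K_i\}$, with $x$ in the $i$-th slot, lie in mutually orthogonal subspaces and satisfy $S\tilde K_i=K_i$.

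Identifying $\bigoplus_i E_i$ with $\RR^n$ by an isometry, the covariance formula \eqref{eq:mv_covariance} gives
\begin{equation*}
\V_n(K_1[d_1],\dots,K_m[d_m])=|\det S|\,\V_n(\tilde K_1[d_1],\dots,\tilde K_m[d_m]).
\end{equation*}
By Step 1 applied to the $\tilde K_i$, the right-hand side equals $|\det S|\binom{n}{d_1,\dots,d_m}^{-1}\prod_i\V_{d_i}^{E_i}(K_i)$.

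\textbf{Step 3: computing the determinant (main obstacle).} It remains to show $|\det S|=|\det T|$, which is where care is needed, since the two maps are not obviously related. The idea is to relate them via adjoints. For $x\in\RR^n$ and $y=(y_i)\in\bigoplus_i E_i$ we have
\begin{equation*}
\langle Tx,y\rangle=\sum_i\langle \proj_{E_i}x,y_i\rangle=\sum_i\langle x,y_i\rangle=\langle x,Sy\rangle,
\end{equation*}
so $T=S^*$ and hence $|\det T|=|\det S|$. Here the determinants are computed with respect to orthonormal bases, which is legitimate because the identification of $\bigoplus_i E_i$ with $\RR^n$ in Step 2 was an isometry. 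With this, \eqref{eq:exact} follows.
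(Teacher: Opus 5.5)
Your proof is correct, but it handles the orthogonal case by a different mechanism than the paper.

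The paper never uses the projection formula \eqref{eq:MVprojection}. It notes that $\sum_i t_iK_i$ is the image, under the summation isomorphism you call $S$, of the Cartesian product $t_1K_1\times\cdots\times t_mK_m$. Hence $\V_n(\sum_i t_iK_i)=|\det T|\prod_i t_i^{d_i}\V_{d_i}^{E_i}(K_i)$ for all $t_i\geq0$. It then reads off \eqref{eq:exact} by comparing the coefficient of $t_1^{d_1}\cdots t_m^{d_m}$ with the multinomial expansion of the mixed volume polynomial.

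That argument treats all $m$ at once and needs only the definition of mixed volumes and the volume of a linear image of a product. It uses no induction and does not rely on the deeper projection theorem. Your Step 1 could be shortened the same way, since for mutually orthogonal $E_i$ the Minkowski sum is literally a product.

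On the other hand, your Step 3 makes explicit something the paper leaves implicit. The paper writes $|\det T|$ where the natural Jacobian is that of the summation map $(x_i)\mapsto\sum_i x_i$. Your computation $T=S^*$ is exactly the justification for that replacement. Your inductive route also shows that \eqref{eq:exact} in the orthogonal case is an iterated form of \eqref{eq:MVprojection}. That is a nice conceptual link, but it uses more machinery than the statement requires.
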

\begin{proof}
By \eqref{eq:mv_covariance}, for $t_1,\dots,t_m\geq0$ we have
$$
\V_n\left(\sum_{i=1}^m t_iK_i\right)=|\det T|\prod_{i=1}^m t_i^{d_i}\V_{d_i}^{E_i}(K_i).
$$
Comparing the coefficients of $t_1^{d_1}\cdots t_m^{d_m}$ proves the claim.
\end{proof}

For convex bodies $C_1,\dots, C_{n-1}\in\calK^n$, their \emph{mixed area measure} $\S_{C_1,\dots, C_{n-1}}$ is defined as the unique measure on $\mathbb S^{n-1}$ such that 
\begin{equation}\label{eq:area-integral}
\V_n(K,C_1,\dots,C_{n-1}) = \frac{1}{n}
\int_{\SS^{n-1}} h_K d\S_{C_1,\dots, C_{n-1}}
\end{equation}
holds for all $K\in\calK^n$.
Similar to mixed volumes,
mixed area measures are nonnegative, translation invariant, and multilinear.
When the bodies $C_1,\dots,C_{d-1}$ are contained in a $d$-dimensional subspace $E\subseteq \RR^n$, we will denote the mixed area measure by $\S^E_{C_1,\ldots,C_{d-1}}$  to specify the ambient space.

By \eqref{eq:mv_covariance}, mixed area measures satisfy the following covariance formula under invertible linear maps, cf. \cite[Equation (5.24)]{Sch14}.
For any invertible linear map $A:\RR^n\to\RR^n$, convex bodies $C_1,\dots,C_{n-1}\in\calK^n$, and any continuous positively $1$-homogeneous function $f:\RR^n\to\RR$, we have
\begin{equation}\label{eq:area-covariance}
\int_{\mathbb S^{n-1}} f(v)\,d\S_{AC_1,\dots,AC_{n-1}}(v)
=
|\det A|\int_{\mathbb S^{n-1}} f(A^{-t}u)\,d\S_{C_1,\dots,C_{n-1}}(u),
\end{equation}
where $A^{-t}$ denotes the inverse transpose of $A$.

\subsubsection{Mixed discriminants}
For convex bodies with $C^2$ support functions, the mixed area measure admits an explicit representation in terms of mixed discriminants.
Denote by $\Sym(n)$ the set of symmetric $n$-by-$n$ matrices.
The \emph{mixed discriminant} $\D_n$ is defined as the unique symmetric multilinear function  on $\Sym(n)^n$ such that $\D_n(M,\dots, M) = \det (M)$ 
holds for all $M\in \operatorname{Sym(n)}$.
Now, for $C_i\in\calK^n$ such that $h_{C_i}\in C^2(\SS^{n-1})$, their mixed area measure has a density with respect to the Hausdorff measure on the sphere $\mathbb S^{n-1}$; see \cite[Equation (2.68)]{Sch14}. More precisely,  for $u\in \mathbb S^{n-1}$, one has
\begin{equation}
\label{eq:MDrep}
    \frac{d\S_{C_1,\dots, C_{n-1}}}{d \mathcal{H}^{n-1}} (u) = \D_{n-1}(D^2h_{C_1}(u),\dots, D^2h_{C_{n-1}}(u)),
\end{equation}
where $D^2 h_C\in\Sym(n-1)$ is the restriction of the Euclidean Hessian of $h_C$ to $u^\perp$.

We will need the following version of Lemma \ref{lem:exact} for mixed discriminants.


\begin{lemma}
\label{lem2.5}
Let $E_1,\dots,E_m\subseteq\RR^n$ be linearly independent subspaces with $d_i:=\dim E_i\geq1$ and $\sum_{i=1}^m d_i=n$.
For any symmetric matrices $A_1,\dots,A_m\in\Sym(n)$ such that $A_i$ vanishes on $E_i^\perp$, $i=1,\dots,m$, one has
\begin{equation}
    \label{eq:MDprojection}
\binom{n}{d_1,\ldots,d_m}
\D_n(A_1[d_1],\ldots,A_ m[d_m])
=
|\det T|^2\prod_{i=1}^ m\det(A_i|_{E_i}),
\end{equation}
where $T:\RR^n\to\bigoplus_{i=1}^ m E_i$ is given by $Tx=(P_{E_i}x)_{i=1}^ m$.
\end{lemma}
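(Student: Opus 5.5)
The plan is to mirror the proof of Lemma \ref{lem:exact}: use the polynomial identity that defines mixed discriminants, with the determinant computation done explicitly instead of through affine covariance of volume. For $t_1,\dots,t_m\geq 0$ (or formally), consider $A(t):=\sum_{i=1}^m t_iA_i$. By symmetry and multilinearity of $\D_n$,
$$
\det A(t)=\D_n(A(t)[n])=\sum_{j_1+\dots+j_m=n}\binom{n}{j_1,\dots,j_m}t_1^{j_1}\cdots t_m^{j_m}\,\D_n(A_1[j_1],\dots,A_m[j_m]).
$$
The left-hand side of \eqref{eq:MDprojection} is therefore the coefficient of $t_1^{d_1}\cdots t_m^{d_m}$ in $\det A(t)$. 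It remains to show that
$$
\det A(t)=|\det T|^2\prod_i t_i^{d_i}\det(A_i|_{E_i}),
$$
and then compare coefficients.

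To compute $\det A(t)$, first note that a symmetric $A_i$ vanishing on $E_i^\perp$ maps into $(E_i^{\perp})^{\perp}=E_i$. So $A_i=\iota_{E_i}B_i\proj_{E_i}$, where $B_i:=A_i|_{E_i}:E_i\to E_i$ is symmetric and $\iota_{E_i}$ is the inclusion, whose adjoint is $\proj_{E_i}$. Write $B(t):=\bigoplus_i t_iB_i$, an endomorphism of $\bigoplus_i E_i$, equipped with the orthogonal direct-sum inner product. Let $S:\bigoplus E_i\to\RR^n$ be the summation map $(x_i)\mapsto\sum_i \iota_{E_i}x_i$. Its adjoint is exactly $T$, since $\langle \sum_i x_i,y\rangle=\sum_i\langle x_i,\proj_{E_i}y\rangle$. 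Hence
$$
A(t)=\sum_i t_i\iota_{E_i}B_i\proj_{E_i}=S\,B(t)\,S^{*}=T^{*}B(t)\,T.
$$
Because the $E_i$ are linearly independent and their dimensions sum to $n$, $S$ is a linear isomorphism between two $n$-dimensional inner product spaces. Therefore $\det A(t)=\det(T^*)\det B(t)\det T=|\det T|^2\prod_i t_i^{d_i}\det B_i$. Here $\det T$ is taken with respect to orthonormal bases, so it is well defined up to sign, and $\det T^*=\det T$.

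Comparing this with the expansion above gives \eqref{eq:MDprojection}. The other coefficients vanish automatically, which is consistent.

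The only point needing care is the identification $A_i=\iota_{E_i}(A_i|_{E_i})\proj_{E_i}$ together with $T=S^*$; this is what makes the factor $|\det T|^2$ rather than $|\det T|$ appear. The rest is the same polarization argument used for Lemma \ref{lem:exact}. If one prefers to avoid abstract direct sums, the same argument works in coordinates: choose orthonormal bases of each $E_i$, let $S$ be the $n\times n$ matrix whose columns are these bases concatenated, and compute $\det(S\,\mathrm{diag}(t_iB_i)\,S^t)$.
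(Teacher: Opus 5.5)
Your proposal is correct and follows the paper's proof. The paper likewise writes $\sum_i t_iA_i=T^t\,\operatorname{diag}(t_iA_i|_{E_i})\,T$, takes determinants to get $|\det T|^2\prod_i t_i^{d_i}\det(A_i|_{E_i})$, and compares coefficients of $t_1^{d_1}\cdots t_m^{d_m}$; your factorization $A_i=\iota_{E_i}(A_i|_{E_i})\proj_{E_i}$ with $T=S^*$ just spells out the paper's one-line remark that $T$ block diagonalizes $\sum_i t_iA_i$.
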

\begin{proof}
The operator $T$ block diagonalizes
$\sum_{i=1}^m t_i A_i$ for any $t_1,\dots,t_m\in\RR$. Hence
$$
\det\left(\sum_{i=1}^ m t_iA_i\right)
=
\det\left(T^t\operatorname{diag}(t_iA_i|_{E_i})_{i=1}^ m T\right)
=
|\det T|^2\prod_{i=1}^ m t_i^{d_i}\det(A_i|_{E_i}),
$$
where $\operatorname{diag}(t_iA_i|_{E_i})_{i=1}^ m$ is the block diagonal matrix where the $i$-th block is $t_iA_i|_{E_i}$.
The conclusion  now follows by comparing coefficients.
\end{proof}

 In Section \ref{sec4.1}, we will see that \eqref{eq:MDprojection} gives rise, via \eqref{eq:MDrep}, to a local projection formula for mixed area measures.

\subsubsection{Monotonicity of mixed volumes and support of mixed area measures}
Using the representation of mixed volumes by mixed area measures \eqref{eq:area-integral}, we immediately obtain the monotonicity inequality for mixed volumes: 
for $K,L\in\calK^n$ with $K\subseteq L$, we have $h_K\leq h_L$ and hence
$$
\V_n(K,C_1,\dots,C_{n-1}) \leq\V_n(L,C_1,\dots,C_{n-1}),
$$
and equality holds if and only if
$$
h_K=h_L\quad \text{on} \quad \supp \S_{C_1,\dots, C_{n-1}}.
$$
This will later be generalized in Lemma \ref{lem2.11}.

Therefore, the study of  equality cases in the monotonicity inequality is the same as the study of support of mixed area measures. 
Schneider \cite{Sch85} made an insightful conjecture which connects the support of mixed area measures to touching cones. 
Fix $C_1,\dots, C_{n-1}\in \calK^n$. We say $u\in \SS^{n-1}$ is a \emph{$(C_1,\dots, C_{n-1})$-extreme direction} if for every $I\subseteq\{1,\dots,n-1\}$, we have 
$$
\dim \left(\sum_{i\in I} T(C_i, u)^\perp\right)\geq |I|.
$$
Schneider conjectured that
$$
\supp S_{C_1,\dots, C_{n-1}} = \operatorname{cl} \left\{u\in \SS^{n-1}: u \text{ is $(C_1,\dots, C_{n-1})$-extreme}\right\}.
$$
When $C_1=\cdots =C_{n-1} = C$, $S_C:=S_{C,\dots,C}$ is the surface area measure of $C$ and the $(C,\dots,C)$-extreme directions are precisely the 0-extreme directions of $C$. It is known that Schneider's conjecture indeed holds for this case, see \cite[Theorem 4.5.3]{Sch14}.
However, in general this conjecture is still wide open.

For our purposes, we will  need the following partial result from \cite[Theorem 1.21]{vHW25}: if $K,C_1,\ldots,C_{n-2}\in\calK^n$, $u\in\SS^{n-1}$, $\dim T(K,u)=1$, and $v\in T(K,u)^\perp\setminus\{0\}$, then
$$
 u\in\supp \S^{v^\perp}_{\proj_{v^\perp}C_1,\ldots,\proj_{v^\perp}C_{n-2}}\quad\Longrightarrow\quad
 u\in\supp \S_{K,C_1,\ldots,C_{n-2}}.
$$
More precisely, we will make use of the following corollary of this result.

\begin{corollary}\label{cor2.9}
Let $E\subseteq\RR^n$ be an $m$-dimensional subspace, where $2\leq m\leq n$, and let $C_1,\dots,C_{m-1}\in\calK^n$.
Let $B$ be the Euclidean unit ball in $\RR^n$. 
We have
$$
\supp\S^E_{\proj_E C_1,\dots,\proj_E C_{m-1}}\subseteq\supp\S_{B[n-m],C_1,\dots,C_{m-1}}.
$$
\end{corollary}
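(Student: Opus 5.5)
The plan is an induction on the codimension $n-m$, climbing up a flag of subspaces. At each step I apply the partial result of \cite[Theorem 1.21]{vHW25} quoted above, taking $K$ to be a Euclidean ball.

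Fix $u\in\supp\S^E_{\proj_E C_1,\dots,\proj_E C_{m-1}}$, so $u\in E\cap\SS^{n-1}$. Choose a flag of subspaces
$$E=E_m\subset E_{m+1}\subset\cdots\subset E_n=\RR^n,\qquad \dim E_j=j.$$
Write $B_j:=\proj_{E_j}B$; this is the unit ball of $E_j$. I will prove by induction on $j=m,\dots,n$ that
$$u\in\supp\S^{E_j}_{B_j[j-m],\,\proj_{E_j}C_1,\dots,\proj_{E_j}C_{m-1}}.$$
The case $j=m$ is the hypothesis, and the case $j=n$ is the claim. All the measures involved are defined because $m\ge2$. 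Since the theorem of \cite{vHW25} is invariant under isometries, it may be applied inside each $E_{j+1}$, identified with $\RR^{j+1}$. It also holds for arbitrary, possibly lower-dimensional, convex bodies, so it does not matter that the projected bodies may be degenerate.

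For the inductive step $j\to j+1$, apply the theorem in the ambient space $E_{j+1}$ of dimension $j+1$ with the following data:
\begin{itemize}
\item the distinguished body $K:=B_{j+1}$;
\item the remaining $(j+1)-2=j-1$ bodies $B_{j+1}[j-m]$ and $\proj_{E_{j+1}}C_1,\dots,\proj_{E_{j+1}}C_{m-1}$ (count: $(j-m)+(m-1)=j-1$);
\item the direction $u\in E_j\subseteq E_{j+1}$;
\item a vector $v$ spanning $E_{j+1}\cap E_j^\perp$.
\end{itemize}
The ball is smooth, so $T_{E_{j+1}}(B_{j+1},u)=\RR_{\ge0}u$ has dimension one. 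Since $u\in E_j$, we get $v\in T(B_{j+1},u)^\perp\setminus\{0\}$, as the theorem requires. The hyperplane $v^\perp\cap E_{j+1}$ is exactly $E_j$, and projections compose: $\proj_{E_j}\proj_{E_{j+1}}=\proj_{E_j}$. Hence the projected bodies are $B_j[j-m]$ and $\proj_{E_j}C_i$. The theorem's hypothesis is therefore precisely the induction hypothesis, and its conclusion is $u\in\supp\S^{E_{j+1}}_{B_{j+1},B_{j+1}[j-m],\proj_{E_{j+1}}C_i}$, which is the statement for $j+1$.

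The only delicate point is bookkeeping rather than mathematics: making sure the theorem, stated in $\RR^n$, legitimately transfers to each subspace $E_{j+1}$, and that the projections of $B$ and $C_i$ compose correctly along the flag. Both follow from isometry invariance and the identity $\proj_{E_j}\proj_{E_{j+1}}=\proj_{E_j}$. I expect no genuine obstacle beyond this.
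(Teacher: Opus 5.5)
Your proof is correct and is essentially the paper's argument. The paper inducts on $n-m$ by picking a unit vector $v\in E^\perp$, applying the induction hypothesis inside the hyperplane $v^\perp\supseteq E$ (using $\proj_E\proj_{v^\perp}=\proj_E$), and then applying \cite[Theorem 1.21]{vHW25} with $K=B$, $T(B,u)=\RR_{\geq0}u$ and $v\perp u$. Your flag $E=E_m\subset\cdots\subset E_n=\RR^n$ simply unrolls that same induction from the bottom up.
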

\begin{proof}
We induct on $n-m$, the case $n=m$ being immediate.
Suppose $n>m$ and fix $u\in\supp\S^E_{\proj_E C_1,\dots,\proj_E C_{m-1}}$.
Choose a unit vector $v\in E^\perp$.
Since $\proj_{v^\perp}B$ is the unit ball in $v^\perp$ and $\proj_E\proj_{v^\perp}=\proj_E$, the induction hypothesis gives
$$
u\in\supp\S^{v^\perp}_{(\proj_{v^\perp}B)[n-m-1],\proj_{v^\perp}C_1,\dots,\proj_{v^\perp}C_{m-1}}.
$$
Since $T(B,u)=\RR_{\geq0}u$ and $v\perp u$, \cite[Theorem~ 1.21]{vHW25} implies $u\in\supp\S_{B[n-m],C_1,\dots,C_{m-1}}$.
\end{proof}

\subsubsection{Equality in monotonicity inequality} We recall the following well-known reformulation of the equality cases of the Alexandrov--Fenchel inequality, see \cite[Theorem 7.4.2]{Sch14} or {\cite[Lemma~3.12(a)]{SvH20}}. 
\begin{lemma}
\label{lem3.1}
    Let $K,L,C_1,\dots, C_{n-2}\in\mathcal{K}^n$. Assume $\V_n(L,L,C_1,\dots,C_{n-2})>0$ and
    $$
    \V_n(K,L,C_1,\dots, C_{n-2})^2 = \V_n(K,K,C_1,\dots, C_{n-2})\V_n(L,L,C_1,\dots, C_{n-2}).
    $$
    Then $\S_{K,C_1,\dots, C_{n-2}} = \alpha \S_{L,C_1,\dots, C_{n-2}} $
    holds with  $\alpha =\frac{\V_n(K,L,C_1,\dots,C_{n-2})}{\V_n(L,L,C_1,\dots,C_{n-2})}.$
\end{lemma}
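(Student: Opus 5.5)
The plan is to derive this from the equality case of the Alexandrov--Fenchel inequality via the standard variational argument. Define the quadratic form $Q(t):=\V_n(K+tL,K+tL,C_1,\dots,C_{n-2})$ for $t$ in a neighbourhood of $0$; by multilinearity it is a quadratic polynomial in $t$. The hypothesis says that the Alexandrov--Fenchel inequality
$$
\V_n(K,L,C_1,\dots,C_{n-2})^2 \geq \V_n(K,K,C_1,\dots,C_{n-2})\V_n(L,L,C_1,\dots,C_{n-2})
$$
holds with equality. First normalize: set $a=\V_n(L,L,\mathbf C)>0$, $b=\V_n(K,L,\mathbf C)$, and $c=\V_n(K,K,\mathbf C)$, where $\mathbf C$ abbreviates $C_1,\dots,C_{n-2}$. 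Equality means $b^2=ac$, that is, $\alpha=b/a$ satisfies $c=\alpha^2a$.

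The key step is to test the equality against an arbitrary third body $M\in\calK^n$. Consider the function
$$
f(s):=\V_n(K+sM,L,\mathbf C)^2-\V_n(K+sM,K+sM,\mathbf C)\,\V_n(L,L,\mathbf C)
$$
for small $s\geq0$. By the Alexandrov--Fenchel inequality, $f(s)\geq0$ for all $s\geq0$, and $f(0)=0$ by hypothesis. Hence $f'(0^+)\geq0$, which gives
$$
2b\,\V_n(M,L,\mathbf C)-2\,\V_n(K,M,\mathbf C)\,a\geq0,
$$
i.e.\ $\V_n(M,K,\mathbf C)\leq\alpha\V_n(M,L,\mathbf C)$ for all $M$. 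By \eqref{eq:area-integral}, this reads $\int h_M\,d(\alpha\S_{L,\mathbf C}-\S_{K,\mathbf C})\geq0$ for every convex body $M$.

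To obtain the reverse inequality, one cannot simply take $s<0$, since $K+sM$ need not be defined. Instead, use that both measures have their mass and first moments controlled. The measure $\mu:=\alpha\S_{L,\mathbf C}-\S_{K,\mathbf C}$ satisfies $\int h_M\,d\mu\geq0$ for all $M$. Taking $M=K$ gives $\int h_K\,d\mu=n(\alpha b-c)=n(\alpha^2a-\alpha^2a)=0$, and similarly $M=L$ gives $n(\alpha a-b)=0$. Also, mixed area measures have centroid zero, so $\int h_{\{x\}}d\mu=0$ for every point $x$.

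The main obstacle is upgrading ``$\int h_M\,d\mu\geq0$ for all $M$'' to $\mu=0$. The route I would take is to note that differences of support functions are dense in $C(\SS^{n-1})$: every $C^2$ function $g$ is $h_{M_1}-h_{M_2}$ with $M_2$ a large ball and $M_1$ the body whose support function is $g+Rh_B$, which is convex for $R$ large. This alone does not give a sign. To settle it, I would apply the same perturbation with the roles arranged so that $M$ enters with a negative sign. Replace $K$ by $K_s:=K+s M_2$ and $L$ by comparison with $K+sM_1$ when $g+R$ is a support function. Alternatively, and more cleanly, I would apply the derivative argument to the function $s\mapsto f$ built from $L+sM$ in place of $L$, which yields the opposite-type inequality $\V_n(M,K,\mathbf C)\geq\alpha\V_n(M,L,\mathbf C)$ after using $b^2=ac$. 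If that computation closes, then $\int h_M\,d\mu=0$ for all $M$, and density gives $\mu=0$. If it does not, I would fall back on the standard citation \cite[Theorem 7.4.2]{Sch14}, where this step is carried out via the equality case of the associated Minkowski-type quadratic form, which is negative semidefinite on the complement of $L$.
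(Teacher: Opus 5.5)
Your proposal is correct once the second variation is actually carried out, and it takes a different route from the paper, which gives no proof of Lemma~\ref{lem3.1} and simply cites \cite[Theorem 7.4.2]{Sch14} and \cite[Lemma 3.12(a)]{SvH20}. You left the decisive step conditional, so here is the check that it closes. Write $\mathcal C=(C_1,\dots,C_{n-2})$ and keep your $a,b,c$. Alexandrov--Fenchel for the bodies $K$ and $L+sM$ gives
\[
g(s):=\V_n(K,L+sM,\mathcal C)^2-\V_n(K,K,\mathcal C)\,\V_n(L+sM,L+sM,\mathcal C)\geq 0,\qquad s\geq 0,
\]
with $g(0)=b^2-ac=0$. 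Hence
\[
g'(0)=2b\,\V_n(K,M,\mathcal C)-2c\,\V_n(L,M,\mathcal C)\geq 0 .
\]
Substituting $b=\alpha a$ and $c=\alpha^2 a$ gives $\alpha a\bigl(\V_n(K,M,\mathcal C)-\alpha\V_n(L,M,\mathcal C)\bigr)\geq 0$.

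For $\alpha>0$ this is exactly the reverse of your first-variation inequality. For $\alpha=0$ (equivalently $b=c=0$) you cannot divide, but your first variation already gives $\V_n(K,M,\mathcal C)\leq 0$, hence $=0$ by nonnegativity of mixed volumes. So $\V_n(K,M,\mathcal C)=\alpha\V_n(L,M,\mathcal C)$ for every $M\in\calK^n$. Your density argument together with \eqref{eq:area-integral} then yields $\S_{K,\mathcal C}=\alpha\S_{L,\mathcal C}$.

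Write it this way and drop the detours: the centroid remark, testing $M=K$ and $M=L$, the garbled $K+sM_2$ idea, and the fallback citation. None of them is needed.

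As for what each route buys: the citation is shortest. A standard self-contained alternative uses the hyperbolic structure of $f\mapsto\V_n(f,f,\mathcal C)$ on differences of support functions. That form is negative semidefinite on $\{f:\V_n(f,h_L,\mathcal C)=0\}$. The vector $h_K-\alpha h_L$ is isotropic there, so it lies in the kernel. This route needs Alexandrov--Fenchel extended to differences of support functions.

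Your two one-sided first variations, perturbing $K$ and then $L$ toward an arbitrary body $M$, give the two opposite inequalities using Alexandrov--Fenchel only for genuine convex bodies. The lemma thus becomes an elementary corollary of Alexandrov--Fenchel itself, at the cost of the trivial case distinction at $\alpha=0$.
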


By iterating this lemma, we obtain a characterization of the monotonicity inequality in terms of the support of mixed area measures in the following lemma, which is  \cite[Theorem~7.4.6]{Sch14}. We include a more direct proof here for self-containedness.
\begin{lemma}\label{lem2.11}
Fix an integer $1\leq k\leq n$.
Let $K,L,C_1,\dots, C_{n-k}\in\mathcal{K}^n$ be convex bodies such that $K\subseteq L$ and $\V_n(L[k],C_1,\dots,C_{n-k})>0$.
Denote by $B$ the unit Euclidean ball in $\RR^n$.
Then
$$
\V_n(K[ k],C_1,\ldots,C_{n- k})=\V_n(L[ k],C_1,\ldots,C_{n- k})
$$
    holds  if and only if
    $$
    h_K=h_L\quad\text{on}\quad\operatorname{supp}\S_{B[ k-1],C_1,\ldots,C_{n- k}}.
    $$
\end{lemma}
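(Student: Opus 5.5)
The plan is to interpolate between $K$ and $L$ one copy at a time and use Lemma~\ref{lem3.1} to propagate equality down the chain. For $j=0,\dots,k$ set
$$
a_j:=\V_n(K[j],L[k-j],C_1,\dots,C_{n-k}).
$$
By monotonicity applied to one slot at a time, $a_0\geq a_1\geq\cdots\geq a_k$, where $a_0$ is the $L$-term and $a_k$ the $K$-term. Hence equality $a_0=a_k$ holds if and only if $a_{j-1}=a_j$ for every $j$.

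\textbf{The easy direction.} Assume $h_K=h_L$ on $\supp\S_{B[k-1],C_1,\dots,C_{n-k}}$. The goal is to show $a_{j-1}=a_j$ for all $j$. Writing each $a_j$ via \eqref{eq:area-integral} with the distinguished slot being $K$ or $L$, we get
$$
a_{j-1}-a_j=\tfrac1n\int (h_L-h_K)\,d\S_{K[j-1],L[k-j],C_1,\dots,C_{n-k}}.
$$
So it suffices to show that
$$
\supp\S_{K[j-1],L[k-j],\mathbf C}\subseteq\supp\S_{B[k-1],\mathbf C}.
$$
Translate so that $K$, and hence $L$, contains a point; after that, $K,L\subseteq RB$ for some $R$. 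Monotonicity of mixed area measures in each argument (they are monotone as measures under inclusion? no — they are not). So instead I would argue by the converse chain below: it gives equality of measures up to scaling, which implies the support inclusion in the situation at hand. Honestly, the cleaner route is to run everything through Lemma~\ref{lem3.1} with $B$ inserted, as follows.

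\textbf{The main step: the direction ``equality $\Rightarrow$ $h_K=h_L$ on the support''.} I would induct on $k$. The case $k=1$ is exactly the monotonicity equality statement recorded just before Lemma~\ref{lem3.1}.

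For $k\geq2$, set $\mathbf C'=(L[k-2],\mathbf C)$ and consider the quantities
$$
\V_n(K,K,\mathbf C'),\qquad \V_n(K,L,\mathbf C'),\qquad \V_n(L,L,\mathbf C').
$$
Since $a_0=a_k$ forces $a_0=a_1=a_2$, these three numbers are all equal and positive. Therefore equality holds in Alexandrov--Fenchel, and Lemma~\ref{lem3.1} gives $\S_{K,\mathbf C'}=\S_{L,\mathbf C'}$ with $\alpha=1$.

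Iterating, one replaces $L$ by $K$ in successive slots. The aim is to obtain
$$
\S_{K[j],L[k-1-j],\mathbf C}=\S_{L[k-1],\mathbf C}
$$
for all $j$. The positivity conditions needed along the way are all equal to $a_0>0$.

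\textbf{Bringing in the ball.} I next want to replace these bodies by $B$. I would pass to the bodies $L_t:=L+tB$, using that $h_L$ is linear in Minkowski combinations. Equality at every stage should yield polynomial identities in $t$. Expanding $\S_{(L+tB)[k-1],\mathbf C}$ multilinearly, the top coefficient is $\S_{B[k-1],\mathbf C}$, and its support is contained in the support of the full expansion for $t>0$. So it suffices to prove
$$
h_K=h_L\quad\text{on}\quad \supp\S_{L_t[k-1],\mathbf C}\ \text{ for some } t>0.
$$
To get this, I would show that equality $\V_n(K[k],\mathbf C)=\V_n(L[k],\mathbf C)$ persists to the inequality between $\V_n(K,L_t[k-1],\mathbf C)$ and $\V_n(L,L_t[k-1],\mathbf C)$. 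This is the step I expect to be the main obstacle.

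To handle it, expand $\V_n(L,L_t[k-1],\mathbf C)-\V_n(K,L_t[k-1],\mathbf C)$ in powers of $t$. Each coefficient is an integral of $h_L-h_K$ against
$$
\S_{L[k-1-i],B[i],\mathbf C}.
$$
I would show that each such coefficient vanishes by induction on $i$. The tool is Lemma~\ref{lem3.1} applied with $B$ in the auxiliary slots, using the already established identities $\S_{K,\dots}=\S_{L,\dots}$ to substitute $K$ for $L$ one slot at a time. The converse direction then follows by the same expansion read backwards: vanishing on $\supp\S_{B[k-1],\mathbf C}$ is leveraged through those measure identities.
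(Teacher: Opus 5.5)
The ``if'' direction is not established. You correctly reduce it to the inclusion $\supp\S_{K[j-1],L[k-j],\mathbf C}\subseteq\supp\S_{B[k-1],\mathbf C}$, but then you drop it. The replacement you propose is circular: the identities $\S_{K,\dots}=\S_{L,\dots}$ come from Lemma~\ref{lem3.1}, which needs equality in Alexandrov--Fenchel, i.e.\ $a_0=a_1=a_2$. That is exactly what this direction is supposed to prove.

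Reading the $t$-expansion ``backwards'' does not help either. The difference $\V_n(L,L_t[k-1],\mathbf C)-\V_n(K,L_t[k-1],\mathbf C)$ is a polynomial in $t$ with nonnegative coefficients. The hypothesis only tells you that its top coefficient, the one integrated against $\S_{B[k-1],\mathbf C}$, vanishes.

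The missing ingredient is the classical inclusion $\supp\S_{C_1,C_2,\dots,C_{n-1}}\subseteq\supp\S_{B,C_2,\dots,C_{n-1}}$ \cite[Lemma 7.6.15]{Sch14}, which the paper invokes. Apply it once to each $K$- or $L$-slot, using symmetry of mixed area measures. This gives your inclusion, hence $a_{j-1}=a_j$ for every $j$.

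You are right that $C_1\subseteq RB$ is useless here, since mixed area measures are not monotone under inclusion. What works is the summand relation. If $h_{C_1}$ is $C^2$, then $R\|x\|-h_{C_1}(x)$ is convex for large $R$, so $RB=C_1+M$ for some convex body $M$. Multilinearity and nonnegativity then give $\S_{C_1,C_2,\dots}\le R\,\S_{B,C_2,\dots}$. A general $C_1$ is handled by approximating with such bodies, because weak limits of measures supported in a fixed closed set remain supported there.

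Your ``only if'' argument is sound and is essentially the paper's. Equality gives $a_0=\cdots=a_k$, and Lemma~\ref{lem3.1} turns consecutive triples into identities of mixed area measures. Integrating $h_B$ against these identities moves one copy of $B$ into the tuple, and you repeat. The paper does the repetition via the induction hypothesis applied with $(\mathbf C,B)$ in place of $\mathbf C$.

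The expansion of $L+tB$ is an unnecessary detour, since only the top coefficient $\int(h_L-h_K)\,d\S_{B[k-1],\mathbf C}=0$ is needed. You should, however, justify the positivity that Lemma~\ref{lem3.1} requires at the later stages, namely $\V_n(L[k-i],B[i],\mathbf C)>0$, which is not one of the $a_j$. It follows by translating $L$ into $RB$ and using monotonicity: $\V_n(L[k-i],B[i],\mathbf C)\ge R^{-i}\V_n(L[k],\mathbf C)>0$.
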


\begin{proof}
We first show the ``only if'' part by induction on $ k$.
    The base case is $ k=1$. By \eqref{eq:area-integral}, 
    $$
    \V_n(K,C_1,\dots,C_{n-1})-\V_n(L,C_1,\dots,C_{n-1})=
    \frac{1}{n}\int_{\mathbb S^{n-1}} (h_K-h_L) d\S_{C_1,\dots, C_{n-1}}.
    $$
    Since $K\subseteq L$, we have $h_K\leq h_L$. Therefore, the difference on the left-hand side vanishes if and only if $h_K=h_L$ on $\supp \S_{C_1,\dots, C_{n-1}}$.

    Now assume the conclusion holds for $ k-1$ and write $\mathcal C$ for the tuple $(C_1,\dots, C_{n-k}) $.
    By assumption and monotonicity of mixed volumes, we have that for all $1\leq i\leq  k-1$, 
    $  \V_n(K[i],L[ k-i],\mathcal C)  $
    is equal to the same positive constant.
    In particular,
    \begin{align*}        
    \V_n &(K,L,K[i-1],L[ k-i-1],\mathcal C)^2\\
    &=\V_n(K,K,K[i-1],L[ k-i-1],\mathcal C)\V_n(L,L,K[i-1],L[ k-i-1],\mathcal C).
    \end{align*}
    By Lemma \ref{lem3.1},  this implies $\S_{K,K[i-1],L[ k-i-1],\mathcal C} = \S_{L,K[i-1],L[ k-i-1],\mathcal C}$, and so by \eqref{eq:area-integral} we have 
    $$
    \V_n(K[i],L[ k-i-1], \mathcal{C}, B)=\V_n(K[i-1],L[ k-i], \mathcal{C}, B).
    $$
    Since this holds for all $1\leq i\leq  k-1$, we obtain $
    \V_n(K[ k-1], \mathcal{C}, B)=\V_n(L[ k-1], \mathcal{C}, B)$, and hence, by induction hypothesis, 
    $$
    h_K = h_L \quad \text{on} \quad \supp \S_{B[ k-2],\mathcal{C}, B} = \supp \S_{B[ k-1],\mathcal{C}}.
    $$
    
    For the ``if'' part, recall the following classical fact from \cite[Lemma 7.6.15]{Sch14}:
    $$
    \supp S_{C_1,C_2,\dots, C_{n-1}}\subseteq \supp S_{B, C_2,\dots, C_{n-1}}.
    $$
    This together with the assumption implies
    $$
    \V_n(K[ k],\mathcal C)=\V_n(K[ k-1],L,\mathcal C).
    $$
    Iterating the same argument then yields $\V_n(K[ k],\mathcal C)=\V_n(L[ k],\mathcal C)$.
\end{proof}

\section{Proof of higher-order Godbersen inequality}
\label{sec3}

We start by proving the higher-order Godbersen inequality \eqref{eq:higher-godbersen}. The method is essentially the same as the  monotonicity argument used in \cite{KM26} to prove the case $p=1$.

\begin{proposition}
\label{pro:higherGodbersen}
Let $p\geq1$ and $0\leq k_0,\dots, k_p\leq n$ be integers such that $\sum_{i=0}^pk_i=n$. For any $K\in\mathcal{K}^n$ we have
\begin{equation}
\label{eq:higherG}
 \V_{pn}\bigl(-\Delta_p K[n-k_0],\iota_1K[n-k_1],\ldots,
                    \iota_pK[n-k_p]\bigr)
 \leq\frac{(n!)^p}{(pn)!}\binom{n}{k_0,\ldots,k_p}\V_n(K)^p.
\end{equation}    
\end{proposition}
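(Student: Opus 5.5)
The plan is to run the monotonicity argument from \eqref{eq4} in $(\RR^n)^{p+1}=\RR^{(p+1)n}$, replacing $(\RR^n)^2$. Index the embeddings as $\iota_0,\dots,\iota_p$ and write $\Delta_{p+1}$ for the diagonal map into $(\RR^n)^{p+1}$. Let $E:=\Delta_{p+1}\RR^n$, an $n$-dimensional subspace. Consider the linear map
$$Q(x_0,\dots,x_p)=(x_1-x_0,\dots,x_p-x_0)$$
from $(\RR^n)^{p+1}$ onto $(\RR^n)^p$. Its kernel is exactly $E$, so it induces a linear isomorphism $E^\perp\to(\RR^n)^p$, and $Q\circ\proj_{E^\perp}=Q$. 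Under $Q$ the bodies transform as $Q\iota_0K=-\Delta_pK$ and $Q\iota_iK=\iota_iK$ for $i\geq1$.

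\emph{Step 1 (rewriting the left-hand side).} Apply the projection formula \eqref{eq:MVprojection} with the $n$ copies of $\Delta_{p+1}K\subseteq E$. The remaining bodies are $\iota_iK$ repeated $n-k_i$ times, and $\sum_i(n-k_i)=pn$. This gives
$$\binom{(p+1)n}{n}\V_{(p+1)n}\bigl(\Delta_{p+1}K[n],\iota_0K[n-k_0],\dots,\iota_pK[n-k_p]\bigr)=\V_n^E(\Delta_{p+1}K)\,\V^{E^\perp}_{pn}\bigl(\proj_{E^\perp}\iota_0K[n-k_0],\dots\bigr).$$
Next I would use \eqref{eq:mv_covariance} for the isomorphism $Q|_{E^\perp}$ to turn the last factor into a constant multiple of the left-hand side of \eqref{eq:higherG}. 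Similarly, $\V_n^E(\Delta_{p+1}K)=(p+1)^{n/2}\V_n(K)$. The result is
$$c_1\,\V_n(K)\cdot\text{LHS of }\eqref{eq:higherG}=\V_{(p+1)n}\bigl(\Delta_{p+1}K[n],\iota_0K[n-k_0],\dots,\iota_pK[n-k_p]\bigr)$$
for an explicit constant $c_1>0$ built from determinant factors and binomials.

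\emph{Step 2 (monotonicity and expansion).} Since $\Delta_{p+1}K\subseteq K^{p+1}=\sum_i\iota_iK$, applying \eqref{eq:monotonicity} $n$ times bounds the right-hand side of Step 1 by the same mixed volume with every $\Delta_{p+1}K$ replaced by $K^{p+1}$. I then expand by multilinearity. A term in which $\iota_iK$ appears $j_i+n-k_i$ times vanishes unless $j_i+n-k_i\leq n$ for every $i$, because $\iota_iK$ lies in an $n$-dimensional subspace. Since $\sum_i j_i=n=\sum_i k_i$, the only surviving choice is $j_i=k_i$ for all $i$, occurring with multiplicity $\binom{n}{k_0,\dots,k_p}$. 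That single term equals $\binom{(p+1)n}{n,\dots,n}^{-1}\V_n(K)^{p+1}$ by Lemma \ref{lem:exact}, where $|\det T|=1$ because the subspaces $\iota_i\RR^n$ are orthogonal.

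\emph{Step 3 (conclusion).} If $K$ is full-dimensional, divide by $\V_n(K)>0$; this yields \eqref{eq:higherG} with some constant. If $K$ is lower-dimensional, the right-hand side of \eqref{eq:higherG} is zero. The left-hand side is also zero, since all the bodies lie in translates of $(\aff K-\aff K)^p$, a proper subspace of $(\RR^n)^p$; alternatively this follows by continuity.

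The main obstacle I expect is the constant bookkeeping: the factor $(p+1)^{n/2}$, the determinant of $Q|_{E^\perp}$, and the various binomial and multinomial coefficients must combine exactly to $\frac{(n!)^p}{(pn)!}\binom{n}{k_0,\dots,k_p}$. A clean way to avoid computing $\det Q|_{E^\perp}$ directly is to calibrate with a trivial case. Take $k_0=n$, so the other $k_i=0$. There equality holds both in \eqref{eq:higherG}, by Remark \ref{rem:reduction}, and in the monotonicity step, because then $\Delta_{p+1}K$ is paired only with $\iota_1K,\dots,\iota_pK$ and projection kills the difference. This case pins down $c_1$ without computing it. As a sanity check, the case $p=1$ should reproduce \eqref{eq4}.
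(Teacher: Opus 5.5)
Your proposal is correct and is essentially the paper's proof. Both work in $\RR^{(p+1)n}$: rewrite the left-hand side via the projection formula, bound it by monotonicity using $\Delta_{p+1}K\subseteq K^{p+1}$, and evaluate the single surviving term with Lemma~\ref{lem:exact}. The only difference is cosmetic: the paper first applies the shear $\Phi(x_0,\dots,x_p)=(x_0,x_1+x_0,\dots,x_p+x_0)$ with $\det\Phi=1$, which turns $\Delta_{p+1}K$ into the coordinate copy $\widetilde\iota_0K$, so no constants appear; in your version the factors $(p+1)^{n/2}$ and $|\det(Q|_{E^\perp})|=(p+1)^{n/2}$ simply cancel, and your calibration with $k_0=n$ correctly yields $c_1=\binom{(p+1)n}{n}^{-1}$.
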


\begin{proof}
If $\V_n(K)=0$, then both sides of \eqref{eq:higherG} are equal to zero. We may thus assume  $\V_n(K)>0$. For brevity, we will denote $N:=(p+1)n$. Let $\widetilde\iota_0,\dots,\widetilde\iota_p:\RR^n\to\RR^N$ be the canonical inclusions. Write $\RR^N=\RR^n\times\RR^{pn}$  and denote by $\pi:\RR^N\to\RR^{pn}$ the projection to the second factor. Consider further the linear map $\Phi:\RR^N\to \RR^N$ given by
$$
\Phi(x_0,\dots,x_p):=(x_0,x_1+x_0,\dots,x_p+x_0).
$$
Then we have $\det\Phi=1$ and $\Phi^{-1}(x_0,\dots,x_p)=(x_0,x_1-x_0,\dots,x_p-x_0)$. Moreover,
$$
\Phi\circ\widetilde\iota_0=\Delta_{p+1},\qquad \pi\circ\Phi^{-1}\circ\widetilde\iota_0=-\Delta_p,\qquad\text{and}\qquad \pi\circ\Phi^{-1}\circ\widetilde\iota_l=\iota_l,\quad l=1,\dots,p.
$$
Consequently, \eqref{eq:mv_covariance} and \eqref{eq:MVprojection} yield
\begin{align}
\label{eq:higherGproof1}
\begin{split}
    &\V_N\big(\Delta_{p+1}K[n],\widetilde\iota_0K[n-k_0],\dots,\widetilde\iota_pK[n-k_p]\big)\\
    &\quad=\V_N\big(\Phi(\widetilde\iota_0K)[n],\widetilde\iota_0K[n-k_0],\dots,\widetilde\iota_pK[n-k_p]\big)\\
    &\quad=\V_N\left(\widetilde\iota_0K[n],\Phi^{-1}(\widetilde\iota_0K)[n-k_0],\dots,\Phi^{-1}(\widetilde\iota_pK)[n-k_p]\right)\\
    &\quad={N\choose n}^{-1}\V_n(K)\,\V_{pn}\big(-\Delta_p K[n-k_0],\iota_1 K[n-k_1],\dots,\iota_p K[n-k_p]\big).
    \end{split}
\end{align}
On the other hand, using the monotonicity of mixed volumes, $K^{p+1}=\sum_{l=0}^p\widetilde\iota_lK$, and Lemma~\ref{lem:exact}, we have
\begin{align}
\label{eq:higherGproof2}
\begin{split}
    &\V_N\big(\Delta_{p+1}K[n],\widetilde\iota_0K[n-k_0],\dots,\widetilde\iota_pK[n-k_p]\big)\\
    &\quad\leq\V_N\big(K^{p+1}[n],\widetilde\iota_0K[n-k_0],\dots,\widetilde\iota_pK[n-k_p]\big)\\
    &\quad={n\choose k_0,\dots,k_p}\V_N\big(\widetilde\iota_0K[n],\dots,\widetilde\iota_pK[n]\big)\\
    &\quad={n\choose k_0,\dots,k_p}\frac{(n!)^{p+1}}{N!}\V_n(K)^{p+1}.
    \end{split}
\end{align}
Since $\V_n(K)>0$, \eqref{eq:higherG} follows at once by comparing \eqref{eq:higherGproof1} and \eqref{eq:higherGproof2}.
\end{proof}

\section{Reduction to the face lattice condition}
\label{sec4}

In this section we prove Proposition \ref{prop1.6} which subsumes Proposition \ref{prop1.3} as a special case. We first prove the local projection formula and then use the support of mixed area measures to obtain the face lattice condition in Proposition \ref{prop1.6}.
Since mixed volumes are translation invariant and $K$ is assumed to be full-dimensional in Theorem \ref{thm1.1}, we always assume $K\in \calK^n_o$.

\subsection{The local projection formula for mixed area measures}\label{sec4.1}
In order to prove a local version of the projection formula \eqref{eq:exact}, we will need two simple lemmas.

\begin{lemma}
\label{lem:detP}
For nonzero $u, v\in\RR^n$, we have
$$
\left|\det\left(P_{u^\perp}|_{v^\perp}\right)\right|
=
\frac{|\langle u,v\rangle|}{\|u\|\|v\|}.
$$
\end{lemma}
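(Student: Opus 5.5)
We compute the determinant of the linear map $P_{u^\perp}|_{v^\perp}\colon v^\perp\to u^\perp$, both spaces being $(n-1)$-dimensional and equipped with orthonormal bases. By homogeneity we first normalize and assume $\|u\|=\|v\|=1$. If $u=\pm v$ the map is the identity on $u^\perp$ and the formula holds. If $\langle u,v\rangle=0$, then $u\in v^\perp$ and $P_{u^\perp}u=0$, so the determinant vanishes, again as claimed. So the main case is $u,v$ linearly independent and not orthogonal.

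In that case we reduce to a two-dimensional computation. Set $W=\operatorname{span}(u,v)$; then $W^\perp$ is an $(n-2)$-dimensional subspace contained in both $u^\perp$ and $v^\perp$, and $P_{u^\perp}$ acts as the identity on it. Moreover $v^\perp=W^\perp\oplus\RR e$ and $u^\perp=W^\perp\oplus\RR f$, where $e\in W$ is a unit vector orthogonal to $v$ and $f\in W$ is a unit vector orthogonal to $u$. Since $P_{u^\perp}$ maps $W$ into $W\cap u^\perp=\RR f$, in these adapted orthonormal bases the matrix of $P_{u^\perp}|_{v^\perp}$ is block diagonal. It consists of the identity on $W^\perp$ and the scalar $\langle P_{u^\perp}e,f\rangle=\langle e,f\rangle$.

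It remains to compute $|\langle e,f\rangle|$ inside the plane $W$. Rotating by $90^\circ$ within $W$ maps $v$ to $\pm e$ and $u$ to $\pm f$, and this rotation preserves inner products. Hence $|\langle e,f\rangle|=|\langle u,v\rangle|$. Undoing the normalization gives the factor $\|u\|\|v\|$ in the denominator.

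An alternative route is to compute $\det(I-uu^t)$ restricted appropriately, or to view the map as $v^\perp\to\RR^n\to\RR^n/\RR u$, compose it with the volume form $\det(u,\cdot)$, and compare with $\det(v,\cdot)$. The block decomposition above seems cleanest, though. The only mild obstacle is keeping track of the degenerate cases and the sign conventions, which the absolute value absorbs.
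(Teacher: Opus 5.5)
Your proof is correct and follows essentially the same route as the paper. Both split off $\operatorname{span}(u,v)^\perp$, on which $P_{u^\perp}$ is the identity, and then compute the scaling factor $|\langle u,v\rangle|/(\|u\|\|v\|)$ on the one-dimensional piece $v^\perp\cap\operatorname{span}(u,v)$: you obtain it via the $90^\circ$ rotation in the plane, while the paper simply states the norm ratio (and your separate orthogonal case is already covered by the main computation).
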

\begin{proof}
We may assume $u$ and $v$ are not parallel, otherwise the claim is trivial. 
Decompose $\RR^n = \operatorname{span}(u,v) \oplus (u^\perp\cap v^\perp)$.
The projection  $P_{u^\perp}|_{v^\perp}$ is the identity on $u^\perp\cap v^\perp$ while for $x\in v^\perp\cap \operatorname{span}(u,v)$ we have $\|P_{u^\perp}x\| =\frac{|\langle u,v\rangle|}{\|u\|\|v\|}\|x\|$. Therefore, $\left|\det\left(P_{u^\perp}|_{v^\perp}\right)\right|
= \frac{|\langle u,v\rangle|}{\|u\|\|v\|}$.
\end{proof}

\begin{lemma}
\label{lem4.1}
Let $E\subseteq \RR^n$ be a linear subspace, $C\subseteq E$ a convex body whose support function is $C^2$  on $E\setminus\{0\}$, and $u\in \mathbb S^{n-1}$ such that $P_E u\neq 0$. Consider the decomposition $u^\perp=(E\cap u^\perp)\oplus W$, where $W$ is the orthogonal complement of  $E\cap u^\perp$ inside $u^\perp$. Then
$$
D^2 h_C(u) = \frac{1}{\|P_E u\|}D^2_E h_C\left(\frac{P_E u}{\|P_E u\|}\right)\oplus \mathbf{0}_W,
$$
where $D^2_E h_C$ is first computed inside $E$ and then naturally embedded into $\RR^n$.
    
\end{lemma}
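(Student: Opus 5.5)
The approach is to exploit the identity $h_C(x)=h_C(P_E x)$ for all $x\in\RR^n$, valid because $C\subseteq E$. Write $g:=h_C|_E$. By hypothesis $g$ is $C^2$ on $E\setminus\{0\}$, and since $P_E u\neq 0$ the function $h_C=g\circ P_E$ is $C^2$ near $u$. The chain rule for a linear map gives the full Euclidean Hessian
\[
\nabla^2 h_C(u)=P_E^{t}\,\nabla^2_E g(P_E u)\,P_E ,
\]
where $\nabla^2_E g$ is the Hessian computed inside $E$. So the quadratic form of $\nabla^2 h_C(u)$ at $x$ is $\langle \nabla^2_E g(P_Eu)P_Ex,P_Ex\rangle$. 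Because $g$ is $1$-homogeneous, $\nabla^2_E g$ is $(-1)$-homogeneous, so
\[
\nabla^2_E g(P_E u)=\|P_E u\|^{-1}\nabla^2_E g(e),\qquad e:=P_Eu/\|P_Eu\|.
\]
Moreover $\nabla^2_E g(e)$ annihilates $e$, again by homogeneity. Hence it equals $D^2_E h_C(e)$ on $e^\perp\cap E$ and vanishes in the $e$-direction.

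Next, restrict this form to $u^\perp$ and identify its pieces under the splitting $u^\perp=(E\cap u^\perp)\oplus W$.

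First I will check that $E\cap u^\perp=E\cap e^\perp$. For $x\in E$ we have $\langle x,u\rangle=\langle x,P_Eu\rangle$, which gives the equality. On this subspace $P_E x=x$, so the form agrees with $\|P_Eu\|^{-1}D^2_E h_C(e)$.

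Next I will show that $P_E W\subseteq \RR e$. If $w\in W$, then $w\perp E\cap u^\perp$, so $P_E w\perp E\cap e^\perp$ inside $E$. This forces $P_E w\in\RR e$. Since $\nabla^2_E g(e)e=0$, any term involving a vector of $W$ vanishes, both the diagonal $W$-block and the cross terms.

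Combining these, the restriction to $u^\perp$ is block diagonal. It equals $\|P_Eu\|^{-1}D^2_Eh_C(e)$ on $E\cap u^\perp$ and $\mathbf{0}$ on $W$, which is the claimed formula.

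The main point needing care is the identification of $W$, namely that $P_E W$ is contained in $\RR e$. This is the only place where the choice of decomposition matters, and it is what makes the cross terms vanish. The rest is the chain rule together with homogeneity of the Hessian.
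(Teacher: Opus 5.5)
Your proof is correct and follows the same route as the paper: the chain rule for $h_C=h_C\circ P_E$ giving $\nabla^2 h_C(u)=\|P_Eu\|^{-1}P_E^t\,\nabla^2_E h_C(e)\,P_E$ via $(-1)$-homogeneity, followed by restriction to $u^\perp$. The paper compresses the restriction into the single formula $P_{E\cap u^\perp}^t D^2_E h_C(e) P_{E\cap u^\perp}$, and your checks ($E\cap u^\perp=E\cap e^\perp$, $P_EW\subseteq\RR e$, and $\nabla^2_E h_C(e)e=0$) are exactly the details that justify that step.
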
 

\begin{proof}
    Since $C\subseteq E$, it holds that $h_C(x) = h_C(P_E x)$. Therefore, by the chain rule and the $(-1)$-homogeneity of $\nabla^2_E h_C$,
    \begin{equation}
        \label{eq20}
    \nabla^2 h_C(u) =  P_E^t \nabla^2_E h_C(P_E u) P_E =\frac{1}{\|P_E u\|}P_E^t \nabla^2_E h_C\left(\frac{P_E u}{\|P_E u\|}\right)P_E
    \end{equation}
    where $\nabla^2_E h_C$ is the Euclidean Hessian of $h_C$ computed inside $E$.
    Now restricting \eqref{eq20} to $u^\perp$, we obtain 
    $$
    D^2h_C(u) = \frac{1}{\|P_E u\|}P_{E\cap u^\perp}^t D^2_E h_C\left(\frac{P_E u}{\|P_E u\|}\right)P_{E\cap u^\perp},
    $$
    which is exactly the desired formula.
\end{proof}

We now proceed to the local projection formula for mixed area measures, see \eqref{eq25} below. Although the formula  looks cumbersome,
the core of the proof is simply the projection formula \eqref{eq:MDprojection} for mixed discriminants and the main technical difficulty lies in computing the Jacobian. Similar formulas have appeared in  \cite[Theorem~10.1]{SvH20} for $p=1$ and  \cite[Corollary~4.17]{HMU26} for  mixed Monge--Ampère measures.

Fix a direction $w\in \mathbb S^{n-1}$ and subspaces $E_0,\dots, E_p\subseteq\RR^n$ such that $\RR^n=E_0+\cdots+E_p$, $w\in E_i$ for all $i$, and $V_i:=E_i\cap w^\perp$ are linearly independent. 
    Define 
    $$
    J: w^\perp\rightarrow  V_0\oplus \cdots \oplus  V_p,\qquad Jx := (P_{V_i}x)_{i=0}^p.
    $$
    Let $\mathbb S^{n-1}_+:=\{x\in \SS^{n-1}: \langle x,w\rangle > 0\}$ and $\mathbb S^{E_i}_+ := \SS^{n-1}_+\cap E_i$, and consider the diffeomorphism 
    $$
    \pi : \mathbb S^{n-1}_+\rightarrow 
     \mathbb S^{E_0}_+ \times \cdots \times \mathbb S^{E_p}_+ , \qquad \pi (u): = (u_0,\dots,u_p),\quad\text{where}\quad u_i:=\frac{P_{E_i}u}{\|P_{E_i}u\|}.
    $$
    Finally, write $d_i := \dim E_i$ and observe that  $\sum_{i=0}^p d_i=n+p$.
    
\begin{proposition}\label{prop:local_proj}
   For any convex bodies $C_i\subseteq E_i$, $i=0,\dots,p$, and any $u\in \mathbb S^{n-1}_+$ we have  
    \begin{align} \label{eq25}
    \begin{split}
    \binom{n-1}{d_0-1,\dots, d_p-1}  & \langle u , w\rangle d\S_{C_0[d_0-1],\dots, C_p[d_p-1]}(u)\\
    = &|\det J| \pi^* \left(\langle u_0 , w\rangle d\S^{E_0}_{C_0}(u_0)\otimes \cdots \otimes \langle u_p , w\rangle d\S^{E_p}_{C_p}(u_p)\right).
   \end{split}
    \end{align}

\end{proposition}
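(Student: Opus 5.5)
We first reduce to smooth bodies and then compare densities. Both sides of \eqref{eq25} are weakly continuous in $(C_0,\dots,C_p)$: the left side because mixed area measures are, and the right side because each $\S^{E_i}_{C_i}$ is, with the pullback under the fixed diffeomorphism $\pi$ and multiplication by the continuous weights preserving weak convergence on compacta of $\SS^{n-1}_+$. So we approximate each $C_i$ within $E_i$ by bodies whose support functions are $C^2$ on $E_i\setminus\{0\}$ (for instance $C_i+\varepsilon B_{E_i}$, smoothed). It then suffices to prove \eqref{eq25} as an identity of densities against $\mathcal H^{n-1}$ at each $u\in\SS^{n-1}_+$.

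On the left we use \eqref{eq:MDrep}. Lemma \ref{lem4.1} gives
\[
D^2h_{C_i}(u)=\|P_{E_i}u\|^{-1}\,D^2_{E_i}h_{C_i}(u_i)\oplus\mathbf 0,
\]
a symmetric form on $u^\perp$ supported on $U_i:=E_i\cap u^\perp$, which has dimension $d_i-1$. The $U_i$ are linearly independent and span $u^\perp$: since $w\notin u^\perp$, the map $x\mapsto x-\frac{\langle x,u\rangle}{\langle w,u\rangle}w$ projects along $w$ and carries $V_i$ isomorphically onto $U_i$. Lemma \ref{lem2.5}, applied in $u^\perp$ with $\sum(d_i-1)=n-1$, then yields
\[
\binom{n-1}{d_0-1,\dots,d_p-1}\D_{n-1}=|\det T_u|^2\prod_i\|P_{E_i}u\|^{-(d_i-1)}\det\bigl(D^2_{E_i}h_{C_i}(u_i)\bigr),
\]
where $T_u x=(P_{U_i}x)_i$. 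By \eqref{eq:MDrep} inside $E_i$, each $\det(D^2_{E_i}h_{C_i}(u_i))$ is the density of $\S^{E_i}_{C_i}$ at $u_i$.

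On the right, the density of the pullback is $|\det J|\,\mathrm{Jac}\,\pi(u)\prod_i\langle u_i,w\rangle$ times the same product of determinants. Since $\langle u_i,w\rangle=\langle u,w\rangle/\|P_{E_i}u\|$ (because $w\in E_i$), the claim reduces to the purely linear-algebraic identity
\[
\langle u,w\rangle\,|\det T_u|^2\prod_i\|P_{E_i}u\|^{-(d_i-1)}=|\det J|\,\mathrm{Jac}\,\pi(u)\,\langle u,w\rangle^{p+1}\prod_i\|P_{E_i}u\|^{-1}.
\]

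Computing this Jacobian is the main obstacle. The plan is to use central (gnomonic) coordinates: parametrize $\SS^{n-1}_+$ by $x\in w^\perp$ via $u=(w+x)/\|w+x\|$, and each $\SS^{E_i}_+$ by $x_i\in V_i$ in the same way. In these charts $\pi$ becomes $x\mapsto(P_{V_i}x)_i=Jx$, because $P_{E_i}(w+x)=w+P_{V_i}x$. This is exactly where the factor $|\det J|$ comes from. The standard density of the gnomonic chart on a $d$-sphere is $\langle u,w\rangle^{d+1}$. Combining these, with $\langle u_i,w\rangle=\langle u,w\rangle/\|P_{E_i}u\|$, gives
\[
\mathrm{Jac}\,\pi(u)=|\det J|\,\langle u,w\rangle^{-n}\prod_i\bigl(\langle u,w\rangle/\|P_{E_i}u\|\bigr)^{d_i}.
\]
It remains to check $|\det T_u|=|\det J|\,\langle u,w\rangle^{-1}\cdot(\text{the matching power of }\|P_{E_i}u\|)$. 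We will do this by relating $T_u$ to $J$ through the oblique projection $w^\perp\to u^\perp$ along $w$, together with the isomorphisms $V_i\to U_i$. Lemma \ref{lem:detP} computes the determinants of these maps as ratios like $\langle u,w\rangle$ and $\langle u_i,w\rangle$. We expect the exponents to balance, since $\sum d_i=n+p$. If a power mismatch appears, we will recheck it by testing the identity on an orthogonal decomposition $E_i$, where everything is explicit.
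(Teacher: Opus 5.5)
Your route works and differs from the paper's, but the target you state at the end has the wrong power of $\langle u,w\rangle$. Your chart formula $\mathrm{Jac}\,\pi(u)=|\det J|\,\langle u,w\rangle^{p}\prod_i\|P_{E_i}u\|^{-d_i}$ is correct. With it, your reduced identity is equivalent to
\[
|\det T_u|=|\det J|\,\langle u,w\rangle^{p}\prod_{i=0}^p\|P_{E_i}u\|^{-1}=|\det J|\,\langle u,w\rangle^{-1}\prod_{i=0}^p\langle u_i,w\rangle .
\]
So the factor $\langle u,w\rangle^{-1}$ must come with $\prod_i\langle u_i,w\rangle$, not with powers of $\|P_{E_i}u\|$ alone. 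As you wrote it, the target is false.

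Your oblique-projection plan gives exactly the correct value. Let $Q\colon w^\perp\to u^\perp$ be the projection along $w$. Then $Q\circ J^t=T_u^t\circ\bigoplus_i Q|_{V_i}$, since both send $(x_i)_i$ to $\sum_i Qx_i$. Moreover $Q$ and $Q|_{V_i}\colon V_i\to U_i$ are inverse to $P_{w^\perp}|_{u^\perp}$ and $P_{w^\perp}|_{U_i}$. By Lemma \ref{lem:detP} (applied inside $E_i$ for the second), these have determinants $\langle u,w\rangle$ and $\langle u_i,w\rangle$.

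There is an even shorter way. From $D\pi(u)x=(P_{U_i}x/\|P_{E_i}u\|)_i$ you get $\mathrm{Jac}\,\pi(u)=|\det T_u|\prod_i\|P_{E_i}u\|^{-(d_i-1)}$. Comparing this with your chart formula yields $|\det T_u|$ with no further work.

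The paper proceeds differently. It first treats mutually orthogonal $V_i$, where $|\det J|=1$. There it computes $|\det T|$ via the block-diagonal map $P_{w^\perp}\circ T^t$, and the Jacobian of $\pi$ from $D\pi(u)$. It then reduces the oblique case to this one using an auxiliary linear map $A$ (fixing $w$, isometric on each $E_i$, making the $AV_i$ orthogonal) together with the covariance formula \eqref{eq:area-covariance}.

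In your gnomonic charts, $\pi$ becomes the linear map $J$ itself. This handles the oblique configuration in one pass and shows directly where $|\det J|$ comes from, so you need neither $A$ nor the covariance bookkeeping. The cost is the chart density $\langle u,w\rangle^{d+1}$, plus the check (which you supply) that the $U_i$ still split $u^\perp$ when the $V_i$ are not orthogonal. That check is exactly what Lemma \ref{lem2.5} needs in your setting.
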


\begin{proof}
We may assume that each $C_i$ is full-dimensional in $E_i$ and $h_{C_i}$ is $C^2$  in $E_i\setminus\{0\}$. The general case follows by approximation, using the weak continuity of mixed area measures with respect to Hausdorff convergence. For the proof we will denote $c_{d_0,\dots,d_p}:=\binom{n-1}{d_0-1,\ldots,d_p-1}$.

Let us first consider the case where the subspaces $V_i$ are mutually orthogonal. Then $|\det J|=1$. By \eqref{eq:MDrep},
for $u\in\mathbb S^{n-1}_+$ we have
$$
\frac{d\S_{C_0[d_0-1],\ldots,C_p[d_p-1]}}{d\mathcal H^{n-1}}(u)
=
\D_{n-1}\bigl(
D^2h_{C_0}(u)[d_0-1],\ldots,D^2h_{C_p}(u)[d_p-1]
\bigr).
$$
Applying Lemmas \ref{lem2.5} and \ref{lem4.1} to the right-hand side, we obtain
$$
c_{d_0,\dots,d_p}
\D_{n-1}\bigl(
D^2h_{C_0}(u)[d_0-1],\ldots,D^2h_{C_p}(u)[d_p-1]
\bigr)=
|\det T|^2
\prod_{i=0}^p
\frac{\det\left(
D^2_{E_i}h_{C_i}(u_i)
\right)}{\|P_{E_i}u\|^{d_i-1}},
$$
where $T:u^\perp\to\bigoplus_{i=0}^p(E_i\cap u^\perp) $ is defined by $Tx=(P_{E_i\cap u^\perp}x)_{i=0}^p$.
We claim that
\begin{align}
\label{eq:detT}
|\det T|=\frac{\langle u,w\rangle^p}{\prod_{i=0}^p\|P_{E_i}u\|}.
\end{align}
Indeed, the adjoint $T^t$ of $T$ is the addition map $T^t(x_0,\dots, x_p)=x_0+\cdots+ x_p$ and the linear map $P_{w^\perp}\circ T^t:\bigoplus_{i=0}^p(E_i\cap u^\perp)\to\bigoplus_{i=0}^p V_i$ is block diagonal. Since $w\in E_i$, the $i$-th block is the orthogonal projection $P_{w^\perp}:(P_{E_i}u)^\perp\to w^\perp$, where the orthogonal complements are taken inside $E_i$, and hence has determinant $|\det P_{w^\perp}|=\frac{\langle P_{E_i}u,  w\rangle}{\|P_{E_i}u\|}=\frac{\langle u,  w\rangle}{\|P_{E_i}u\|}$ by Lemma \ref{lem:detP}. Therefore, 
 $$
\left|\det(P_{w^\perp}\circ T^t)\right|=\prod_{i=0}^p\frac{\langle u,w\rangle}{\|P_{E_i}u\|}
=\frac{\langle u,w\rangle^{p+1}}{\prod_{i=0}^p\|P_{E_i}u\|}.
$$
At the same time, Lemma \ref{lem:detP} also directly implies
$$
\left|\det(P_{w^\perp}\circ T^t)\right|=\left|\det (P_{w^\perp}|_{u^\perp})\right||\det (T)| = \langle u,w\rangle|\det T|.
$$
Combining the two equations shows \eqref{eq:detT}. Consequently,
\begin{align}
\label{eq:LPF1}
c_{d_0,\dots,d_p} d\S_{C_0[d_0-1],\ldots,C_p[d_p-1]}(u)=\langle u,w\rangle^{2p}\prod_{i=0}^p\frac{\det\left(
D^2_{E_i}h_{C_i}(u_i)
\right)}{\|P_{E_i}u\|^{d_i+1}}d\mathcal H^{n-1}(u).
\end{align}
It remains to compute the Jacobian of $\pi$. To this end, for any $x\in u^\perp$ we have
$$
D\pi(u)x=\left(\frac{P_{E_i\cap u^\perp}x}{\|P_{E_i}u\|}\right)_{i=0}^p.
$$
That is, $D\pi(u)$ is the product of $T$ and a block diagonal matrix and thus \eqref{eq:detT} implies
$$
|\det D\pi(u)|
=|\det T|\prod_{i=0}^p\|P_{E_i}u\|^{-(d_i-1)}
=\frac{\langle u,w\rangle^p}{\prod_{i=0}^p\|P_{E_i}u\|^{d_i}}.
$$
Since $\langle u_i,w\rangle=\frac{\langle u,w\rangle}{\|P_{E_i}u\|}$, the change of variables formula and \eqref{eq:MDrep} inside each $E_i$ yield
\begin{align}
\label{eq:LPF2}
&\pi^*\left(\bigotimes_{i=0}^p\langle u_i,w\rangle\,d\S^{E_i}_{C_i}(u_i)\right)=
\langle u,w\rangle^{2p+1}\prod_{i=0}^p\frac{\det\left(D^2_{E_i}h_{C_i}(u_i)\right)}{\|P_{E_i}u\|^{d_i+1}}
\,d\mathcal H^{n-1}(u).
\end{align}
Comparing \eqref{eq:LPF1} with \eqref{eq:LPF2} proves the proposition in the orthogonal case.

If the subspaces $V_i=E_i\cap w^\perp$ are not necessarily orthogonal, choose an invertible linear map $A:\RR^n\to\RR^n$ fixing $w$, whose restriction to each $E_i$ is an isometry, and such that the subspaces $A(E_i\cap w^\perp)$ are mutually orthogonal.
Since $A|_{w^\perp}\circ J^t$ is an isometry, we have $|\det A||\det J|=1$. For 
$u\in\SS^{n-1}_+$ set $\phi(u):=\frac{A^{-t}u}{\|A^{-t}u\|}$. 
By \eqref{eq:area-covariance} and $\langle\phi(u),w\rangle=\frac{\langle u,w\rangle}{\|A^{-t}u\|}$, we obtain
\begin{equation}
\label{eq22}
\langle\phi(u),w\rangle\,d\left(\phi^*\S_{AC_0[d_0-1],\dots,AC_p[d_p-1]}\right)(u)
=|\det A|\langle u,w\rangle\,d\S_{C_0[d_0-1],\dots,C_p[d_p-1]}(u).
\end{equation}
Let $\pi_A(u):=\left(\frac{P_{AE_iu}}{\|P_{AE_iu}\|}\right)_{i=0}^p$ denote the projection map for the subspaces $AE_i$. Then, by the orthogonal case, the left-hand side of \eqref{eq22} is equal to
$$
c_{d_0,\dots,d_p}^{-1}\phi^*\pi_A^*\left(\bigotimes_{i=0}^p\langle v_i,w\rangle\,d\S^{AE_i}_{AC_i}(v_i)\right)=c_{d_0,\dots,d_p}^{-1}\pi^*\left(\bigotimes_{i=0}^p\langle u_i,w\rangle\,d\S^{E_i}_{C_i}(u_i)\right),
$$
where we used $\pi_A\circ\phi=(A,\dots,A)\circ\pi$, which follows from $P_{AE_i}A^{-t}u=AP_{E_i}u$, and \eqref{eq:area-covariance} again for the isometries $A|_{E_i}$. Now \eqref{eq25} follows at once using $|\det A|^{-1}=|\det J|$.
\end{proof}

\subsection{Proof of Propositions \ref{prop1.3} and \ref{prop1.6}}
We directly prove Proposition \ref{prop1.6}, which includes Proposition \ref{prop1.3} as the special  case $p=1$.
\begin{proof}[Proof of Proposition \ref{prop1.6}]
We will keep the notation from the proof of Proposition \ref{pro:higherGodbersen}. In particular, $N=(p+1)n$, and $\widetilde\iota_0,\dots,\widetilde\iota_p:\RR^n\to\RR^N$ are the canonical inclusions. By \eqref{eq:higherGproof1} and \eqref{eq:higherGproof2}, a full-dimensional convex body $K\in\calK^n$ attains equality in \eqref{eq:higher-godbersen} if and only if
$$
\begin{aligned}
&\V_N\bigl(\Delta_{p+1}K[n],\widetilde\iota_0K[n-k_0],\dots,\widetilde\iota_pK[n-k_p]\bigr)\\
&\qquad=\V_N\bigl(K^{p+1}[n],\widetilde\iota_0K[n-k_0],\dots,\widetilde\iota_pK[n-k_p]\bigr)>0.
\end{aligned}
$$
By Lemma \ref{lem2.11}, this is equivalent to
\begin{equation}
h_{\Delta_{p+1}K}=h_{K^{p+1}}\quad\text{on}\quad\supp\S_{B[n-1],\widetilde\iota_0K[n-k_0],\dots,\widetilde\iota_pK[n-k_p]},
\label{eq10}
\end{equation}
where $B$ is the unit ball in $\RR^N$.

Assume first that \eqref{eq10} holds and fix nonzero $u_0,\dots,u_p\in\RR^n$ such that $\dim T(K,u_i)\leq k_i$. Without loss of generality, we may normalize so that $u:=(u_0,\dots,u_p)\in\mathbb S^{N-1}$.
For each $i$, choose a subspace $E_i\subseteq T(K,u_i)^\perp$ of dimension $n-k_i$ and denote
$$
E:=\left(\bigoplus_{i=0}^p\widetilde\iota_iE_i\right)\oplus\RR u,
\qquad E_i':=\widetilde\iota_iE_i\oplus\RR u.
$$
Then $\dim E=pn+1$, $\dim E_i'=n-k_i+1$, and the subspaces $E_i'\cap u^\perp=\widetilde\iota_iE_i$ are mutually orthogonal.
Moreover, $P_E\widetilde\iota_iK=P_{E_i'}\widetilde\iota_iK$ is full-dimensional in $E_i'$.
By \eqref{eq7} and the choice of $E_i$, we have
$$
T_{E_i'}(P_{E_i'}\widetilde\iota_iK,u)
=T(\widetilde\iota_iK,u)\cap E_i'
=\RR_{\geq0}u.
$$
Indeed, $\widetilde\iota_i x+tu\in T(\widetilde\iota_iK,u)$ with $x\in E_i$ if and only if $x+tu_i\in T(K,u_i)$, which forces $x=0$ and $t\geq0$.
Thus $u$ is a $0$-extreme direction of $P_{E_i'}\widetilde\iota_iK$ for every $i$.
Since the support of the surface area measure is the closure of the $0$-extreme directions \cite[Theorem~4.5.3]{Sch14}, Proposition \ref{prop:local_proj} gives
$$
u\in\supp\S^E_{P_E\widetilde\iota_0K[n-k_0],\dots,P_E\widetilde\iota_pK[n-k_p]}.
$$
Now $\dim E^\perp=n-1$.
By Corollary \ref{cor2.9}, we obtain
$$
u\in\supp\S_{B[n-1],\widetilde\iota_0K[n-k_0],\dots,\widetilde\iota_pK[n-k_p]}.
$$
By \eqref{eq10}, $h_{\Delta_{p+1}K}(u)=h_{K^{p+1}}(u)$, that is,
$$
\max_{x\in K}\sum_{i=0}^p\langle x,u_i\rangle
=\sum_{i=0}^p\max_{x\in K}\langle x,u_i\rangle.
$$
Any maximizer on the left-hand side thus belongs to $\bigcap_{i=0}^p \{x\in K: \langle x, u_i\rangle = h_K(u_i)\}$.
Choose such a point $x$ and a unit vector $w\in\relint N(K,x)$.
Then $N(K,x)=T(K,w)$ contains every $u_i$, and hence every $T(K,u_i)$, which completes the proof for the ``only if'' direction.

For the ``if'' direction, assume that, conversely, the condition on touching cones holds.
By \cite[Corollary 1.9]{vHW25}, in order  to prove \eqref{eq10}, it suffices to show $h_{\Delta_{p+1}K}(u)=h_{K^{p+1}}(u)$ for every $(B[n-1],\widetilde\iota_0K[n-k_0],\dots,\widetilde\iota_pK[n-k_p])$-extreme direction $u\in\mathbb S^{N-1}$. Let $u=(u_0,\dots,u_p)$ be such a direction. Then, by definition, $\dim T(\widetilde\iota_iK,u)^\perp\geq n-k_i$ for all $i$. Since
$$
T(\widetilde\iota_iK,u)^\perp=\widetilde\iota_i\left(T(K,u_i)^\perp\right),
$$
we have $\dim T(K,u_i)\leq k_i$.
Applying the assumption on touching cones, we obtain $w\in\RR^n$ such that $T(K,w)$ contains every $T(K,u_i)$.
Fix $x\in K$ with $\langle x,w\rangle=h_K(w)$.
Then for every $i$, $u_i\in T(K,w)\subseteq N(K,x)$ and, in particular, $h_K(u_i)=\langle x,u_i\rangle$.
Consequently,
$$
h_{\Delta_{p+1}K}(u)\geq\sum_{i=0}^p\langle x,u_i\rangle=\sum_{i=0}^p h_K(u_i)=h_{K^{p+1}}(u).
$$
Since the reverse inequality holds for any $u\in\SS^{N-1}$ because $\Delta_{p+1}K\subseteq K^{p+1}$, this finishes the proof.
\end{proof}

\section{The two face characterization of simplices}
\label{sec5}

In this section we prove Theorem~\ref{thm:equivalent} by inductively reducing the incidence condition (b) to the case $k=1$.
This will complete the proof of Theorem~\ref{thm1.1} through Proposition~\ref{prop1.3}.

Throughout the section, we  write $\aff(A,B):=\aff (A\cup B)$ and $\conv(A,B):=\conv (A\cup B)$ for the affine and convex hull, respectively, of nonempty sets
$A,B\subseteq\RR^n$. Moreover, when $B=\{x\}$ is a singleton, we write simply $\conv(A,x)$ instead of $\conv(A,\{x\})$. Since we will repeatedly work with the condition (b) of Theorem~\ref{thm:equivalent}, it will be convenient to capture it by a definition.

\begin{definition}\label{def:Ipq}
Let $K\in\calK^n$, and let $p,q\geq0$ be integers.
We say that $K$ satisfies property $(I_{p,q})$ if for any two faces $F$ and $G$ of $K$ with $\dim F\leq p$ and $\dim G\leq q$, the face $F\vee G$ is proper.
\end{definition}


\subsection{Induction on $k$}
We will show that any $K\in\calK^n$ with property $(I_{k-1,n-k-1})$, where $2\leq k\leq n-1$, also satisfies $(I_{k-2,n-k})$.
More precisely, we assume that for any faces $F$ and $G$ with $\dim F\leq k-1$ and $\dim G\leq n-k-1$, $F\vee G$ is a proper face.
We will show that under this assumption, for any faces $F$ and $G$ with $\dim F\leq k-2$ and $\dim G \leq n-k$, $F\vee G$ is a proper face.
Note that we are imposing a stronger dimension condition  on $F$ and a weaker dimension condition on $G$.

The key observation for the induction step is that any proper face $G'$ of $G$ has dimension at most $\dim G-1 \leq n-k-1$, which allows one to apply $(I_{k-1,n-k-1})$ to $F$ and $G'$.
This gives a proper face $F\vee G'$ containing $F$ and meeting $G$.
The difficulty here is that, \textit{a priori}, one has no control on the dimension of $F\vee G'$.
Therefore, $\dim (F\vee G')$ can take any value between $\dim F$ and $n-1$.
The following lemma deals with the case where $\dim (F\vee G')\leq k-1$ by taking $T = F\vee G'$.

\begin{lemma}\label{lem:T}
Suppose that $K\in\calK^n$ satisfies $(I_{k-1,n-k-1})$, where $1\leq k\leq n-1$.
Let $F$ and $G$ be  faces of $K$ with $\dim G=n-k$.
If there is a face $T$ of $K$ such that $F\subseteq T$, $\dim T\leq k-1$, and $T\cap G\neq\emptyset$, then $F\vee G$ is proper.
\end{lemma}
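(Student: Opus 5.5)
The plan is to show that $T\vee G$ is a proper face. This suffices, because $F\subseteq T$ gives $F\vee G\subseteq T\vee G$, and a face contained in a proper face is proper. The only tool needed is $(I_{k-1,n-k-1})$, applied to $T$ and the proper faces of $G$. These have dimension at most $\dim G-1=n-k-1$, while $\dim T\leq k-1$, so $T\vee G'$ is proper for every proper face $G'$ of $G$.

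First dispose of an easy case. Suppose $T\cap\relint G\neq\emptyset$. Then Lemma \ref{lemma2.1} gives $G\subseteq T$, so $T\vee G=T$. This face has dimension at most $k-1<n$, hence is proper (when $K$ is full-dimensional; otherwise one argues with $\dim K$ in the same way). So from now on assume $T\cap G\subseteq\relbd G$, and fix $x\in T\cap G$. Then $x\in\relbd G$, and in particular $\dim G\geq1$ since $n-k\geq 1$.

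Now argue by contradiction: suppose $T\vee G=K$. Pick $t\in\relint T$, $g\in\relint G$, and any $z\in(t,g)$. By Lemma \ref{lemma2.2}, $z\in\relint(T\vee G)=\relint K$. Since $x\in\relbd G$ and $g\in\relint G$, we have $x\neq g$. Extend the ray from $x$ through $g$ until it leaves $G$; by compactness it does so at a point $g'\in\relbd G$ with $g=\mu g'+(1-\mu)x$ for some $\mu\in(0,1]$. Let $G'$ be a proper face of $G$ containing $g'$, for instance $F(G,g')$, which is proper by Lemma \ref{lem:minimal-face}.

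Writing $z=\lambda t+(1-\lambda)g$, we get
\[
z=\lambda t+(1-\lambda)(1-\mu)x+(1-\lambda)\mu g'.
\]
Both $t$ and $x$ lie in $T$, so $z$ is a convex combination of a point of $T$ and the point $g'\in G'$. Hence $z\in T\vee G'$, which is a proper face of $K$ by the first paragraph. But a proper face is contained in $\relbd K$, contradicting $z\in\relint K$. Therefore $T\vee G$ is proper, and so is $F\vee G$.

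The main obstacle I expect is exactly this rerouting step. It expresses an interior point of $T\vee G$ as lying in the join of $T$ with a lower-dimensional face of $G$, so that the hypothesis applies. The key choice that makes it work is pushing $g$ away from a point $x\in T\cap G$ towards the relative boundary of $G$. Care is needed for the degenerate case $\mu=1$ and for the reduction to $x\in\relbd G$, which is what the first case handles.
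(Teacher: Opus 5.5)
Your proof is correct and is essentially the paper's argument: the same case split on whether $T$ meets $\relint G$, and the same key step of pushing $g\in\relint G$ away from a point of $T\cap\relbd G$ to a point $g'\in\relbd G$, so that $(I_{k-1,n-k-1})$ applies to $T$ and the proper face of $G$ through $g'$. The paper finishes directly rather than by contradiction: since $g\in T\vee F(K,g')$ and $g\in\relint G$, Lemma~\ref{lemma2.1} gives $G\subseteq T\vee F(K,g')$, so Lemma~\ref{lemma2.2} is not needed. One small correction to your first case: for lower-dimensional $K$, a dimension count alone does not show $T$ is proper, so get this from $(I_{k-1,n-k-1})$ applied to $T$ and one of its extreme points.
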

\begin{proof}
Assume such $T$ exists. If $T\cap\relint G\neq \emptyset$, then $G\subseteq T$ by Lemma~\ref{lemma2.1}, and the conclusion follows.
Otherwise, choose $y\in T\cap \relbd G$ and $g\in\relint G$. Then there exists  $z\in\relbd G$ such that $g\in(y,z)$. 
Since $z\in\relbd G$, $F(K,z)$ is a proper face of $G$, and hence $\dim F(K,z)\leq n-k-1$.
Property $(I_{k-1,n-k-1})$ then implies that $T\vee F(K,z)$ is proper. Since $y,z\in T\vee F(K,z)$, we have $g\in \relint G\cap\big( T\vee F(K,z)\big)$, and Lemma~\ref{lemma2.1} gives $G\subseteq T\vee F(K,z)$.
As $F\subseteq T\vee F(K,z)$ as well, $F\vee G$ is a face of $T\vee F(K,z)$, which is proper.
Therefore $F\vee G$ is a proper face of $K$.
\end{proof}

We are therefore left with the case where $\dim(F\vee G')\geq k$.

\begin{lemma}
\label{lem:2cases}
Let $K\in\calK^n$ be a full-dimensional convex body. Let $F$ and $G$ be proper faces of $K$ satisfying the following properties: 
\begin{align}
\label{eq:subface_property_F}
       \text{if } F'\text{ is a proper face of }F,\text{ then }F'\vee G\text{ is a proper face of }K;\\
\label{eq:subface_property_G}
       \text{if } G'\text{ is a proper face of }G,\text{ then }F\vee G'\text{ is a proper face of }K.
    \end{align}
Then  one of the following holds:
\begin{enuma}
\item $F\vee G $ is proper;
\item $F$ and $G$ are affinely independent, $K\cap \aff(F,G)=\conv(F,G)$, and for any extreme point $y\in G$ one has
\begin{align}
\label{eq:convFy}
(F\vee y)\cap\aff(F,G)=\conv(F,y).    
\end{align}
\end{enuma}
\end{lemma}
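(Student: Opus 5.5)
The plan is to assume that (a) fails, so $F\vee G=K$, and derive the three conclusions in (b). Fix $f\in\relint F$ and $g\in\relint G$. By Lemma~\ref{lemma2.2}, $(f,g)\subseteq\relint(F\vee G)=\interior K$. The basic tool is the following "boundary transfer" principle, which turns the hypotheses \eqref{eq:subface_property_F} and \eqref{eq:subface_property_G} into geometry.

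Take any $f'\in\relbd F$ and any $g'\in G$, and suppose some point $x\in(f',g')$ lies in $\interior K$. Then $F(K,f')\vee F(K,g')$ contains an interior point, so it equals $K$. Since $F(K,f')$ is a proper face of $F$ and $F(K,g')\subseteq G$, this makes $F(K,f')\vee G=K$, contradicting \eqref{eq:subface_property_F}. Symmetrically, by \eqref{eq:subface_property_G}, every open segment joining a point of $F$ to a point of $\relbd G$ lies in $\partial K$. So segments from $\relint F$ to $\relint G$ are interior, while segments with at least one endpoint on the relative boundary stay on $\partial K$. All three conclusions should come from producing one point that must lie on both kinds of segment.

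\emph{Affine independence.} Suppose $F$ and $G$ are affinely dependent. Then $\aff F$ and $\aff G$ either intersect or share a direction. Concretely, there are points $f_1\in\relint F$, $g_1\in\relint G$ and a nonzero vector $d$ with $f_1+sd\in\aff F$ and $g_1-\lambda sd\in\aff G$ for some $\lambda\ge0$ (the intersecting and the parallel cases are treated alike after an affine reparametrisation). Moving along $d$, I will choose parameters so that the endpoint on the $F$ side reaches $\relbd F$ while the endpoint on the $G$ side stays in $\relint G$. A parallelogram or midpoint identity of the form $\tfrac12\bigl((f+sd)+g\bigr)=\tfrac12\bigl(f+(g+sd)\bigr)$, rescaled to allow unequal weights, then exhibits a single point lying on a segment from $\relint F$ to $\relint G$ and also on a segment from $\relbd F$ to $G$. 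By the two facts above, this point is both interior and boundary, a contradiction. Getting the parameters right, so that one endpoint is exactly on $\relbd F$ while the other stays in $G$, is the step I expect to be fiddly.

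\emph{$K\cap\aff(F,G)=\conv(F,G)$.} By affine independence, every $x\in\aff(F,G)$ has a unique representation $x=\alpha a+\beta b$ with $a\in\aff F$, $b\in\aff G$ and $\alpha+\beta=1$. Suppose $x\in K\setminus\conv(F,G)$. Then either $\alpha\notin[0,1]$, or $a\notin F$, or $b\notin G$. In each case I will form the convex combination of $x$ with a point of $(f,g)$, which lies in $\interior K$ and so keeps the combination in $\interior K$. Sliding toward $x$, the $F$-coordinate (or $G$-coordinate) of this combination exits $F$ (or $G$) through its relative boundary. At that moment I obtain a point of $\interior K$ that lies on a segment from $\relbd F$ to $G$, or from $F$ to $\relbd G$, contradicting the principle. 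The case $\alpha\notin[0,1]$ is handled the same way: continue the segment past $F$ or past $G$, and use that $K\cap\aff(F,G)$ near $F$ is squeezed between the boundary segments emanating from $\relbd F$.

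\emph{The identity \eqref{eq:convFy}.} Let $y$ be an extreme point of $G$. Then $\{y\}$ is a proper face of $G$ (if $\dim G=0$ then $G=\{y\}$, and the claim is the previous step with $F\vee y=F\vee G$). Hence $F\vee y$ is proper by \eqref{eq:subface_property_G}. Since $F\vee y\subseteq K$, the previous step gives $(F\vee y)\cap\aff(F,G)\subseteq\conv(F,G)$, and clearly $\conv(F,y)\subseteq F\vee y$. It remains to exclude points $x=\alpha a+\beta b\in F\vee y$ with $b\in G\setminus\{y\}$ and $\beta>0$. For such $x$, the face $F\vee y$ would contain $x$ together with $F$. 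Since $F\vee y$ is a face and $x$ lies in the relative interior of a segment from $a\in F$ to $b$, it would contain $b$. It would also contain $y$, and then, by extending the segment through $b$ and $y$ inside $G$ and using that $F\vee y$ is a face, a point of $\relint G$. By Lemma~\ref{lemma2.1} this gives $G\subseteq F\vee y$, so $F\vee y\supseteq F\vee G=K$, contradicting properness.
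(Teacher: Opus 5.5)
The boundary-transfer principle matches the paper's observation that $\pi(\relbd\widehat Q)\subseteq\partial K$, where $\widehat Q\subseteq\RR^{2n+1}$ is the join of $F$ and $G$ and $\pi(a,b,t)=a+b$. Your sliding argument for $K\cap\aff(F,G)=\conv(F,G)$ is essentially the paper's argument in join coordinates.

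The genuine gap is in the last step of \eqref{eq:convFy}. From $b,y\in(F\vee y)\cap G$ with $b\neq y$, you conclude that $F\vee y$ contains a point of $\relint G$ by ``extending the segment through $b$ and $y$ inside $G$''. This fails whenever $b$ and $y$ lie in a common proper face of $G$. Then $(F\vee y)\cap G$ is a proper face of $G$ containing $[y,b]$, so the whole segment lies in $\relbd G$. Extending it past $b$ leaves $G$, and extending it past $y$ leaves $G$ at once because $y$ is extreme.

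Apart from this step, your argument only uses that $F\vee y$ is a proper face of $K$ containing $F\cup\{y\}$, and that cannot suffice. Take a tetrahedron $K=\conv(x_0,x_1,x_2,x_3)$ with $F=\{x_0\}$, $G=\conv(x_1,x_2,x_3)$ and $y=x_1$; these satisfy all hypotheses. The facet $H=\conv(x_0,x_1,x_2)$ is a proper face containing $F\cup\{y\}$ and $b=x_2$, yet $H\cap\aff(F,G)=H\neq\conv(F,y)$. So the minimality of $F\vee y$ has to be used.

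The missing ingredient is Lemma~\ref{lemma2.2}: for $u\in\relint F$, the point $p=\tfrac12(u+y)$ lies in $\relint(F\vee y)$. Since $y-b$ is a direction of both $F\vee y$ and $\aff(F,G)$, for small $\epsilon>0$ the point $p+\epsilon(y-b)=\tfrac12u+\tfrac12\bigl(y+2\epsilon(y-b)\bigr)$ lies in $(F\vee y)\cap\aff(F,G)\subseteq\conv(F,G)$. By affine independence, in a representation $\alpha a+(1-\alpha)g'$ with $a\in\aff F$, $g'\in\aff G$ and $0<\alpha<1$, the weight and both points are unique. Hence $y+2\epsilon(y-b)\in G$, so $y$ lies in the open segment between $b$ and $y+2\epsilon(y-b)$, contradicting extremality. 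The paper says this more compactly: $(F\vee y)\cap\aff(F,G)$ and $\conv(F,y)$ are faces of $\conv(F,G)$ whose relative interiors share $p$, so they coincide by Lemma~\ref{lemma2.1}.

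A lesser issue is in the affine-independence step. Your parametrization $f_1+sd\in\aff F$, $g_1-\lambda sd\in\aff G$ only covers the case where $\aff F$ and $\aff G$ share a direction. If they meet in a point $z$ with no common direction, it forces $\lambda=0$, so the translation identity is unavailable. No affine change of coordinates makes intersecting subspaces parallel. There you need homotheties centred at $z$ instead of translations. Alternatively, use the paper's uniform device: move from $\widehat o$ along a nonzero direction of $\aff\widehat Q$ lying in $\ker\pi$ until you hit $\relbd\widehat Q$. The image stays at $o\in\interior K$, yet it must lie in $\partial K$.
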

\begin{proof}

We fix $u\in\relint F$ and $v\in\relint G$ for the rest of the proof and put $o:=(u+v)/2$.
Then we have $o\in \relint \conv(F,G)$. If $o\in\partial K$, we can find a proper face $H$ of $K$ containing $o$. Then $u,v\in H$, and Lemma~\ref{lemma2.1} implies that $H$ must also contain $F$ and $G$. Therefore $F\vee G\subseteq  H$ is a proper face of $K$, and item (a) holds in this case. 
In the following we will assume $o\in \operatorname{int} K$ and aim to prove the second claim. We will denote $S:=\aff(F, G)$ and $Q:=\conv(F, G)$.

To prove that $F$ and $G$ are affinely independent, we must show $\dim S=\dim F+\dim G+1$. To this end, we consider the convex body
$$
\widehat Q:=\left\{((1-t)f,tg,t):f\in F,\ g\in G,\ 0\leq t\leq1\right\}\in\calK^{2n+1},
$$
i.e., the convex hull of $F\times\{0\}\times\{0\}$ and $\{0\}\times G\times\{1\}$, and the linear map $\pi:\RR^n\times\RR^n\times\RR\to\RR^n$ given by $\pi(a,b,t):=a+b$. Observe that $\pi(\widehat Q)=Q$, hence $\pi(\aff\widehat Q)=S$. Because $\dim \aff\widehat Q=\dim F+\dim G+1$, it suffices to show that $\pi|_{\aff\widehat Q}$ is injective.

We will first show that $\pi(\relbd\widehat Q)\subseteq\partial K$. We have
$$
\relbd \widehat Q=R_1\cup R_2\cup \left(\bigcup_{f\in\relbd F}R_{3,f}\right)\cup\left(\bigcup_{g\in\relbd G}R_{4,g}\right),
$$
where 
\begin{align*}
    R_1&:=F\times\{0\}\times\{0\},\\
    R_2&:=\{0\}\times G\times\{1\},\\
    R_{3,f}&:=\left\{\big((1-t)f,tg,t\big):0<t<1, \,g\in G\right\},\quad f\in\relbd F,\\
    R_{4,g}&:=\left\{\big((1-t)f,tg,t\big):0<t<1,\, f\in F\right\}, \quad g\in\relbd G.
\end{align*}
Since  $F$ and $G$ are proper faces of $K$, $\pi(R_1)$ and $\pi(R_2)$ are contained in $\partial K$. Since every $f\in\relbd F$ lies in a proper face $F'$ of $F$, \eqref{eq:subface_property_F} implies $\pi(R_{3,f})\subseteq F'\vee G\subseteq\partial K$. Similarly, using \eqref{eq:subface_property_G}, we get $\pi(R_{4,g})\subseteq\partial K$ for every $g\in\relbd G$. 
We thus conclude $\pi(\relbd\widehat Q)\subseteq\partial K$.

Consider the point $\widehat o:=(u/2,v/2,1/2)\in\relint\widehat Q$. It satisfies  $\pi(\widehat o)=o$.
Fix an arbitrary nonzero  $w\in \RR^n\times \RR^n\times \RR$ such that $\widehat o+ w\in \aff\widehat Q$.
Then the ray  $\{\widehat o+tw$: $t\in (0,+ \infty)\}$ intersects the relative boundary of $\widehat Q$.
That is,  there is $r>0$ such that $\widehat o+rw\in\relbd\widehat Q$.
Using $\pi(\relbd\widehat Q)\subseteq\partial K$, we obtain $\pi(\widehat o+rw) \in \partial K$.
But $\pi(\widehat o) = o  \in \operatorname{int} K$.
Therefore, $\pi(w) = r^{-1}(\pi(\widehat o+rw) - \pi(\widehat o))\neq 0$.
Since $w$ was chosen arbitrarily,  $\pi$ is injective on $\aff\widehat Q$.
We have thus proven that $F$ and $G$ are affinely independent.

For the  claim $K\cap S = Q$, note first that $K\cap S\supseteq Q$ holds because $S = \aff Q$.
For the reverse inclusion, assume for contradiction that there exists $x\in (K\cap S)\setminus Q$.
In particular,   $x\in \aff Q\setminus Q $.
Since $o\in \relint Q$, the open segment $(o,x)$ intersects $\relbd Q\subseteq\partial K$ at some point $y$.
However, since $o\in \operatorname{int} K$ and $x\in K$, we have $(o,x)\subseteq \operatorname{int} K$, a contradiction.
We thus conclude  $K\cap S = Q$.

Finally, choose an extreme point $y\in G$ and set $D:=F\vee y$. To prove \eqref{eq:convFy}, observe first that both sides are faces of $\conv (F,G)$. Indeed, on the one hand, $\conv(F,y)\subseteq\conv (F,G)$ is a face since $y$ is an extreme point of $G$.
On the other hand, $D$ is a face of $K$, so $D\cap S$ is a face of $K\cap S=\conv(F,G)$. Next, put $p:=(u+y)/2$.
Since $y\notin\aff F$, we have $p\in\relint \conv(F,y)$. By Lemma~\ref{lemma2.2}, $p$ also lies in $\relint D$ and hence in $\relint(D\cap S)$. Consequently, $\relint (D\cap S) \cap \relint \conv(F,y)\neq \emptyset$. Now Lemma \ref{lemma2.1} applied twice implies $D\cap S= \conv(F,y)$, as claimed.
\end{proof}

\begin{lemma}\label{lemma4.5}
Let $2\leq k\leq n-1$, and suppose that $K\in\calK^n$ satisfies $(I_{k-1,n-k-1})$.
Let $F$ and $G$ be faces of $K$ with $\dim F=k$ and $\dim G=n-k$, such that $F\cap G=\{y\}$ for some $y\in K$ and $\aff(F, G)=\RR^n$.
Then $\conv(F,G)=K$.
\end{lemma}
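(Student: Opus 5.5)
The plan is to run the boundary argument from the end of the proof of Lemma~\ref{lem:2cases}. Put $Q:=\conv(F,G)\subseteq K$. Since $\aff(F,G)=\RR^n$, the body $Q$ is full-dimensional, so $\interior Q\subseteq\interior K$. Fix $o\in\interior Q$.

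It suffices to show $\partial Q\subseteq\partial K$. Granting this, suppose $x\in K\setminus Q$. The open segment $(o,x)$ lies in $\interior K$, because $o\in\interior K$ and $x\in K$. It must also cross $\partial Q$, which would then be a point of $\partial K$. This is a contradiction, so $K=Q$.

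To describe $\partial Q$, translate so that $y=0$. Then $F\subseteq A:=\aff F$ and $G\subseteq B:=\aff G$ are linear subspaces of dimensions $k$ and $n-k$. Since $F\cap G=\{0\}$ and $\aff(F,G)=\RR^n$, we get $\RR^n=A\oplus B$. Because $0\in F\cap G$, we have
$$
Q=\bigcup_{0\le\lambda\le1}\bigl(\lambda F+(1-\lambda)G\bigr),
$$
and the decomposition $q=a+b$ with $a\in A$, $b\in B$ is unique. A point $q=\lambda f+(1-\lambda)g$ is interior to $Q$ if it can be written with $f\in\relint F$, $g\in\relint G$ and $0<\lambda<1$. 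So every boundary point of $Q$ falls into one of three cases:
\begin{enuma}
\item $q\in F\cup G$;
\item $q\in\conv(F',G)$ for a proper face $F'$ of $F$;
\item $q\in\conv(F,G')$ for a proper face $G'$ of $G$.
\end{enuma}
Case (a) is immediate, since $F$ and $G$ are proper faces of $K$. Cases (b) and (c) are symmetric: property $(I_{k-1,n-k-1})$ is the same as $(I_{n-k-1,k-1})$ with the roles of $F$ and $G$ and of $k$ and $n-k$ exchanged. So the whole proof reduces to showing that $F'\vee G$ is a proper face of $K$ whenever $F'$ is a proper face of $F$. The case $F'=\emptyset$ does not arise.

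For this step I would use Lemma~\ref{lem:T} with $F'$ in the role of $F$ and $G$ (of dimension $n-k$) in the role of $G$. This requires a face $T$ with $F'\subseteq T$, $\dim T\le k-1$ and $T\cap G\neq\emptyset$. If $y\in F'$, take $T=F'$; then $T\cap G\ni y$ and $\dim T\le k-1$, so Lemma~\ref{lem:T} gives properness.

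The main obstacle is the case $y\notin F'$, where $F'$ is a face of $F$ away from $y$. Here $F'\vee y$ may be all of $F$, and Lemma~\ref{lem:T} does not apply directly. I would first try to avoid this case by choosing the representation of $q$ well. Write $q=\lambda f+(1-\lambda)g$ with $\lambda$ maximal; then $f$ should lie in a proper face of $F$ containing $y$ whenever $f\in\relbd F$, since moving $f$ away from $y$ within $F$ and adjusting $\lambda$ is possible otherwise. This needs checking carefully with the cone structure of $F$ at $y$.

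If that reduction fails, the fallback is to apply Lemma~\ref{lem:2cases} to $F'$ and $G$. These are disjoint, and their subface hypotheses \eqref{eq:subface_property_F}, \eqref{eq:subface_property_G} would come from $(I_{k-1,n-k-1})$ and from the already-settled case of faces through $y$, by induction on $\dim F'$. Alternative (b) of that lemma, affine independence of $F'$ and $G$, would then have to be ruled out or shown to still put $\conv(F',G)$ inside $\partial K$.
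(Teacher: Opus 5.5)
Your outer argument (reducing to $\partial Q\subseteq\partial K$) matches the paper. The gap is in the reduction after it: the statement you aim for is false.

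Take $n=3$, $k=2$, $y=0$, and let $K=\conv(0,v_1,v_2,v_3)$ be a simplex, which satisfies $(I_{1,0})$. Let $F=\conv(0,v_1,v_2)$ and $G=[0,v_3]$. The point $q=\frac14v_1+\frac14v_2+\frac12v_3$ lies in $\partial Q$. By your unique splitting $q=a+b$, its only representation is $q=\frac12f+\frac12g$ with $f=\frac12(v_1+v_2)\in\relint[v_1,v_2]$ and $g=v_3$.

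Hence the only proper face $F'$ of $F$ with $q\in\conv(F',G)$ is $[v_1,v_2]$, which misses $y$, and $[v_1,v_2]\vee G=K$. Likewise the only usable $G'$ is $\{v_3\}$, and $F\vee\{v_3\}=K$. So neither case (b) nor case (c) certifies $q\in\partial K$. Indeed $\conv([v_1,v_2],G)=K$, so these cases contain interior points of $Q$, and no argument can show $F'\vee G$ is proper when $y\notin F'$.

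Both of your repairs fail on exactly this part of $\partial Q$:
\begin{enuma}
\item The maximal-$\lambda$ idea gains nothing. The admissible $\lambda$ form the interval $[\lambda_F(a),1-\lambda_G(b)]$, where $\lambda_F,\lambda_G$ are the gauges of $F,G$ from $0$. Where $\lambda_F(a)+\lambda_G(b)=1$ the representation is unique, with $f=a/\lambda_F(a)$ on the part of $\relbd F$ away from $y$.
\item In the fallback, Lemma~\ref{lem:2cases} does apply to $F'=[v_1,v_2]$ and $G$, and its alternative (b) genuinely occurs, with $\conv(F',G)=K$.
\end{enuma}

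The missing idea is to put $f$ and $g$ on the relative boundaries simultaneously, and then apply the hypothesis to that pair. Do not pair a subface of one body with the whole of the other.

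First, $0\in\relbd F\cap\relbd G$; otherwise Lemma~\ref{lemma2.1} would force $F\subseteq G$ or $G\subseteq F$. So you can rescale radially from $0$. Suppose $q=tf_0+(1-t)g_0\in\partial Q$ with $0<t<1$ and, say, $f_0\in\relint F$. A proper face $H$ of $Q$ through $q$ contains $F\ni 0$ and $g_0$, so $[0,g_0]\subseteq H\cap G\subseteq\relbd G$. Push $f_0$ out along its ray to a point $f\in\relbd F$, and shrink $g_0$ toward $0$ accordingly to a point $g$; then $q$ stays on $[f,g]$.

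Now $\dim F(K,f)\le k-1$ and $\dim F(K,g)\le n-k-1$. So $(I_{k-1,n-k-1})$, applied directly to $F(K,f)$ and $F(K,g)$, puts $[f,g]$, and hence $q$, in a proper face of $K$. In the example this is the pair $[v_1,v_2]$ and $\{v_3\}$, whose join is the facet opposite $y$. Lemma~\ref{lem:T} is not needed at all.
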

\begin{proof}
Translate $K$ so that $y=0$. Then $U:=\aff F$ and $V:=\aff G$ are complementary linear subspaces, that is, $\RR^n=U\oplus V$. In particular, $\dim\conv(F,G)=n$. Since $\conv(F,G)\subseteq K$ it suffices to show $\partial (\conv(F,G))\subseteq\partial K$.

Observe that we in fact have $0\in\relbd F\cap\relbd G$.
Indeed, if for instance $0\in \relint F$, Lemma \ref{lemma2.1} implies that $F\subseteq G$, and  hence $U$ and $V$ cannot be linearly independent.

We claim that every point $z\in\partial (\conv(F,G))$ lies in a segment $[f,g]$ with $f\in\relbd F$ and $g\in\relbd G$.
The case $z=0$ is immediate, since $0\in\relbd F\cap\relbd G$.
If $z\in F\setminus\{0\}$, the ray from $0$ through $z$ intersects $\relbd F$ at $f$.
Then $z\in[0,f]$ as required.
The case $z\in G\setminus\{0\}$  follows similarly.
Otherwise, write $z=tf_0+(1-t)g_0$ with $0<t<1$, $f_0\in F$, and $g_0\in G$.
If $f_0\in \relbd F$ and $g_0\in \relbd G$, we are already done.
Hence we assume for example $f_0\in \relint F$.
Let $H$ be a proper face of $\conv(F,G)$ containing $z$.
Then $f_0,g_0\in H$, and in particular $f_0 \in H\cap \relint F$.
By  Lemma~\ref{lemma2.1},  $F\subseteq H$.
So $0\in H$ and $H\cap G$ is a proper face of $G$.
Extend the ray from $0$ through $f_0$ to $f\in\relbd F$, and write $f_0=\rho f$ with $0<\rho<1$.
Set $g:=\frac{1-t}{1-t\rho}g_0$.
Then $g\in[0,g_0]\subseteq H\cap G\subseteq\relbd G$ and $z=t\rho f+(1-t\rho)g$. This shows the existence of $f\in \relbd F,\, g\in \relbd G $ with $z\in [f,g]$ in this case. The case $g_0\in\relint G$ is analogous.

Fix $z\in \partial (\conv(F,G))$ and choose $f,g$ as above.
Since $f\in \relbd F$, we have $\dim F(K,f)\leq\dim F-1 = k-1$.
Similarly, $\dim F(K,g)\leq n-k-1$.
Property $(I_{k-1,n-k-1})$ therefore implies that $f,g$ are contained in a proper face of $K$.
In particular, $z\in [f,g]\subseteq F(K, \{f,g\})\subseteq\partial K$.
Since $z$ was chosen arbitrarily, we conclude $\partial (\conv(F,G))\subseteq\partial K$.

\end{proof}

\begin{lemma}\label{lemma4.6}
Let $1\leq k\leq n-1$, and let $D,G\in \calK^n$ with $\dim D=k$ and $\dim G=n-k$, such that $0 \in D\cap G$ and $D\cup G$ spans $\RR^n$.
Let $F$ be a face of $D$ with $0 \notin F$.
Then there is a face $T$ of $\conv(D,G)$ containing $F$, meeting $G$, and satisfying $\dim T=\dim F+1$.
\end{lemma}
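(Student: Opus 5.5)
The plan is to build $T$ as the convex hull of $F$ with a single well-chosen point of $G$, and to verify the face property by an explicit decomposition of $C:=\conv(D,G)$. Put $U:=\aff D$ and $V:=\aff G$. Both contain $0$ and $D\cup G$ spans $\RR^n$, so $U$ and $V$ are linear subspaces of dimensions $k$ and $n-k$ with $U\oplus V=\RR^n$. Every point of $C$ can be written as $x=(1-t)d+tg$ with $d\in D$, $g\in G$ and $t\in[0,1]$. Its components are $P_Ux=(1-t)d\in U$ and $P_Vx=tg\in V$, where $P_U,P_V$ are the projections of this direct sum. So $x\in C$ if and only if $P_Ux\in sD$ and $P_Vx\in(1-s)G$ for some $s\in[0,1]$. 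Since $0\in D\cap G$, the sets $sD$ increase with $s$. The idea is to read off faces of $C$ from the gauge $\|\cdot\|_D$ of $D$ on $U$ and the gauge $\|\cdot\|_G$ of $G$ on $V$: $C=\{a+b:\ \|a\|_D+\|b\|_G\leq1\}$, with the usual convention for gauges when $0$ lies on the relative boundary.

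\emph{Case 1: $0\notin\aff F$.} I take $T:=\conv(F,0)$. It meets $G$ at $0$, contains $F$, and has dimension $\dim F+1$. To see that it is a face of $C$, suppose $x_1,x_2\in C$ have a common open segment point in $T$, say $z=\lambda f$ with $f\in F$ and $\lambda\in[0,1]$. Using the decomposition, I would show that $P_Vx_1=P_Vx_2=0$, because $z$ already uses the full budget $\|z\|_D\leq1$ with $P_Vz=0$. Then $P_Ux_1,P_Ux_2$ lie in $D$, and the face property of $F$ in $D$, together with the fact that $\conv(F,0)\cap D$ is the cone over $F$ truncated by $D$, forces $x_1,x_2\in T$. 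The delicate point is the case $\|z\|_D<1$, i.e.\ $\lambda f$ with $\lambda<1$. Here I would use that $F$ is a face of $D$ not containing $0$: this should imply that $F$ lies in $\relbd D$ and that $\RR_{\geq0}F\cap D=\conv(F,0)$.

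\emph{Case 2: $0\in\aff F$, with $0\notin F$.} Then $\conv(F,0)$ has the same dimension as $F$, so I must use a point $g\in G\setminus\{0\}$ instead. I choose $g$ to be an extreme point of $G$ that is exposed by a linear functional on $V$, and I take $T:=\conv(F,g)$. Since $g\in V\setminus\{0\}$ and $\aff F\subseteq U$, the point $g$ is not in $\aff F$, so $\dim T=\dim F+1$, and clearly $T\cap G\ni g$. To prove that $T$ is a face, I would pair a supporting functional of $D$ along $F$ with one exposing $g$ in $G$. Combined through the direct sum $U\oplus V$, these should give a functional on $\RR^n$ whose maximal set on $C$ is $\conv(F',g)$ for some face $F'\supseteq F$ of $D$. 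I would then cut this down to $T$ using the face property of $F$ inside $D$ and the extremality of $g$.

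The main obstacle is that $F$ need not be an exposed face of $D$. So the supporting-functional argument in Case 2, and the gauge-budget argument in Case 1, must be carried out for a general face and not only an exposed one. My first attempt would be to pass to a chain of iterated exposed faces $D=F_0\supset F_1\supset\dots\supset F_m=F$ and induct along the chain, using that a face of a face is a face. At each step I would apply the argument above inside the lower-dimensional configuration $\conv(F_j,\,\cdot\,)$.
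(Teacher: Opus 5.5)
There is a genuine gap, and it is in the case you treat as the main one. In Case 1 the set $T=\conv(F,0)$ is in general \emph{not} a face of $C=\conv(D,G)$. Take $n=3$, $k=2$, $D=\conv\{0,e_1,e_2,e_1+e_2\}$, $G=[0,e_3]$, $F=\{e_1+e_2\}$. Here $0\notin\aff F$, and $0$ is even extreme in both $D$ and $G$, as in the paper's application. But $\conv(F,0)$ is a diagonal of the square base of the pyramid $C$. Its midpoint is also the midpoint of $[e_1,e_2]$, and $e_1,e_2\notin T$. The failing step is your claim that the face property of $F$ in $D$, together with $\RR_{\geq0}F\cap D=\conv(F,0)$, forces $x_1,x_2\in T$. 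That identity is true, but it does not make $\conv(F,0)$ a face of $D$, let alone of $C$. The hypotheses also allow $0\in\relint D$ (e.g.\ $D=[-e_1,e_1]$, $G=[0,e_2]$, $F=\{e_1\}$). In that situation every face of $C$ containing $0$ contains all of $D$, so $\conv(F,0)$ can never be a face. Moreover, your Case 2 is empty. For any face $F$ of $D$ one has $\aff F\cap D=F$, so $0\in D\cap\aff F$ already forces $0\in F$. The construction that actually works was therefore assigned only to a vacuous case.

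The fix is to always take $T=\conv(F,w)$ with $w$ any nonzero extreme point of $G$; exposedness is not needed. Then $\dim T=\dim F+1$, since $w\in V\setminus\{0\}$ and $\aff F\subseteq U$. The face property can be checked directly for an arbitrary, possibly non-exposed, face $F$. This makes the supporting functionals and the chain of iterated exposed faces unnecessary. Write $z=tf+(1-t)w=\lambda x_1+(1-\lambda)x_2$ with $x_i=\alpha_id_i+(1-\alpha_i)g_i$, compare $U$- and $V$-components, and put $r:=\lambda\alpha_1+(1-\lambda)\alpha_2$.
\begin{itemize}
\item If $r<t$, the $U$-equation exhibits $f$ as a convex combination of $d_1,d_2,0$ with positive weight on $0\in D$, so $0\in F$, a contradiction.
\item If $r>t$, the $V$-equation exhibits $w$ as a convex combination of $g_1,g_2,0$ with positive weight on $0\in G$, contradicting extremality of $w\neq0$.
\end{itemize}
Hence $r=t$. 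Dividing the two component equations by $t$ and $1-t$ when these are positive, you get $d_i\in F$ whenever $\alpha_i>0$ (face property of $F$ in $D$) and $g_i=w$ whenever $\alpha_i<1$ (extremality of $w$ in $G$). Thus $x_i\in\conv(F,w)$. This is precisely where the hypotheses $0\in D\cap G$ and $0\notin F$ are used.
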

\begin{proof}
Put $C:=\conv(D,G)$, $U:=\aff D$, and $V:=\aff G$.
Since $0\in D\cap G$, the hypotheses give $\RR^n=U\oplus V$.
Choose a nonzero extreme point $w$ of $G$, and set $T:=\conv(F,w)$.
Our choice of $w$ implies $w\in T\cap G$ and $w\notin U$, so $\dim T=\dim F+1$.

To verify the face property, suppose $z\in T$ and $z=\lambda x_1+(1-\lambda)x_2$, where $x_1,x_2\in C$, $0<\lambda<1$. We need to show that $x_1,x_2\in T$.

Write $x_i=\alpha_i d_i+(1-\alpha_i)g_i$, where $d_i\in D$, $g_i\in G$, and $0\leq\alpha_i\leq1$, for $i=1,2$. Put $r:=\lambda\alpha_1+(1-\lambda)\alpha_2$. 
Since $z\in T$, we also have $z=t f+(1-t)w$, for some $f\in F$ and $0\leq t\leq1$.
Together, we have
$$
t f+(1-t)w =z = \lambda (\alpha_1 d_1+(1-\alpha_1)g_1)+(1-\lambda)(\alpha_2 d_2+(1-\alpha_2)g_2).
$$
Taking the components in $U$ and $V$ respectively gives
\begin{align}
\label{eq11}
t f
&=\lambda\alpha_1d_1+(1-\lambda)\alpha_2d_2,\\
(1-t)w
&=\lambda(1-\alpha_1)g_1+(1-\lambda)(1-\alpha_2)g_2.
\label{eq12}
\end{align}
We claim that this implies $r=t$. Indeed, if $r<t$, then \eqref{eq11} implies
$$
f
=\frac{\lambda\alpha_1}{t}d_1
+\frac{(1-\lambda)\alpha_2}{t}d_2
+\left(1-\frac rt\right)0.
$$
Hence $f$ is a convex combination of points where $0$ has a positive coefficient.
Since $f\in F$, we have $0\in F$, a contradiction. If $r>t$, then \eqref{eq12} implies
\[
w=\frac{\lambda(1-\alpha_1)}{1-t}g_1
+\frac{(1-\lambda)(1-\alpha_2)}{1-t}g_2
+\frac{r-t}{1-t}0.
\]
However, $w$ is an extreme point, hence cannot be written as a nontrivial convex combination of points in $G$, a contradiction. We thus have $r=t$ and will distinguish three cases.

First, if $t=0$, then $r=0$ forces $\alpha_1=\alpha_2=0$. Thus $x_1,x_2\in G$, and \eqref{eq12} together with extremality of $w$ in $G$ gives $x_1=x_2=w\in T$.

Second, if $t=1$, then $r=1$ forces $\alpha_1=\alpha_2=1$. Thus $x_1,x_2\in D$, and \eqref{eq11} together with the face property of $F$ in $D$ gives $x_1,x_2\in F\subseteq T$.

Finally, assume $0<t<1$. Then by \eqref{eq11} and \eqref{eq12} we have the convex combinations
\begin{align}
\label{eq11convex}
f &=\frac{\lambda\alpha_1}{t}d_1 +\frac{(1-\lambda)\alpha_2}{t}d_2,\\
w&=\frac{\lambda(1-\alpha_1)}{1-t}g_1 +\frac{(1-\lambda)(1-\alpha_2)}{1-t}g_2.
\label{eq12convex}
\end{align}
If $\alpha_i>0$, then \eqref{eq11convex} together with the face property of $F$ in $D$ implies $d_i\in F$. If $\alpha_i<1$, then \eqref{eq12convex} together with extremality of $w$ in $G$ implies $g_i=w$. An easy case distinction then shows that $x_i=\alpha_i d_i+(1-\alpha_i)g_i\in\conv(F,w)$, $i=1,2$, for any $(\alpha_1,\alpha_2)\in[0,1]^2$.
\end{proof}

\begin{lemma}
\label{lem:with_subface}
    Let $2\leq k\leq n-1$ and $0\leq d\leq k-2$. Let $K\in\calK^n$ be a full-dimensional convex body satisfying $(I_{k-1,n-k-1})$. Assume that $F$ and $G$ are faces of $K$ satisfying $\dim F=d$, $\dim G=n-k$, and \eqref{eq:subface_property_F}. Then $F\vee G$ is proper.
\end{lemma}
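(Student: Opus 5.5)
We would argue by contradiction, reducing to Lemma~\ref{lem:T}: it suffices to find a face $T$ of $K$ with $F\subseteq T$, $\dim T\leq k-1$ and $T\cap G\neq\emptyset$. Since $\dim G=n-k<n$ and $\dim F=d<n$, both $F$ and $G$ are proper.

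\emph{Step 1: both subface properties hold.} Property \eqref{eq:subface_property_F} is assumed. For \eqref{eq:subface_property_G}, let $G'$ be a proper face of $G$. Then $\dim G'\leq n-k-1$ and $\dim F=d\leq k-2\leq k-1$, so $(I_{k-1,n-k-1})$ shows that $F\vee G'$ is proper. Lemma~\ref{lem:2cases} therefore applies to $F$ and $G$. If alternative (a) holds we are done, so assume (b). Then $F$ and $G$ are affinely independent, $S:=\aff(F,G)$ has dimension $d+n-k+1\leq n-1$, $K\cap S=\conv(F,G)$, and $(F\vee y)\cap S=\conv(F,y)$ for every extreme point $y$ of $G$.

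\emph{Step 2: a candidate face $D$.} Fix an extreme point $y$ of $G$ and put $D:=F\vee y$. This face is proper by \eqref{eq:subface_property_G} with $G'=\{y\}$. If $\dim D\leq k-1$, Lemma~\ref{lem:T} with $T=D$ finishes the proof, so assume $\dim D\geq k$. By \eqref{eq:convFy} we have $D\cap G\subseteq\conv(F,y)\cap G$. Affine independence of $F$ and $G$ gives $\conv(F,y)\cap G=\{y\}$, hence $D\cap G=\{y\}$. Moreover $y\notin F$, because $F\cap G=\emptyset$ by affine independence.

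\emph{Step 3: the main obstacle.} We would like $D$ (or a suitable replacement) to be a $k$-dimensional face with $\aff(D,G)=\RR^n$. Once this holds, translate $y$ to $0$. Lemma~\ref{lemma4.5} gives $K=\conv(D,G)$. Since $F$ is a face of $D$ with $0\notin F$, Lemma~\ref{lemma4.6} yields a face $T$ of $K$ with $F\subseteq T$, $T\cap G\neq\emptyset$ and $\dim T=d+1\leq k-1$. Lemma~\ref{lem:T} then shows that $F\vee G$ is proper.

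The difficulty is controlling $\dim D$ and showing that the directions of $\aff D$ and $\aff G$ are linearly independent. The minimality of $D=F\vee y$ forbids simply passing to a smaller face of $D$ that contains both $F$ and $y$.

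What we would try first is to run the argument of Lemma~\ref{lem:2cases} on the pair $(D,G)$. Take $o$ to be the midpoint of points in $\relint D$ and $\relint G$.
\begin{itemize}
\item If $o\in\partial K$, then $D\vee G\supseteq F\vee G$ is proper and we are done.
\item If $o\in\interior K$, we would verify the subface hypotheses for $(D,G)$. For a proper face $G'$ of $G$, use $(I_{k-1,n-k-1})$, after lowering $\dim D$ via Lemma~\ref{lem:T} wherever a face of dimension at most $k-1$ appears. For proper faces of $D$, use the fact that $D\cap S=\conv(F,y)$.
\end{itemize}
Lemma~\ref{lem:2cases} would then force $D$ and $G$ to be affinely independent, so $\dim D+\dim G+1\leq n+1$. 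Since $D\cap G=\{y\}\neq\emptyset$, the correct count for faces meeting in a point is $\dim D+(n-k)\leq n$, which gives $\dim D=k$ and $\aff(D,G)=\RR^n$ as required.

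If this verification fails, the fallback is to vary the choice of the extreme point $y\in G$, or to replace $G$ by the face $G\vee y'$ for a suitable point $y'$, until the dimension count closes.
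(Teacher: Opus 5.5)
Your Steps 1 and 2, and the endgame of Step 3 (Lemma \ref{lemma4.5}, then Lemma \ref{lemma4.6}, then Lemma \ref{lem:T}), match the paper's argument. The gap is the step you flag as the main obstacle. You never show that $\dim D\geq k$ forces $\dim D=k$ and $\aff(D,G)=\RR^n$, and the route you sketch cannot show it.

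\textbf{Why the proposed route fails.} Applying Lemma \ref{lem:2cases} to the pair $(D,G)$ requires \eqref{eq:subface_property_F} for that pair, i.e.\ $D'\vee G$ proper for every proper face $D'$ of $D$. But $F$ itself is a proper face of $D$ (since $y\notin F$), so this hypothesis already contains the statement you are trying to prove. The hypothesis \eqref{eq:subface_property_G} for $(D,G)$ is not available either: $(I_{k-1,n-k-1})$ says nothing about $D\vee G'$ once $\dim D\geq k$. The conclusion you draw is also inconsistent. Alternative (b) of Lemma \ref{lem:2cases} would make $D$ and $G$ affinely independent, which is impossible because $D\cap G=\{y\}$. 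So no dimension count ``for faces meeting in a point'' comes out of it. The fallback (varying $y$, enlarging $G$) is not an argument.

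\textbf{The missing idea.} It is a short dimension count using facts you already have. Take $u\in\relint F$. The point $(u+y)/2$ lies in $\relint D$ by Lemma \ref{lemma2.2}, and it lies in $\conv(F,y)=D\cap S\subseteq S$. Hence $D\cap S$ contains a relative neighbourhood of this point in $\aff D\cap S$, so
\[
\dim(\aff D\cap S)=\dim\conv(F,y)=d+1.
\]
Since $\aff D$ and $S$ intersect, this gives
\[
\dim D\leq\dim(D\cap S)+\codim S=(d+1)+n-(d+n-k+1)=k.
\]
So $\dim D\leq k$ always. If $\dim D=k$, the inequality is an equality, which means $\aff D$ and $S$ together span $\RR^n$ affinely. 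Since $F\subseteq D$ and $S=\aff(F,G)$, this gives $\aff(D,G)=\RR^n$. Together with $D\cap G=\{y\}$, which you did prove, this is exactly the input for Lemma \ref{lemma4.5}. Your Step 3 then goes through as written; this count is the paper's \eqref{eq13}.
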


\begin{proof}
    Let $G'$ be a proper face of $G$. Then $\dim G'\leq n-k-1$ and $(I_{k-1,n-k-1})$ implies that $F\vee G'$ is proper. Hence, $F$ and $G$ also satisfy \eqref{eq:subface_property_G} and we may use Lemma \ref{lem:2cases}. If item (a) in Lemma~\ref{lem:2cases} holds, then we are done. 
Therefore, we assume from now on that item (b) holds. Choose an extreme point $y$ of $G$ and denote $D:=F\vee y$. We will also denote $S:=\aff(F,G)$. Then $F$ and $G$ are affinely independent, $K\cap S = \conv (F, G)$, and $D\cap S = \conv(F,y)$.

Property $(I_{k-1,n-k-1})$ implies that $D$ is proper. Moreover, since $F$ and $G$ are affinely independent, we have
\begin{align*}
    \dim S &= \dim F+\dim G+1 = d+n-k+1,\\
    \dim \conv(F,y) &= \dim F+1 = d+1,
\end{align*}
and therefore
\begin{equation}\label{eq13}
\dim D\leq \dim (D\cap S)+\codim(S) = 
\dim\conv(F,y)+n-\dim S=k.
\end{equation}
If $\dim D\leq k-1$, then taking $T=D$ in Lemma~\ref{lem:T} shows that $F\vee G$ is proper as desired.

It remains to treat the case $\dim D=k$. In this case, \eqref{eq13} takes equality, which yields $\aff (D,S)=\RR^n$. Since $F\subseteq D$ and $S=\aff(F,G)$, we have $\aff(D, G)=\RR^n$.
Moreover, $D\cap G=\conv(F,y)\cap G=\{y\}$.
We may therefore apply Lemma~\ref{lemma4.5} to $D,G$, which  gives $K=\conv(D,G)$. Since $F$ is a proper face of $D$ and translating $K$ so that $y=0$, Lemma~\ref{lemma4.6} implies that there is a face $T$ of $K$ containing $F$, intersecting $G$, and satisfying $\dim T=d+1\leq k-1$. Applying Lemma~\ref{lem:T} to this $T$ shows that $F\vee G$ is proper.
\end{proof}

Next we will show by induction that the assumption \eqref{eq:subface_property_F} on the faces $F$ and $G$ in the previous lemma can in fact be omitted.

\begin{lemma}
\label{lem:without_subface}
    Let $2\leq k\leq n-1$ and $0\leq d\leq k-2$. Let $K\in\calK^n$ be a full-dimensional convex body satisfying $(I_{k-1,n-k-1})$. Assume that $F$ and $G$ are faces of $K$ satisfying $\dim F=d$ and $\dim G=n-k$. Then $F\vee G$ is proper.
\end{lemma}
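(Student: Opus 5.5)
We argue by induction on $d=\dim F$, ranging over $0\leq d\leq k-2$, with $G$ a face of dimension $n-k$ held fixed throughout. The aim is to verify the hypothesis \eqref{eq:subface_property_F} of Lemma~\ref{lem:with_subface} and then invoke that lemma.

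For the base case $d=0$, $F=\{x\}$ is a single extreme point. Its only proper face is the empty set, so \eqref{eq:subface_property_F} holds vacuously. (If one insists that faces are nonempty, there are no proper faces to check.) Lemma~\ref{lem:with_subface} then applies directly and gives that $F\vee G$ is proper.

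For the inductive step, suppose $1\leq d\leq k-2$ and that the statement holds for all faces of dimension less than $d$, paired with any face of dimension $n-k$. Let $F$ have dimension $d$ and let $F'$ be a proper face of $F$. Then $F'$ is a face of $K$ with $\dim F'\leq d-1\leq k-2$. The induction hypothesis, applied to $F'$ and the same $G$, shows that $F'\vee G$ is proper. The statement is phrased with $\dim F=d$ exactly, so the induction should be run as strong induction over all smaller dimensions; $\dim F'$ may be any value $0,\dots,d-1$, and each such value is covered. This establishes \eqref{eq:subface_property_F} for $F$ and $G$. Since $0\leq d\leq k-2$ and $\dim G=n-k$, Lemma~\ref{lem:with_subface} yields that $F\vee G$ is proper.

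We also need to check the properness hypotheses implicit in Lemma~\ref{lem:2cases}, which is used inside Lemma~\ref{lem:with_subface}: $F$ and $G$ must be proper faces. This holds because $\dim G=n-k<n$ and $\dim F\leq k-2<n$, and $K$ is full-dimensional.

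There is no real obstacle here. All the geometric content is already contained in Lemma~\ref{lem:with_subface}. The only points requiring care are running the induction on $d$ over all smaller dimensions, and confirming that proper subfaces of $F$ keep dimension at most $k-2$, so that they lie within the range where the induction hypothesis is available.
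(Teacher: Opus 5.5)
Your proposal is correct and follows essentially the same route as the paper: strong induction on $d$, with \eqref{eq:subface_property_F} vacuous for $d=0$ and supplied by the induction hypothesis for $d>0$, after which Lemma~\ref{lem:with_subface} gives the conclusion. Your extra check that $F$ and $G$ are proper faces, which is needed where Lemma~\ref{lem:2cases} is invoked inside Lemma~\ref{lem:with_subface}, is a harmless addition that the paper leaves implicit.
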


\begin{proof}
    We argue by strong induction on $d$. First, if $d=0$, then there is no proper face of $F$. Hence \eqref{eq:subface_property_F} holds trivially and the claim follows immediately from Lemma \ref{lem:with_subface}. Second, fix $d>0$ and assume the statement holds for $0,\dots,d-1$. For every proper face $F'$ of $F$, we have $\dim F'<d$ and hence the induction hypothesis implies that $F'\vee G$ is proper. Thus \eqref{eq:subface_property_F} holds and the claim follows again from  Lemma \ref{lem:with_subface}.
\end{proof}

Now we can finally proceed to the proof of the induction step in Theorem \ref{thm:equivalent}.

\begin{proposition}\label{prop4.8}
Let $K\in\calK^n$ be a full-dimensional convex body and $2\leq k\leq n-1$. If $K$ satisfies $(I_{k-1,n-k-1})$, then it satisfies $(I_{k-2,n-k})$.
\end{proposition}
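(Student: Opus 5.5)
The plan is to derive Proposition~\ref{prop4.8} directly from Lemma~\ref{lem:without_subface}, after reducing to the case where the dimensions are exactly $\dim F\leq k-2$ and $\dim G=n-k$. Fix faces $F$ and $G$ of $K$ with $\dim F\leq k-2$ and $\dim G\leq n-k$; we must show that $F\vee G$ is proper. Set $d:=\dim F$, so $0\leq d\leq k-2$.

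\emph{Case $\dim G\leq n-k-1$.} Since $\dim F\leq k-2\leq k-1$, property $(I_{k-1,n-k-1})$ applies directly to $F$ and $G$ and gives that $F\vee G$ is proper.

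\emph{Case $\dim G=n-k$.} If $G=K$, then $n-k=n$, which is impossible since $k\geq 2$. Hence $G$ is a proper face of $K$. Now $F$ and $G$ satisfy exactly the hypotheses of Lemma~\ref{lem:without_subface}: $2\leq k\leq n-1$, $0\leq d\leq k-2$, $K$ is full-dimensional and satisfies $(I_{k-1,n-k-1})$, $\dim F=d$, and $\dim G=n-k$. That lemma yields that $F\vee G$ is proper.

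Combining the two cases, $K$ satisfies $(I_{k-2,n-k})$. There is essentially no obstacle left: all the geometric work (Lemmas~\ref{lem:T}, \ref{lem:2cases}, \ref{lemma4.5}, \ref{lemma4.6} and the induction on $d$ in Lemma~\ref{lem:without_subface}) has already been done. The only point needing care is the case distinction on $\dim G$, since Lemma~\ref{lem:without_subface} requires $\dim G=n-k$ exactly, while smaller $G$ is handled by the hypothesis itself.
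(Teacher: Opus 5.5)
Your proposal is correct and matches the paper's proof: you split on whether $\dim G\leq n-k-1$, where $(I_{k-1,n-k-1})$ applies directly, or $\dim G=n-k$, where Lemma~\ref{lem:without_subface} with $d=\dim F$ applies. Your extra remark that $G\neq K$ is harmless but unnecessary, since that lemma does not require $G$ to be proper.
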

\begin{proof}
Take any faces $F,G$ of $K$ with $\dim F\leq k-2$ and $\dim G\leq n-k$. If $\dim G\leq n-k-1$, property $(I_{k-1,n-k-1})$ gives that $F\vee G$ is proper. Otherwise, $\dim G=n-k$, and the same conclusion follows from Lemma~\ref{lem:without_subface}, applied with $d=\dim F$. Since $F$ and $G$ were arbitrary, $K$ satisfies $(I_{k-2,n-k})$.
\end{proof}

\subsection{The $k=1$ case}

It remains to prove the base case. Recall that we assume $n\geq2$.

\begin{lemma}\label{lem:induction_base}
Let $K\in\calK^n$ be a full-dimensional convex body. If $K$ satisfies $(I_{0,n-2})$, then $K$ is a simplex.
\end{lemma}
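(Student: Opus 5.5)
The plan is to prove the lemma by induction on $n$. The key geometric step is to show that, for any extreme point $x$ of $K$, the body is a cone $K=\conv(x,G)$ over a single $(n-1)$-dimensional face $G$ not containing $x$. The property $(I_{0,n-2})$ then passes to $G$ inside $\aff G$, so the induction hypothesis makes $G$ a simplex, and hence $K$ is one too.

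Fix an extreme point $x$ of $K$ and an interior point $o$. Let $y$ be the point where the ray from $x$ through $o$ leaves $K$, so $y\in\partial K$ and $o\in(x,y)$. No proper face $H$ contains both $x$ and $y$: otherwise $[x,y]\subseteq H\subseteq\partial K$, contradicting $o\in\interior K$. Set $G:=F(K,y)$. Then $G\neq K$ since $y\in\partial K$. If $\dim G\leq n-2$, then $(I_{0,n-2})$ applied to $\{x\}$ and $G$ gives a proper face containing $x$ and $y$, which is impossible. Hence $\dim G=n-1$, and $x\notin G$ by the same argument. Moreover $x\notin\aff G$: the hyperplane $\aff G$ supports $K$, so if it contained $x$, the segment $[x,y]$ would lie in it and hence in $\partial K$. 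Thus $C:=\conv(x,G)$ is full-dimensional.

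Next I show $\partial C\subseteq\partial K$. The boundary of $C$ consists of $G$ together with the segments $[x,z]$ for $z\in\relbd G$. For such $z$, the face $F(K,z)$ is a proper face of $G$, so it has dimension at most $n-2$. By $(I_{0,n-2})$, the face $x\vee F(K,z)$ is proper, so $[x,z]\subseteq\partial K$. Since $C\subseteq K$ are both full-dimensional convex bodies and $\partial C\subseteq\partial K$, we get $C=K$; this is the same argument as in Lemma~\ref{lemma4.5}. I expect this boundary-containment step to be the crux, and it is settled by the above.

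For the induction step, I would describe the faces of the cone $K=\conv(x,G)$: they are $G$, $\{x\}$, and $\conv(x,F)$ for faces $F$ of $G$. I then verify that $G$, as a full-dimensional body in $\aff G$, satisfies $(I_{0,n-3})$. Let $x'$ be an extreme point of $G$ and $G'$ a face of $G$ with $\dim G'\leq n-3$. Then $\conv(x,G')$ is a face of $K$ of dimension at most $n-2$, so by $(I_{0,n-2})$ the face $x'\vee\conv(x,G')$ is proper. It contains $x$, so it equals $\conv(x,F)$ for some proper face $F$ of $G$, and this $F$ contains both $x'$ and $G'$. The induction hypothesis, or trivially the case $n-1=1$ where $G$ is a segment, then shows $G$ is a simplex. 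Since $x\notin\aff G$, $K=\conv(x,G)$ is a simplex.
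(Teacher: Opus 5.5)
Your proof is correct, but it differs from the paper's in two places.

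\textbf{How you get the pyramid.} To obtain $K=\conv(x,G)$, the paper invokes Lemma~\ref{lem:2cases} with $F=\{x\}$. There \eqref{eq:subface_property_F} is vacuous and \eqref{eq:subface_property_G} follows from $(I_{0,n-2})$. Alternative (b) of that lemma then yields both the affine independence of $x$ and $G$ and $K\cap\aff(x,G)=\conv(x,G)$, via the lifted body $\widehat Q$.

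You argue directly instead. You get $x\notin\aff G$ from the fact that $\aff G$ is a supporting hyperplane, and then run the boundary-containment argument of Lemma~\ref{lemma4.5}. For this special case that is more elementary. The supporting-hyperplane claim deserves one line: a supporting hyperplane at a point of $\relint G$ contains $G$ by Lemma~\ref{lemma2.1}, hence equals $\aff G$.

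\textbf{How you finish.} Your ending is longer than necessary. The paper simply observes that $x$ was an arbitrary extreme point and every other extreme point of $\conv(x,G)$ lies in $G$. Hence no extreme point lies in the affine hull of the others, so $K$ has at most $n+1$ extreme points and is a simplex.

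Your induction on $n$, showing that the facet $G$ inherits $(I_{0,n-3})$, also works. It does, however, rely on the face structure of a pyramid, which you state without proof and slightly misstate. The faces of $\conv(x,G)$ include all faces of $G$, not only $G$ itself. What you actually use is the following, and both facts are easy to check:
\begin{itemize}
\item the faces containing $x$ are exactly $\{x\}$ and $\conv(x,F)$ for faces $F$ of $G$;
\item $\conv(x,F)\cap G=F$, because $x\notin\aff G$.
\end{itemize}

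In short, the paper gains brevity by exploiting the freedom in choosing $x$ and reusing a lemma it needs elsewhere anyway. Your route is self-contained at this step, but pays with an extra induction and the pyramid face-lattice facts.
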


\begin{proof}
Let $x$ be an arbitrary extreme point of $K$ and put $F:=\{x\}$. Choose $p\in\interior K$ and let $b\in\partial K$ be the (unique) point such that $p\in[b,x]$. Denote $G:=F(K,b)$.
Then $F\vee G$ contains $p\in \interior K$ and thus $F\vee G=K$.
Since $K$ satisfies $(I_{0,n-2})$,  we have  $\dim G=n-1$. Moreover, for any proper face $G'$ of $G$, one has $\dim G'\leq n-2$ and hence by $(I_{0,n-2})$, $F\vee G'$ is proper. This shows that \eqref{eq:subface_property_G} holds. Note also that \eqref{eq:subface_property_F} holds trivially since $F$ has no proper faces.
We may therefore apply Lemma~\ref{lem:2cases}. 
Since  $F\vee G=K$, item (a) of Lemma~\ref{lem:2cases} cannot hold. 
Hence $F$ and $G$ are affinely independent and one has
$$
K = K\cap \aff(F,G) =\conv(F,G) = \conv (G,x).
$$
We thus conclude that $K$ is a pyramid with base $G$ and apex $x$.
In particular, every extreme point of $K$ other than $x$ belongs to $G$ and so $x$ does not lie in the affine hull of the other extreme points.

Since $x$ was arbitrary, we get that every finite set of extreme points of $K$ is affinely independent and consequently $K$ has at most $n+1$ extreme points. Since $K$ is full-dimensional and equal to the convex hull of its extreme points, we conclude that $K$ is an $n$-simplex.
\end{proof}

\subsection{Proof of Theorems \ref{thm:equivalent} and \ref{thm1.1}}
\begin{proof}[Proof of Theorem~\ref{thm:equivalent}]
First, assume that $K$ satisfies (b) or, in other words, the property $(I_{k-1,n-k-1})$. Applying Proposition~\ref{prop4.8} repeatedly, this implies that $K$ satisfies $(I_{0,n-2})$. 
Now Lemma~\ref{lem:induction_base} shows that $K$ is a simplex.

Conversely, assume that $K$ is a simplex and $F,G$ are faces of $K$ with $\dim F\leq k-1$ and $\dim G\leq n-k-1$. Since $F$ and $G$ are simplices themselves, they together contain at most $k+(n-k)=n$ vertices of $K$. Hence there is a vertex $y\in K$ not contained in $F\cup G$. Choose $H$ to be the facet of $K$ not containing $y$. Then $F\vee G\subseteq H$ and (b) follows. 

Next, translate $K$ so that it contains the origin in its interior and assume that $K^{\circ}$ satisfies (b). Consider $u, v\in\mathbb S^{n-1}$ such that $\dim T(K, u)\leq k$ and $\dim T(K, v)\leq n-k$. By Lemma \ref{lem:fact}, $F:=T(K, u)\cap\partial K^{\circ}$ and $G:=T(K, v)\cap\partial K^{\circ}$ are faces of $K^\circ$ satisfying $\dim F\leq k-1$ and $\dim G\leq n-k-1$. Therefore, the face $F\vee G$ is proper. Using Lemma  \ref{lem:fact}, we have $\mathbb R_{\geq0}(F\vee G)=T(K, w)$ for some $w\in\mathbb S^{n-1}$. The touching cone $T(K, w)$ clearly contains both $T(K, u)$ and $T(K, v)$, and we conclude that $K$ satisfies (c).

Conversely, assume that $K$ satisfies (c), and let $F, G$ be proper faces of $K^{\circ}$ such that $\dim F\leq k-1$ and $\dim G\leq n-k-1$. By Lemma \ref{lem:fact}, there exist $u, v\in\mathbb S^{n-1}$ with $\mathbb R_{\geq0}F=T(K, u)$ and $\mathbb R_{\geq0}G=T(K, v)$. Then $\dim T(K, u)\leq k$ and $\dim T(K, v)\leq n-k$, and hence condition (c) in turn ensures that there exists some $w\in\mathbb S^{n-1}$ such that $T(K, w)$ contains both $T(K, u)$ and $T(K, v)$. By Lemma \ref{lem:fact}, $T(K, w)\cap\partial K^{\circ}$ is a proper face of $K^\circ$. Since it clearly contains both $F$ and $G$, $F\vee G$ is proper. We conclude that $K^{\circ}$ satisfies (b). 

Finally, observing that $K\in\calK^n_o$ is a simplex if and only if $K^{\circ}$ is a simplex and using the previously shown equivalence (a)$\iff$(b), we have that $K$ satisfies (c) if and only if $K$ is a simplex. This finishes the proof.
\end{proof}

\begin{proof}[Proof of Theorem~\ref{thm1.1}]
This follows immediately by combining Proposition~\ref{prop1.3} with Theorem~\ref{thm:equivalent}.
\end{proof}

\section{Equality cases of the higher-order Godbersen inequality}
\label{sec6}
In this last section we prove Theorem~\ref{thm:higher_equivalent} by reducing the incidence condition for $p+1$ faces to the two-face condition of Theorem~\ref{thm:equivalent}. Combined with Proposition~\ref{prop1.6}, this completes the proof of equality characterization in Theorem~\ref{thm:higherGodbersen}.

The following proposition, which we will require for the reduction, is most probably known. However, since we have no suitable reference, we provide a proof here for completeness.

\begin{proposition}\label{pro:reduction_higher_godbersen}
    Let $K\in\calK^n$ be a full-dimensional convex body. Let $f,g \geq 1$ be integers such that $f+g \geq n+1$. Then there exist proper faces $F,G$ of $K$ such that 
    \[\dim F \leq f-1, \qquad \dim G \leq g-1, \qquad\text{and}\qquad F \vee G = K.\]
\end{proposition}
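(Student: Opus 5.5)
We prove the statement by induction on $n$, after first reducing to the extremal case $f+g=n+1$. Decreasing $f$ or $g$ only strengthens the conclusion, so we may assume $f+g=n+1$ with $1\le f,g\le n$.

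\emph{Case $f=1$, $g=n$.} Take an extreme point $x$ of $K$ and set $F=\{x\}$. Choose $p\in\interior K$ and let $b\in\partial K$ be the point with $p\in(b,x)$. Set $G:=F(K,b)$, which is proper. Then $F\vee G$ contains $x$ and $b$, hence contains $p\in\interior K$, so $F\vee G=K$. Also $\dim G\le n-1=g-1$. The symmetric case $g=1$ is identical.

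\emph{Case $2\le f,g$ (so $n\ge3$).} We reduce to a facet and apply induction there.
\begin{enumerate}
\item Pick an $(n-1)$-dimensional face $H$ of $K$. Such a face exists: the supporting hyperplane at any point where $\partial K$ is smooth gives a face, but that face need not be a facet of dimension $n-1$. This is the weak point, discussed below.
\item Apply the induction hypothesis in $\aff H\cong\RR^{n-1}$ with $f'=f-1$ and $g'=g$, so that $f'+g'=n$. This gives faces $F_1,G_1$ of $H$ with $\dim F_1\le f-2$, $\dim G_1\le g-1$, and $F_1\vee_H G_1=H$.
\item Choose an extreme point $x\notin H$ (a point off $\aff H$). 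Then set $F:=F_1\vee x$ and $G:=G_1$.
\end{enumerate}
The join $F\vee G$ contains $H$ and $x$, so it contains points of $\interior K$ and equals $K$. However, $\dim(F_1\vee x)$ is uncontrolled, so this route is delicate.

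Because of both problems above, we prefer a direct argument that generalizes the base case. Fix $x$ extreme and let $C$ be the cone of directions. We want a face $F\ni x$ with $\dim F\le f-1$ and a face $G$ with $\dim G\le g-1$ such that some point of $\relint F$ and some point of $\relint G$ have their open segment passing through $\interior K$. By Lemma~\ref{lemma2.2}, the join of $F(K,a)$ and $F(K,b)$ contains $(a,b)$, so it suffices to find a chord $[a,b]$ of $K$ through an interior point with $a$ being $(f-1)$-extreme and $b$ being $(g-1)$-extreme.

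To construct such a chord we use an affine-section argument. Choose a generic $(f-1)$-dimensional affine plane $L$ and intersect it with $K$. An extreme point $a$ of the section $K\cap L$ satisfies $\dim F(K,a)\le (n-1)-(f-1)$-ish bounds, which is the wrong direction. We therefore instead use projections. Let $\proj$ be the projection onto a generic $(n-f+1)=g$-dimensional subspace $E$.

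\begin{enumerate}
\item Take a boundary point $\bar b$ of $\proj K$ that is an extreme point of $\proj K$.
\item Its preimage $K\cap \proj^{-1}(\bar b)$ is a face $G_0$ of $K$ of dimension at most $n-g=f-1$.
\item Points $a$ in this fiber are $(f-1)$-extreme. Now pick an interior point $p$ and extend the chord from $a$ through $p$ to its other endpoint $b$.
\item We need $\dim F(K,b)\le g-1$. Choosing $a$ so that the direction is generic, and using that $\proj$ sends $b$ to a boundary point of $\proj K$ whose minimal face has dimension at most $g-1$, is where the argument needs care.
\end{enumerate}

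This last step, controlling the dimension of the face at the far endpoint $b$, is the main obstacle. The first thing to try is a dimension count. The set of $j$-extreme boundary points (those with $\dim F(K,\cdot)\ge j$) is covered by countably many pieces of dimension at most $n-1-j$, a classical Larman-type estimate, and a suitable two-sided version of this count gives a chord avoiding high-dimensional faces on both ends.
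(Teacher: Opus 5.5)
Your proposal has a genuine gap: the case $f,g\ge 2$ is never proved. You rightly abandon the facet induction, since $K$ may have no $(n-1)$-dimensional faces at all (e.g.\ a ball) and $\dim(F_1\vee x)$ is uncontrolled. Your chord reformulation is correct, but it is only an equivalent restatement: by Lemma~\ref{lemma2.2}, any $F,G$ as in the statement produce such a chord.

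The projection step does not control the far endpoint $b$. Since $\proj p$ lies in the interior of $\proj K$ and $\proj b$ lies on the ray from $\proj a$ beyond $\proj p$, the point $\proj b$ is in general interior to $\proj K$. This already happens for a ball in $\RR^3$ with $f=g=2$. Even if $\proj b$ lay in a face of $\proj K$ of dimension at most $g-1$, pulling back would only give the vacuous bound $\dim F(K,b)\le (g-1)+(f-1)=n-1$.

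The suggested Larman-type count is false as stated. For a polytope, the boundary points with $\dim F(K,\cdot)\ge n-1$ are the relative interiors of the facets, which make up all of $\partial K$ up to a null set. Conversely, when $f,g\le n-1$ the admissible endpoints lie in the $(f-1)$- and $(g-1)$-skeleta, which are null sets in $\partial K$. A generic chord therefore has both endpoints in facets, and upper bounds on the size of the bad sets cannot by themselves produce a chord with both endpoints in the good sets. The existence of such a chord is a structural fact about the face lattice.

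The missing idea is a mechanism that trades dimension from $G$ to $F$ while keeping $F\vee G=K$. The paper gets it from Lemma~\ref{lem:2cases}:
\begin{enumerate}
\item Start with $F=\{x_1\}$ (essentially your $f=1$ construction) and shrink $G$ so that the pair is minimal in the sense of \eqref{eq:subface_property_F}--\eqref{eq:subface_property_G}.
\item Since $F\vee G=K$, alternative (a) fails, so $F,G$ are affinely independent and $(F\vee y)\cap\aff(F,G)=\conv(F,y)$ for every extreme point $y$ of $G$.
\item Counting codimensions gives $\dim(F\vee y)\le \dim F+1+n-(\dim F+\dim G+1)=n-\dim G$.
\item If $\dim G\ge g$, take a maximal proper face $\widetilde G$ of $G$ and an extreme point $y\in G\setminus\widetilde G$. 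Then $(F\vee y)\vee\widetilde G=F\vee G=K$ and $\dim(F\vee y)\le n-g=f-1$.
\item Re-minimize inside $F\vee y$ and $\widetilde G$ to restore \eqref{eq:subface_property_F}--\eqref{eq:subface_property_G}. This strictly lowers $\dim G$, so iterating terminates with $\dim G\le g-1$.
\end{enumerate}
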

\begin{proof}
    Without loss of generality we may assume $f+g = n+1$. 
    We will inductively construct two sequences  $(F_j)$ and $(G_j)$, $j=0,1,\dots$, of proper faces of $K$ satisfying $F_j\vee G_j=K$, $\dim F_j\leq f-1$ and \eqref{eq:subface_property_F},  \eqref{eq:subface_property_G} from Lemma \ref{lem:2cases}.
    
First, we may always write $K = x_1 \vee \dots \vee x_m $ where $m$ is an integer and $x_1,\dots,x_m$ are extreme points of $K$. Let us fix such a representation of $K$ with minimal $m$. We let $F_0 := \{x_1\}$ and $G_0$  be a minimal face of $x_2 \vee \dots \vee x_m$ such that $F_0\vee G_0=K$. It is clear that $F_0,G_0$ are faces of $K$ with the required properties.

Second, assume that $F_j,G_j\subseteq K$ are as above. If $\dim G_j\leq g-1$, we set $F_{j+1}:=F_j$ and $G_{j+1}:=G_j$. If $\dim G_j\geq g$, we proceed as follows: Pick a maximal proper face $\widetilde G\subseteq G_j$. Then there exists an extreme point $y\in G_j$ not contained in $\widetilde G$. We set $\widetilde F:=F_j\vee y$. By Lemma \ref{lem:2cases}, $F_j$ and $G_j$ are affinely independent and satisfy \eqref{eq:convFy}. The former implies
$$
\dim\aff(F_j, G_j) = \dim F_j + \dim G_j + 1,
$$
the latter yields $\aff(\widetilde F\cap\aff(F_j, G_j))=\aff(F_j,y)$. Consequently,
\begin{align*}
    \dim \widetilde F&\leq\dim\aff(\widetilde F\cap\aff(F_j,G_j))+\codim\aff(F_j,G_j)\\
    &=\dim F_j+1+n-(\dim F_j+\dim G_j+1)\\
    &=n-\dim G_j\\
    &\leq f-1,
    \end{align*}
where in the last step we used our assumptions $f+g=n+1$ and $\dim G_j\geq g$. Using the maximality of $\widetilde G$, we further have
$$
\widetilde F\vee \widetilde G=F_j\vee y\vee \widetilde G=F_j\vee G_j=K.
$$
We choose faces $F_{j+1}\subseteq \widetilde F$ and $G_{j+1}\subseteq \widetilde G$ satisfying $F_{j+1}\vee G_{j+1}=K$ and minimizing $\dim F_{j+1}+\dim G_{j+1}$. Then we have $\dim F_{j+1}\leq \dim\widetilde F\leq f-1$, and the minimality of $\dim F_{j+1}+\dim G_{j+1}$ implies that $F_{j+1},G_{j+1}$ satisfy \eqref{eq:subface_property_F} and \eqref{eq:subface_property_G}. Note that $F_{j+1},G_{j+1}$ are clearly proper faces of $K$ since $F_j,G_j$ were such.

 Observe that, if $\dim G_j\geq g$, we have $\dim G_{j+1}<\dim G_j$ since $\widetilde G\subseteq G_j$ was proper. Therefore, there exists $j_0\geq0$ such that $\dim G_{j_0}\leq g-1$ and we thus may take $F:=F_{j_0}$ and $G:=G_{j_0}$ to finish the proof.
\end{proof}

It will be convenient to generalize Definition \ref{def:Ipq} to the higher-order setting.

\begin{definition}\label{def:higherIp}
Let $K\in\calK^n$ and fix integers $p\geq1$ and $q_0,\dots,q_p\geq0$.
We say that $K$ satisfies property $(I_{q_0,\dots, q_p})$ if for any faces $F_0,\dots,F_p$ of $K$ with $\dim F_i\leq q_i$, $i=0,\dots,p$, the face $F_0\vee\cdots\vee F_p$ is proper.
\end{definition}

\begin{lemma}
\label{lem:p_reduction}
Let $K\in\calK^n$, $p\geq2$, and $q_0,\dots,q_p\geq0$. If $K$ satisfies $(I_{q_0,\dots, q_p})$, then it satisfies $(I_{q_0,\dots,q_{p-2},q_{p-1}+q_p+1})$.
\end{lemma}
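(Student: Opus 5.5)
The plan is to take faces $F_0,\dots,F_{p-2}$ with $\dim F_i\leq q_i$ and a face $G$ with $\dim G\leq q_{p-1}+q_p+1$, and show that $F_0\vee\cdots\vee F_{p-2}\vee G$ is proper. The idea is to split $G$ into two faces of dimensions at most $q_{p-1}$ and $q_p$, and then apply $(I_{q_0,\dots,q_p})$ directly.

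First we dispose of the case where $G=K$. If $G=K$, then $\dim K=n\leq q_{p-1}+q_p+1$. In this case Proposition~\ref{pro:reduction_higher_godbersen}, applied with $f=q_{p-1}+1$ and $g=q_p+1$, so that $f+g\geq n+1$, produces proper faces $F,G'$ with $\dim F\leq q_{p-1}$, $\dim G'\leq q_p$ and $F\vee G'=K$. Taking arbitrary faces for the remaining indices (for instance single extreme points, which have dimension $0\leq q_i$), the join of the tuple is $K$. This contradicts $(I_{q_0,\dots,q_p})$. So the hypothesis forces $n\geq q_{p-1}+q_p+2$, and any face $G$ of dimension at most $q_{p-1}+q_p+1$ is automatically proper. (This step assumes $K$ is full-dimensional; if the paper intends general $K$, we work inside $\aff K$.)

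Next, for a proper face $G$ with $\dim G\leq q_{p-1}+q_p+1$, we apply Proposition~\ref{pro:reduction_higher_godbersen} to $G$ itself, viewed as a full-dimensional convex body in $\aff G$ of dimension $m=\dim G$. We use $f=q_{p-1}+1$ and $g=q_p+1$, so $f+g\geq m+1$. This yields faces $F_{p-1},F_p$ of $G$, hence faces of $K$, with $\dim F_{p-1}\leq q_{p-1}$, $\dim F_p\leq q_p$, and $F_{p-1}\vee F_p=G$. The join here is taken in $G$, but it coincides with the join in $K$, because faces of $K$ containing $F_{p-1}\cup F_p$ and contained in $G$ are faces of $G$, and the minimal face $F(K,F_{p-1}\cup F_p)$ lies in $G$.

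Then $F_0\vee\cdots\vee F_{p-2}\vee G=F_0\vee\cdots\vee F_p$, which is proper by $(I_{q_0,\dots,q_p})$.

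Some degenerate cases need separate treatment. If $\dim G=0$, or more generally if $\dim G\leq q_{p-1}$, no splitting is needed: take $F_{p-1}=G$ and $F_p=G$. Proposition~\ref{pro:reduction_higher_godbersen} as stated needs $n\geq 2$ and $f,g\geq1$ only in the splitting step, so in dimension $m\leq 1$ we handle $G$ by hand. For instance, a segment is the join of its two endpoints, which suffices since $q_{p-1},q_p\geq 0$.

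The main subtlety I expect is verifying that the proposition's conclusion transfers correctly from $G$ to $K$. This is exactly the compatibility of joins checked above, and it rests on the fact that faces of a face are faces.
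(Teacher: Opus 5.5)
Your proposal is correct and is essentially the paper's argument: apply Proposition~\ref{pro:reduction_higher_godbersen} to the large face inside its affine hull, write it as $F_{p-1}\vee F_p$ with $\dim F_{p-1}\le q_{p-1}$ and $\dim F_p\le q_p$, and then invoke $(I_{q_0,\dots,q_p})$. Your extra checks (the case $G=K$, joins in $G$ agreeing with joins in $K$, and the dimensions $0$ and $1$) fill in details the paper leaves implicit. There is one small slip: in the shortcut for $q_p<\dim G\le q_{p-1}$, the choice $F_p=G$ violates $\dim F_p\le q_p$. You should take $F_p$ to be an extreme point of $G$ instead, although your other steps already cover this case anyway.
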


\begin{proof}
   Assume that $K$ satisfies $(I_{q_0,\dots, q_p})$ and consider any faces $F_0,\dots,F_{p-1}\subseteq K$ with
    $$\dim F_i\leq q_i,\quad i=0,\dots,p-2,\qquad\text{and}\qquad \dim F_{p-1}\leq q_{p-1}+q_p+1.
    $$
    By Proposition \ref{pro:reduction_higher_godbersen}, there exist faces $F,G\subseteq F_{p-1}$ such that $\dim F\leq q_{p-1}$, $\dim G\leq q_p$, and $F\vee G=F_{p-1}$. By $(I_{q_0,\dots, q_p})$, $
    F_0\vee\cdots\vee F_{p-1}=F_0\vee\cdots\vee F_{p-2}\vee F\vee G
    $
    is a proper face of $K$, and hence $K$ satisfies $(I_{q_0,\dots,q_{p-2},q_{p-1}+q_p+1})$.
\end{proof}

\begin{proof}[Proof of Theorem \ref{thm:higher_equivalent}]
First, assume that $K$ satisfies (b) or, in other words, the property $(I_{k_0-1,\dots, k_p-1})$. Applying Lemma \ref{lem:p_reduction} repeatedly and using $\sum_{i=0}^pk_i=n$, this implies that $K$ satisfies $(I_{k_0-1,n-k_0-1})$. 
Now Theorem \ref{thm:equivalent} implies that $K$ is a simplex.

Conversely, assume that $K$ is a simplex and $F_0,\dots,F_p$ are faces of $K$ with $\dim F_i\leq k_i-1$, $i=0,\dots,p$. Since $F_0,\dots,F_p$ are simplices themselves, they together contain at most $\sum_{i=0}^p k_i=n$ vertices of $K$. Hence there is a vertex $y\in K$ not contained in any $F_i$, $i=0,\dots,p$. Choosing $F\subseteq K$ to be the facet not containing $y$, (b) follows. 

Finally, translate $K$ so that it contains the origin in its interior. Then a straightforward generalization of the corresponding part of the proof of Theorem~\ref{thm:equivalent}, that is, an easy application of Lemma \ref{lem:fact}, shows that $K$ satisfies (c) if and only if $K^\circ$ satisfies (b). By the previously shown equivalence (a)$\iff$(b), we thus have that $K$ satisfies (c) if and only if $K^\circ$ is a simplex. Using the self-duality property of a simplex, we conclude that (a)$\iff$(c). This finishes the proof.
\end{proof}

\begin{proof}[Proof of Theorem \ref{thm:higherGodbersen}]
    The inequality \eqref{eq:higher-godbersen} was proven in Proposition \ref{pro:higherGodbersen}. By Remark \ref{rem:reduction}, it remains to characterize the equality cases when $K$ is full-dimensional and $k_0, \dots, k_p\geq1$ (and hence $p\leq n-1$). In this case, it follows immediately from Proposition \ref{prop1.6} and Theorem \ref{thm:higher_equivalent} that $K$ attains equality in \eqref{eq:higher-godbersen} if and only if $K$ is a simplex.
\end{proof}

\bibliographystyle{amsplain}
\bibliography{bibliography}
\end{document}